\setlist[description]{leftmargin=8.5em,labelwidth=10em,nosep}
\newcommand{\bX}{\bm{X}}
\newcommand{\mw}{\bm{w}}
\newcommand{\bx}{\bm{x}}
\newcommand{\by}{\bm{y}}
\newcommand{\R}{\mathbb{R}}
\newcommand{\bZ}{\bm{Z}}
\newcommand{\bz}{\bm{z}}
\DeclareRobustCommand\widecheck[1]{{\mathpalette\@widecheck{#1}}}
\def\@widecheck#1#2{%
    \setbox\z@\hbox{\m@th$#1#2$}%
    \setbox\tw@\hbox{\m@th$#1%
       \widehat{%
          \vrule\@width\z@\@height\ht\z@
          \vrule\@height\z@\@width\wd\z@}$}%
    \dp\tw@-\ht\z@
    \@tempdima\ht\z@ \advance\@tempdima2\ht\tw@ \divide\@tempdima\thr@@
    \setbox\tw@\hbox{%
       \raise\@tempdima\hbox{\scalebox{1}[-1]{\lower\@tempdima\box
\tw@}}}%
    {\ooalign{\box\tw@ \cr \box\z@}}}
\newcommand\Var{\mathrm{Var}}
\newcommand\E{\mathrm{E}}
\newcommand\ds{\mathrm{d}}
\newcommand\m{\mathcal{M}}
\newcommand\graph{\mathcal{G}_n}
\newcommand\X{\bX}
\newcommand{\mres}{\mathbin{\vrule height 1.6ex depth 0pt width
0.13ex\vrule height 0.13ex depth 0pt width 1.3ex}}
\newcommand{\ko}{\mathfrak{o}}
\newcommand{\kq}{\mathfrak{q}}
\newtheorem{lemma}{\bf Lemma}[section]
\newtheorem{assumption}{\bf Assumption}[section]
\newtheorem{theorem}{\bf Theorem}[section]
\newtheorem{definition}{\bf Definition}[section]
\newtheorem{remark}{Remark}[section]
\numberwithin{equation}{section}
\begin{document}

\setlength{\abovedisplayskip}{5pt}
\setlength{\belowdisplayskip}{5pt}
\setlength{\abovedisplayshortskip}{5pt}
\setlength{\belowdisplayshortskip}{5pt}
\hypersetup{colorlinks,breaklinks,urlcolor=blue,linkcolor=blue}

\title{Azadkia-Chatterjee's correlation coefficient adapts to manifold data}

\author{
Fang Han\thanks{Department of Statistics, University of Washington, Seattle, WA 98195, USA; e-mail: {\tt fanghan@uw.edu}} ~~and~ Zhihan Huang\thanks{School of Mathematical Science, Peking University, Beijing, China; e-mail: {\tt 1900010737@pku.edu.cn}}
}

\date{}

\maketitle

\vspace{-1em}

\begin{abstract} 
In their seminal work, \cite{azadkia2019simple} initiated graph-based methods for measuring variable dependence strength. By appealing to nearest neighbor graphs, they gave an elegant solution to a problem of R\'enyi \citep{renyi1959measures}. Their idea was later developed in \cite{deb2020kernel} and the authors there proved that, quite interestingly, Azadkia and Chatterjee's correlation coefficient can automatically adapt to the manifold structure of the data. This paper furthers their study in terms of calculating the statistic's limiting variance under independence and showing that it only depends on the manifold dimension.
\end{abstract}

{\bf Keywords:} manifold, graph-based methods, dependence measure, nearest neighbor graphs,  adaptivity.

\section{Introduction}

Consider $\bX \in  \mathbb{R}^d$ and $Y\in\mathbb{R}$ to be two random variables defined over the same probability space with fixed and continuous joint distribution function $F_{\bX,Y}$ and marginal distributions $F_{\bX}$ and $F_Y$, respectively. Let $(\bX_1,Y_1),\ldots,(\bX_n,Y_n)$ be $n$ independent copies of $(\bX,Y)$, $R_i$ be the rank of $Y_i$ among $\{Y_1, \ldots, Y_n\}$, and $N(i)$ index the nearest neighbor (NN) of $\bX_i$ among $\{\bX_1, \ldots, \bX_n\}$. Built on an earlier work of \cite{chatterjee2020new}, \cite{azadkia2019simple}  introduced  the following graph-based correlation coefficient
\[
\xi_n =\xi_n(\{(\bX_i,Y_i)\}_{i=1}^n):= \frac{6}{n^2-1} \sum_{i=1}^n \min\Big\{R_i,R_{N(i)}\Big\}-\frac{2n+1}{n-1}.
\]
This correlation coefficient was shown in \citet[Theorem 2.2]{azadkia2019simple} to converge strongly to a population dependence measure that was first introduced in \cite{dette2013copula},
\begin{align*}
    \xi=\xi(\X,Y):=\frac{\displaystyle\int\Var\big\{\E\big[\mathds{1}\big(Y\geq t\big)|\X\big]\big\}\ds F_{Y}(t)}{\displaystyle\int\Var\big\{\mathds{1}\big(Y\geq t\big)\big\}\ds F_{Y}(t)}.
\end{align*}
\cite{dette2013copula}'s dependence measure satisfies some of the most desirable properties discussed in \cite{renyi1959measures} including, in particular, the following three:
\begin{itemize}
\item[(1)] $\xi\in [0,1]$;
\item[(2)] $\xi=0$ if and only if $\bX$ is independent of $Y$;
\item[(3)] $\xi=1$ if and only if $Y$ is a measurable function of $\bX$ almost surely. 
\end{itemize}
Azadkia and Chatterjee thus outlined an elegant approach to measuring the dependence strength between $\bX$ and $Y$, resolving many long-standing issues that surround R\'enyi's criteria as were recently discussed by Professor Peter Bickel \citep{bickel2022measures}.  

The authors of this paper are interested in $\xi_n$'s adaptivity to the manifold structure of the data, a problem that has received much interest in the NN literature \citep{levina2004maximum,kpotufe2011k,kpotufe2013adaptivity}. To this end, our focus is on the limiting null distribution of $\xi_n$, i.e., its limiting distribution under independence between $\bX$ and $Y$. Such a result, if derived, would immediately give rise to a statistical test of the following null hypothesis, 
\[
H_0:~~\bX \text{ (supported on a smooth manifold) is independent of }Y.
\]

Below is the main result of this paper.

\begin{theorem}[Central limit theorem of $\xi_n$ for manifold data]\label{thm:main}
Let $Y \in \mathbb{R}$ be independent of $\X\in\mathbb{R}^d$ and let $F_{\bX,Y}$ be fixed and continuous. Further assume that the following two conditions hold: 
\begin{enumerate}[itemsep=-.5ex,label=(\roman*)]
\item\label{assump:manifold} $\X\in \m$, where $\m$ is an $m$-dimensional $C^{\infty}$ manifold in $\mathbb{R}^d$ with $m\leq d$;
\item\label{assump:measure} the law of $\X$ is absolutely continuous with respect to $\mathcal{H}^m\mres\m$, the restricted $m$-dimensional Hausdorff measure in $\mathbb{R}^d$ on $\m$.
\end{enumerate}
We then have, as $n\rightarrow \infty$,
    \begin{align*}
        \sqrt{n}\xi_n \text{ converges to } N\Big(0,\frac{2}{5}+\frac{2}{5}\mathfrak{q}_{m}+\frac{4}{5}\mathfrak{o}_{m}\Big) \text{ in distribution},
    \end{align*}
    where for any integer $m\geq 1$,
    \begin{align*}
&\kq_m:=\Big\{2-I_{3/4}\Big(\frac{m+1}{2},\frac{1}{2}\Big)\Big\}^{-1},
~~~~
I_{x}(a,b):=\frac{\int_{0}^{x}t^{a-1}(1-t)^{b-1} \ds t}
                 {\int_{0}^{1}t^{a-1}(1-t)^{b-1} \ds t},\\
&\ko_{m}:=\iint_{\Gamma_{m;2}}\exp\Big[-\lambda\Big\{B(\mw_1,\lVert\mw_1\rVert_{})\cup B(\mw_2,\lVert\mw_2\rVert_{})\Big\}\Big]\ds(\mw_1,\mw_2),\\
&\Gamma_{m;2}:=\Big\{(\mw_1,\mw_2)\in(\R^m)^2: \max(\lVert\mw_1\rVert_{},\lVert\mw_2\rVert_{})<\lVert\mw_1-\mw_2\rVert_{}\Big\},
\end{align*} 
$B(\mw_1,r)$ represents the ball of radius $r$ and center $\mw_1$, $\|\cdot\|$ is the Euclidean norm, and $\lambda(\cdot)$ is the Lebesgue measure. 
\end{theorem}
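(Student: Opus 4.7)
The plan is to condition on the covariate sample $\{\bX_i\}$ so as to reduce the analysis to a graph-indexed rank statistic, and then to take expectation using local manifold geometry. Since $\xi_n$ depends on $Y_1,\ldots,Y_n$ only through their ranks, we may assume $Y_i$ are iid Uniform$(0,1)$. The key identity $\min\{R_i,R_{N(i)}\} = \sum_k \mathbbm{1}\{Y_k\le Y_i,\, Y_k\le Y_{N(i)}\}$ recasts the numerator of $\xi_n$ as
\[
T_n := \sum_{i=1}^n \min\{R_i,R_{N(i)}\} \;=\; \sum_{i,k} \mathbbm{1}\{Y_k\le Y_i,\, Y_k \le Y_{N(i)}\},
\]
exhibiting $\xi_n$ (up to a deterministic rescaling) as a graph-indexed U-statistic of degree three in the iid uniforms $Y_l$.

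Conditioning on $\{\bX_i\}$ fixes the NN graph, and the conditional mean $\E[T_n\mid\bX]=n(n+1)/3$ is a constant independent of the covariates. I would next perform a Hoeffding decomposition of $T_n-\E[T_n\mid\bX]$ onto the iid $Y_l$'s. Classifying the three roles in which an index $l$ can appear in a summand (as the threshold $k$, as the outer index $i$, or as a pre-image $i'$ with $N(i')=l$) produces a first-order projection whose conditional variance is proportional at leading order to $S_n:=\sum_j d_j^-(d_j^--1)$ with $d_j^-=|N^{-1}(j)|$; the remaining degenerate pieces of $T_n$ contribute variance indexed by the mutual-NN count $D_n:=\#\{i<j:N(i)=j,\,N(j)=i\}$, plus a deterministic baseline. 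After the book-keeping and the standard rank-vs-uniform correction through the empirical process, this produces the scaling identity
\[
\Var(\sqrt n\,\xi_n\mid\bX) \;=\; \frac{2}{5} + \frac{2}{5}\cdot\frac{2D_n}{n} + \frac{4}{5}\cdot\frac{S_n}{n} + o_P(1).
\]

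The manifold assumption enters through the asymptotics of $2D_n/n$ and $S_n/n$. Zooming in at a typical sample point $\bX_i\in\m$ by the factor $n^{1/m}$, the rescaled point cloud converges in distribution (for almost every $\bX_i$) to a homogeneous Poisson point process on the tangent space $T_{\bX_i}\m\cong\R^m$, a consequence of Assumption \ref{assump:measure} together with Besicovitch's differentiation theorem; the Poisson intensity depends on the local density of $\bX$ but cancels out of the dimensionless limits below. Under this Poisson limit, $P(N(N(i))=i)\to\kq_m$, namely the closed-form mutual-NN probability for $\R^m$ recorded in the theorem; and by Slivnyak's formula, $\E[d_i^-(d_i^--1)]$ tends to the Palm-theoretic integral $\ko_m$, i.e.\ the expected number of ordered point-pairs whose common nearest neighbor is the origin of a Poisson process. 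The Stone/kissing-number bound (which requires only bounded doubling dimension) supplies bounded in-degrees and lifts these single-point convergences to the in-probability statements $2D_n/n\to\kq_m$ and $S_n/n\to\ko_m$.

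Finally, I would upgrade the conditional variance identity to a CLT via Stein's method on the dependency graph of the Hoeffding summands: two summands $g_l(Y_l)$ become conditionally independent once their index neighborhoods in the NN graph are disjoint, and the kissing-number bound ensures the dependency graph has bounded maximum degree. The main obstacle is the passage from the manifold to its tangent space at the level of the second-moment quantity $\ko_m$: the first-moment reduction already in \cite{deb2020kernel} must be strengthened to handle two in-neighbors of a single $\bX_i$ simultaneously, which requires that the manifold curvature at scale $n^{-1/m}$ be genuinely negligible against the NN distance (itself of that same order) and that low-density regions of $\m$ be removed by a careful truncation. Once this delicate Poisson-approximation step is in hand, the CLT machinery is standard.
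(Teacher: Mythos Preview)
Your proposal is correct and isolates the same two NNG quantities (mutual-NN pairs giving $\kq_m$, shared-NN triples giving $\ko_m$) that determine the limiting variance, but the execution differs from the paper's in two respects. First, for the CLT itself the paper does not run Stein's method: it invokes \cite{lin2022limit} as a black box to obtain $(\xi_n-\E\xi_n)/\sqrt{\Var(\xi_n)}\Rightarrow N(0,1)$ under only the continuity hypothesis, so the whole task collapses to computing $\lim_n n\Var(\xi_n)$; your dependency-graph Stein route is more self-contained but redoes that work. Second, for the variance the paper replaces $\xi_n$ by a H\'ajek-type intermediate statistic $\widecheck\xi_n$ (also borrowed from \cite{lin2022limit}) rather than conditioning on $\bX$ and Hoeffding-decomposing, and then only needs the \emph{expectations} $\E[2D_n/n]\to\kq_m$ and $\E[S_n/n]\to\ko_m$; these are established not via a Poisson-zoom argument but by direct analytic upper and lower bounds in a single coordinate chart, linearizing $\mu$ near each point through the Lebesgue differentiation theorem and integrating explicitly. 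Your Poisson-limit approach is conceptually cleaner and delivers the stronger in-probability convergence, at the cost of the curvature-uniformity issue you correctly flag; the paper's chart-and-LDT computation sidesteps that difficulty by working with expectations throughout and closing via dominated convergence with the kissing-number bound.
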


\begin{remark}
In Theorem \ref{thm:main}, we assume a constant global dimension of $\m$. When the data structure is more complex,  the dimension may differ between different (connected or not) components of $\m$. In such cases, the value of $\xi_n$'s asymptotic variance can be derived analogously as a mixture distribution of each part corresponding to one component of $\m$. 
\end{remark}

As we will explain later in Section \ref{sec:sketch}, the terms $\kq_m$ and $\ko_m$ count the averaged numbers of nearest neighbor pairs and triples, respectively. The first ten $\kq_m$ and $\ko_m$ were shown in Table \ref{tab:kq} and some basic properties are listed below.

\begin{lemma}\label{cons}
The following holds true:
\begin{itemize}
\item[(a)] $\kq_m\in\big(\frac{1}{2},\frac{2}{3}\big]$ is strictly decreasing as $m$ increases;
\item[(b)] $\sup_{m}\ko_m<2$ and $\limsup_m\ko_m\leq 1$.
\end{itemize}
\end{lemma}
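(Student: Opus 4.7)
For \textbf{part (a)}, my plan is to recast $I_{3/4}((m+1)/2, 1/2)$ as a geometric probability on the sphere. Combining the reflection identity $I_x(a,b) = 1 - I_{1-x}(b,a)$ with the classical fact that for $\mathbf{v}$ uniform on $S^{m+1} \subset \R^{m+2}$ one has $v_1^2 \sim \mathrm{Beta}(1/2, (m+1)/2)$ gives
\[
I_{3/4}\Big(\frac{m+1}{2}, \frac{1}{2}\Big) = P\big(|v_1| > 1/2\big).
\]
Using the Gaussian quotient representation $v_1^2 = g_1^2/\sum_{i=1}^{m+2} g_i^2$ with i.i.d.\ $N(0,1)$ variables, appending one further $g_{m+3}^2$ to the denominator strictly stochastically decreases $v_1^2$, so $P(|v_1|>1/2)$ is strictly decreasing in $m$; hence $\kq_m = [2 - I_{3/4}((m+1)/2, 1/2)]^{-1}$ is strictly decreasing. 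A direct evaluation gives $I_{3/4}(1, 1/2) = 1 - \sqrt{1/4} = 1/2$, so $\kq_1 = 2/3$, while Gaussian concentration forces $v_1 \to 0$ in probability as $m \to \infty$ and hence $\kq_m \downarrow 1/2$. Combined, $\kq_m \in (1/2, 2/3]$.

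For the first bound in \textbf{part (b)}, the central geometric observation is that within $\Gamma_{m;2}$, at least half of the smaller ball lies outside the larger. Writing $r_i = \lVert \mw_i \rVert$, $u_i = v_m r_i^m$, and assuming WLOG $r_1 \leq r_2$, the hyperplane through $\mw_1$ orthogonal to $\mw_2 - \mw_1$ bisects $B(\mw_1, r_1)$, and every point $p$ on the side away from $\mw_2$ satisfies
\[
\lVert p - \mw_2 \rVert^2 = \lVert p - \mw_1 \rVert^2 + \lVert \mw_1 - \mw_2 \rVert^2 + 2 (p - \mw_1)\cdot(\mw_1 - \mw_2) \geq \lVert \mw_1 - \mw_2 \rVert^2 > r_2^2,
\]
placing $p$ outside $B(\mw_2, r_2)$. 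Consequently $\lambda\big(B(\mw_1, r_1) \cup B(\mw_2, r_2)\big) \geq \max(u_1, u_2) + \min(u_1, u_2)/2$. The radial change of variable $du_i = m v_m r_i^{m-1} dr_i$ turns the radial marginal into Lebesgue on $(0,\infty)$, and dropping the $\Gamma_{m;2}$ restriction yields
\[
\ko_m \leq \int_0^\infty \int_0^\infty e^{-\max(u_1, u_2) - \min(u_1, u_2)/2}\, du_1 du_2 = \tfrac{4}{3} < 2,
\]
uniformly in $m$.

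For $\limsup_m \ko_m \leq 1$, I would use dominated convergence with the above bound as dominator. Parametrizing $\mw_i$ by $(u_i, \tilde\sigma_i)$ with $\tilde\sigma_i$ uniform on $S^{m-1}$, one has $\ko_m = \int_0^\infty \int_0^\infty \E\big[\mathds{1}_{\Gamma_{m;2}} e^{-\lambda(B_1 \cup B_2)}\big]\, du_1 du_2$, and it suffices to show the inner expectation converges pointwise to $e^{-u_1 - u_2}$ for every fixed $(u_1, u_2) \in (0,\infty)^2$. Two high-dimensional effects drive this: since $r_1/r_2 = (u_1/u_2)^{1/m} \to 1$ while $\cos\angle(\tilde\sigma_1, \tilde\sigma_2) = O_p(m^{-1/2})$ by the classical concentration of Euclidean inner products on $S^{m-1}$, the angular condition defining $\Gamma_{m;2}$ (equivalent to $\cos\angle < \min_i r_i/(2\max_i r_i) \to 1/2$) has probability tending to $1$; meanwhile, the spherical-cap identity decomposes $\lambda(B_1 \cap B_2)$ into two caps of volume $(v_m r_i^m / 2)\, I_{x_m^{(i)}}((m+1)/2, 1/2)$ whose arguments satisfy $x_m^{(i)} \to 1/2$ in the limit $r_1/r_2 \to 1$, $\cos\angle \to 0$, so by the same Beta-concentration used in (a) each cap volume vanishes and $\lambda(B_1 \cup B_2) \to u_1 + u_2$ in probability. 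Dominated convergence then gives $\limsup_m \ko_m \leq \int_0^\infty\int_0^\infty e^{-u_1 - u_2}\, du_1 du_2 = 1$.

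\textbf{Main obstacle.} The subtlest step is translating the Beta concentration of (a) into uniform control of the cap-volume decay across the full joint distribution of $(r_1/r_2, \cos\angle)$, so that the $m \to \infty$ pointwise convergence of the inner expectation holds (not just a marginal-by-marginal argument), and dominated convergence can be safely invoked.
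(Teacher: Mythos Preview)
Your proposal is correct and arrives at the same conclusions, but by a genuinely different route from the paper on every item.

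For part (a), the paper argues directly on the incomplete beta integral: writing $(I_x(a,b))^{-1}-1$ as a ratio of integrals over $[x,1]$ and $[0,x]$ and comparing the effect of multiplying the integrand by $t^\epsilon$ shows $I_x(a+\epsilon,b)<I_x(a,b)$, and the limit $\kq_\infty=1/2$ comes from an elementary tail bound on $\int_0^x t^{a-1}(1-t)^{b-1}\,\ds t$. Your probabilistic recasting via $v_1^2\sim\mathrm{Beta}(1/2,(m+1)/2)$ for a uniform point on $S^{m+1}$, together with the Gaussian quotient representation, is cleaner conceptually: strict stochastic monotonicity in $m$ is immediate, and the limit follows from the law of large numbers rather than an ad hoc estimate.

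For $\sup_m\ko_m<2$, the paper simply drops the smaller ball from the union and computes the resulting integral to get the bound $2$. Your hyperplane-bisection observation, that on $\Gamma_{m;2}$ at least half of $B(\mw_1,\lVert\mw_1\rVert)$ lies outside $B(\mw_2,\lVert\mw_2\rVert)$, gives $\lambda(B_1\cup B_2)\ge \max(u_1,u_2)+\tfrac12\min(u_1,u_2)$ and hence the sharper uniform bound $\ko_m\le 4/3$; as a bonus this inequality supplies an $m$-free integrable dominator for the next step.

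For $\limsup_m\ko_m\le 1$, the paper uses a single geometric inequality: on $\Gamma_{m;2}^*=\{\lVert\mw_1\rVert>\lVert\mw_2\rVert\}$ one has $\lambda(B_1\cap B_2)\le \lambda\big(B(\mw_1,\lVert\mw_1\rVert)\cap B(\bm 0,\lVert\mw_1\rVert)\big)=\epsilon_m\,\lambda(B_1)$ with $\epsilon_m=2-1/\kq_m\to 0$, and then evaluates the majorizing integral in closed form as $2\big((1-\epsilon_m)^{-1}-\tfrac12(1-\epsilon_m)^{-1/2}\big)\to 1$. Your argument instead fixes $(u_1,u_2)$, uses spherical concentration ($\cos\angle(\tilde\sigma_1,\tilde\sigma_2)=O_p(m^{-1/2})$, $r_1/r_2\to 1$) to show both that $P(\Gamma_{m;2})\to 1$ and that the two cap volumes in $\lambda(B_1\cap B_2)$ vanish, and then appeals to dominated convergence with the $4/3$-dominator. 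This is less computational but requires tracking the joint behaviour of the angular and radial variables; the ``main obstacle'' you flag is handled by noting that once $x_m^{(i)}\le 3/4$ (which occurs with probability $\to 1$), the cap fraction is bounded by $I_{3/4}((m+1)/2,1/2)\to 0$, and then bounded convergence over the angular variables followed by DCT over $(u_1,u_2)$ suffices. In fact your argument yields the stronger conclusion $\lim_m\ko_m=1$.
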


\begin{table}[t]
\centering
\setlength{\tabcolsep}{3mm}{ 
\caption{The first 10 values of $\kq_m$ and $\ko_m$.}
\label{tab:kq}
\begin{tabular}{ccccccccccc}
\hline
$m$ & 1 & 2 & 3 & 4 & 5 &6 &7 & 8 & 9 & 10 \\ \hline
$\mathfrak{q}_m$  & 0.67& 0.62 & 0.59 & 0.57 & 0.56 & 0.55 & 0.54 & 0.53 & 0.53 & 0.52 \\
$\mathfrak{o}_m$  & 0.49 & 0.63 & 0.71 & 0.76 & 0.79 & 0.84 & 0.86 & 0.90 & 0.98 & 1.00 \\ \hline
\end{tabular}}
\end{table}

\subsection{Literature review}

Theorem \ref{thm:main} builds bridge between two statistical fields, the study of graph-based correlation coefficients and the study of nearest neighbor methods and their adaptivity to manifold data. 

On one hand, since the pioneering work of  \cite{chatterjee2020new} and \cite{azadkia2019simple}, the study of graph-based correlation coefficients has quickly attracted attention; a literature is being built up rapidly and includes, among many others, \cite{cao2020correlations}, \cite{shi2020power}, \cite{gamboa2020global}, \cite{deb2020kernel}, \cite{huang2020kernel}, \cite{auddy2021exact}, \cite{shi2021ac}, \cite{lin2021boosting}, \cite{fuchs2021bivariate}, \cite{azadkia2021fast}, \cite{griessenberger2022multivariate}, \cite{strothmann2022rearranged}, \cite{lin2022limit}, \cite{zhang2022asymptotic}, \cite{bickel2022measures}, and \cite{chatterjee2022estimating}. 

In the following we list three existing results that are most relevant to Theorem \ref{thm:main}. Readers of more interest are referred to \cite{han2021extensions} and \citet[Section 1.1]{lin2022limit} for a slightly more complete review. 
\begin{enumerate}[itemsep=-.5ex,label=(\arabic*)]
\item \cite{deb2020kernel} studied a general class of graph-based correlation coefficients, to which $\xi_n$ belongs to. Their Corollary 5.1 examined the {\it convergence rate} for $\xi_n$ to $\xi$, illustrating an interesting interplay between the intrinsic dimension of $\bX$ and the smoothness of some conditional expectation functions relating $Y$ to $\bX$. They revealed, for the first time, the adaptation of graph-based correlation coefficients to the manifold structure of $\bX$. 
\item Built on the work of \cite{deb2020kernel}, \citet[Theorem 3.1(ii)]{shi2021ac} established a {\it central limit theorem (CLT)} for $\xi_n$ under (a) independence between $Y$ and $\bX$; (b) {\it absolute continuity} (with respect to the Lebesgue measure) of $F_{\bX}$. Under these conditions, they showed
\[
        \sqrt{n}\xi_n \text{ converges to } N\Big(0,\frac{2}{5}+\frac{2}{5}\mathfrak{q}_{d}+\frac{4}{5}\mathfrak{o}_{d}\Big) \text{ in distribution}.
\]
\item In a more recent preprint, \citet[Theorem 1.1]{lin2022limit} established a CLT for $\xi_n$ while removing both independence and absolute continuity assumptions required in \cite{shi2021ac}. In particular, they showed that as long as (a) $F_{\bX, Y}$ is fixed and continuous and (b) $Y$ is not almost surely a measurable function of $\bX$, it holds true that
\[
(\xi_n-\E\xi_n)/\sqrt{\Var(\xi_n)} \text{ converges to } N(0,1) \text{ in distribution}.
\]
\end{enumerate}
Theorem \ref{thm:main} can thus be viewed as an descendent of the above three results: 
\begin{enumerate}[itemsep=-.5ex,label=(\arabic*)]
\item compared to \cite{deb2020kernel}, it established a weak convergence instead of a point estimation type result; 
\item compared to \cite{shi2021ac}, it removed the absolute continuity assumption required therein; 
\item compared to \citet[Theorem 1.1]{lin2022limit}, Theorem \ref{thm:main} calculated the explicit value of the asymptotic variance.  
\end{enumerate}

On the other hand, in practice many data are believed to be structured, i.e., they are embedded in a space that is of a much higher dimension than necessary \citep{levina2004maximum,amelunxen2014living}. Local methods, especially the NN-based ones, are long time believed to be suitable for analyzing such data, capably of automatically adapting to the data structure \citep{clarkson2006nearest,kpotufe2011k,kpotufe2013adaptivity,kpotufe2017lipschitz}. We believe Theorem \ref{thm:main} also bears potential to contribute to this line of the research. In particular, 
\begin{enumerate}[itemsep=-.5ex,label=(\arabic*)]
\item as we shall show in Section \ref{sec:sketch}, an essence of Theorem \ref{thm:main} is to calculate the 
averaged numbers of nearest neighbor pairs and triples; they are thus monitoring the stochastic structure of an NN graph (NNG) when the data are distributed over a manifold.
\item Theorem \ref{thm:main} is also, to our knowledge, the {\it first} weak convergence type results for tracking the statistical behavior of a NN-based functional over a manifold-supported probability space.
\end{enumerate}

\subsection{Proof sketch}\label{sec:sketch}

We first introduce some auxiliary results on the NNGs and the manifold. 
Recall that $[\textsl{\textbf{X}}_i]_{i=1}^n$ comprise $n$ independent copies of a random vector $\textsl{\textbf{X}} \in \mathbb{R}^d$ from an unknown distribution function $F_{\bX}$. Let $\graph$ be the associated directed NNG with vertex set $\{1,...,n\}$ and edge set $\mathcal{E}(\graph)$; here an edge $\{i \rightarrow j\}\in \mathcal{E}(\graph)$ means $\bX_j$ is the NN of $\bX_i$. 

We are interested in manifold data; more precisely, we are interested in such random vector $\textsl{\textbf{X}}$ that is supported on $\m$, a smooth submanifold of $\mathbb{R}^d$ with manifold dimension $m \leq d$. The following concepts are from \cite{lee2013smooth}.

\begin{definition}
Let $\m$ be an $m$-dimensional smooth manifold. A {\it coordinate chart}, abbreviated as a {\it chart}, on $\m$ is a pair $(U,\psi)$, where $U$ is an open set of $\m$ and $\psi:U\rightarrow V$ is a homeomorphism from $U$ to an open subset $V=\psi(U)\subset \mathbb{R}^m$.
\end{definition}

\begin{definition}
    Given a smooth manifold $\m$ and a chart $(U,\psi)$ of $\m$, $U$ is called a {\it coordinate neighborhood} of each point $w \in U$.
\end{definition}

Concerning any point $\bx\in\m$, one can find a chart of $\m$ with coordinate neighborhood $U(=U_{\bx})$ and corresponding homeomorphism $\psi(=\psi_{\bx})$.\footnote{There  are, of course, many such neighborhoods and homeomorphisms circling $\bx$; in the sequel we simply pick one of them.} With this notion, we give an assumption on the distribution of $\textsl{\textbf{X}}$ that will be shown to be an alternative to Theorem \ref{thm:main}\ref{assump:measure}. In the following, the law of $\textsl{\textbf{X}}$ is denoted by $\mu$ and the restriction of $\mu$ to a set $U$ is denoted by $\mu_{U}$.

\begin{assumption}\label{measure assump}
    \emph{(Distribution assumption).} The positive measure $\mu$ satisfies the following condition: for any point $\bx\in\m$ and any chart $(U,\psi)$ such that 
    $U$ is a coordinate neighborhood of $\bx$ and $\psi:U\rightarrow V \subset \mathbb{R}^m$, the restricted pushforward measure $\psi_{*}\mu_{U}$ is absolutely continuous with respect to the Lebesgue measure $\lambda(\cdot)$ on $V$. 
\end{assumption}

Assumption \ref{measure assump} yields an alternate description of the data generating process and will appear to be useful in the following proofs; 
the next lemma shows that it is equivalent to the assumption of Theorem \ref{thm:main}\ref{assump:measure}.

\begin{lemma}\label{al measure assumption}
    \emph{(Alternative distribution assumption).} As $\m$ satisfies the assumption of Theorem \ref{thm:main}\ref{assump:manifold}, Assumption \ref{measure assump} is equivalent to the assumption of Theorem \ref{thm:main}\ref{assump:measure}.
\end{lemma}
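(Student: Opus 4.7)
The approach is to use the smooth area formula to realize the restricted Hausdorff measure $\mathcal{H}^{m}\mres\m$ as a locally smooth multiple of the pushforward of Lebesgue measure, and then to reduce the global equivalence to a purely local one via a countable cover of $\m$. Throughout I interpret the chart $(U,\psi)$ in Assumption \ref{measure assump} as belonging to the $C^{\infty}$ atlas of $\m$, so that $\psi^{-1}:V\rightarrow\R^d$ is a smooth embedding.

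First I would record the key local identity: since $\m$ is a $C^{\infty}$ $m$-dimensional submanifold of $\R^d$, every point $\bx\in\m$ admits a smooth chart $(U,\psi)$ for which $\psi^{-1}:V\rightarrow U\subset\R^d$ is a smooth embedding. The area formula for smooth immersions then gives, for every Borel set $A\subset U$,
\begin{equation*}
\mathcal{H}^{m}(A)=\int_{\psi(A)} J(v)\,\ds\lambda(v),\qquad J(v):=\sqrt{\det\bigl((D\psi^{-1}(v))^{\top}D\psi^{-1}(v)\bigr)},
\end{equation*}
with $J$ continuous and strictly positive on $V$ (positivity follows from $D\psi^{-1}$ having full column rank). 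Consequently, on $U$, the measures $\mathcal{H}^{m}\mres U$ and $(\psi^{-1})_{*}\lambda_{V}$ are mutually absolutely continuous.

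The forward direction (Theorem \ref{thm:main}\ref{assump:measure} $\Rightarrow$ Assumption \ref{measure assump}) is then immediate: if $\mu\ll\mathcal{H}^{m}\mres\m$, one has $\mu_{U}\ll\mathcal{H}^{m}\mres U$, and since pushforward preserves absolute continuity, $\psi_{*}\mu_{U}\ll\psi_{*}(\mathcal{H}^{m}\mres U)=J\,\lambda_{V}\ll\lambda_{V}$. For the reverse direction, I would invoke second countability of $\m$ (automatic for an embedded $C^{\infty}$ submanifold of $\R^d$) to obtain a countable smooth cover $\{U_{i}\}$ with charts $\psi_{i}$. Given a Borel $E\subset\m$ with $\mathcal{H}^{m}(E)=0$, each $\mathcal{H}^{m}(E\cap U_{i})=0$, and the local area formula together with $J>0$ forces $\lambda(\psi_{i}(E\cap U_{i}))=0$; Assumption \ref{measure assump} then yields $\mu(E\cap U_{i})=(\psi_{i})_{*}\mu_{U_{i}}(\psi_{i}(E\cap U_{i}))=0$, and a countable union gives $\mu(E)=0$.

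The main technical input is the area formula together with strict positivity of $J$ on every chart; these are standard consequences of the $C^{\infty}$ submanifold hypothesis, which guarantees that $\psi^{-1}$ is a full-rank immersion. Once this local equivalence of $\mathcal{H}^{m}\mres U$ and $(\psi^{-1})_{*}\lambda_{V}$ is in place, the remainder of the argument is a routine measure-theoretic patching through a countable cover and therefore carries no substantive difficulty.
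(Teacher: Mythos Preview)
Your proposal is correct and follows essentially the same route as the paper: both arguments localize to a chart, invoke the area formula for the smooth embedding $\psi^{-1}$, and use strict positivity of the Jacobian $J$ to conclude that $\mathcal{H}^{m}$-null sets on $U$ correspond exactly to Lebesgue-null sets on $V$. The paper condenses the equivalence into the single biconditional $\mathcal{H}^{m}(\psi^{-1}(E))=0\Leftrightarrow\lambda(E)=0$ and leaves the global-to-local reduction implicit, whereas you spell out both implications separately and make the countable-cover patching explicit; this is a difference of exposition, not of method.
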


We then move on to study the stochastic behavior of the NNG $\mathcal{G}_n$ as $\m$ satisfies Theorem \ref{thm:main}\ref{assump:measure} and $\bX$ satisfies Assumption \ref{measure assump}. Since in the expression of $\xi_n$, one of the rank terms is indexed by its NNG, the asymptotic distribution of $\xi_n$ has a connection with the properties of the NNG. In the following we introduce a series of lemmas on this topic. To begin with, Lemma \ref{max degree} is a well-known result by \cite{bickel1983sums} on the 
maximum number of nearest neighbors.

\begin{lemma}\label{max degree}
    \emph{(Maximum degree in nearest neighbor graphs).} There is an upper bound for the degree of any point in NNGs. More specifically, let $\bx_1,...,\bx_n$ be any collection of $n$ distinct points in $\mathbb{R}^d$. Then there exists a constant $\mathfrak{C}_d$ depending only on the dimension $d$ such that $\bx_1$ is the nearest neighbor of at most $\mathfrak{C}_d$ points from $\{\bx_2,\ldots, \bx_n\}$.
\end{lemma}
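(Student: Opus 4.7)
My plan is to convert the nearest-neighbor condition into a lower bound on the pairwise angles (as seen from $\bx_1$) of all points that have $\bx_1$ as their nearest neighbor, and then invoke a standard spherical packing bound to cap the number of such points by a dimension-only constant.

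\textbf{Step 1: Angular separation from the NN property.} Suppose $\bx_i$ and $\bx_j$ are two distinct points from $\{\bx_2,\dots,\bx_n\}$ whose nearest neighbor among $\{\bx_1,\dots,\bx_n\}\setminus\{\bx_i\}$ (resp. $\setminus\{\bx_j\}$) is $\bx_1$. Without loss of generality $\lVert \bx_i-\bx_1\rVert\leq \lVert\bx_j-\bx_1\rVert$. Because $\bx_1$ is the NN of $\bx_j$, I have $\lVert \bx_j-\bx_1\rVert\leq \lVert \bx_j-\bx_i\rVert$. Writing $\theta_{ij}$ for the angle $\angle \bx_i\bx_1\bx_j$ and applying the law of cosines in the triangle with vertices $\bx_1,\bx_i,\bx_j$, one gets
\[
\lVert \bx_j-\bx_i\rVert^{2}=\lVert \bx_j-\bx_1\rVert^{2}+\lVert \bx_i-\bx_1\rVert^{2}-2\lVert \bx_j-\bx_1\rVert\lVert \bx_i-\bx_1\rVert\cos\theta_{ij}.
\]
Substituting $\lVert \bx_j-\bx_i\rVert^2\geq \lVert \bx_j-\bx_1\rVert^2$ and simplifying yields
\[
\cos\theta_{ij}\leq \frac{\lVert \bx_i-\bx_1\rVert}{2\lVert \bx_j-\bx_1\rVert}\leq \frac{1}{2},
\]
so $\theta_{ij}\geq \pi/3$.

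\textbf{Step 2: Reduction to a sphere packing problem.} Let $\mathcal{I}\subset\{2,\dots,n\}$ collect all indices of points whose nearest neighbor is $\bx_1$, and consider the map $i\mapsto \bu_i:=(\bx_i-\bx_1)/\lVert \bx_i-\bx_1\rVert\in S^{d-1}$. By Step~1, the set $\{\bu_i:i\in\mathcal{I}\}$ consists of unit vectors with pairwise angular distance at least $\pi/3$, equivalently, pairwise Euclidean distance at least $2\sin(\pi/6)=1$ on $S^{d-1}$.

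\textbf{Step 3: A dimension-only cap.} A standard volumetric argument bounds the cardinality of any $1$-separated subset of $S^{d-1}$: the open Euclidean balls of radius $1/2$ around the $\bu_i$'s are disjoint and all contained in the ball of radius $3/2$ around the origin in $\R^d$, so their number is at most $3^d$. Taking $\mathfrak{C}_d:=3^d$ (or any tighter constant obtained from a refined cap-packing estimate on $S^{d-1}$) completes the proof.

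The argument is essentially self-contained and I do not foresee a real obstacle beyond bookkeeping. The only delicate point is Step~1, where one must apply the nearest-neighbor property to the larger of $\lVert\bx_i-\bx_1\rVert,\lVert\bx_j-\bx_1\rVert$ rather than the smaller to avoid a vacuous inequality; this is why the WLOG ordering is used. The constant $\mathfrak{C}_d$ produced this way depends only on $d$, exactly as required, and the bound is independent of the configuration of the other points.
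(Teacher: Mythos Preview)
Your argument is correct and is the classical geometric proof of this result. Note, however, that the paper does not actually prove Lemma~\ref{max degree}: it is stated as a well-known fact with a citation to Bickel and Breiman (1983), and no argument is given in the body of the paper. So there is no ``paper's own proof'' to compare against; you have supplied a complete, self-contained proof where the authors chose to cite the literature. The angular-separation step and the subsequent sphere-packing bound are exactly the standard route, and your constant $\mathfrak{C}_d=3^d$ is a valid (if not sharp) choice depending only on $d$.
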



The next two lemmas draw the average numbers of some specific structures in an NNG. They are extensions of conclusions in \cite{devroye1988expected} and \cite{henze1987fraction}. We first focus on the number of loops between two vertices, which we call a {\it nearest neighbor pair} in $\mathcal{G}_n$.
\begin{lemma}\label{q}
    \emph{(Expected number of nearest-neighbor pairs).} Consider $\graph$ in $\mathbb{R}^d$ with $\m$ and $\mu$ satisfying the assumption of Theorem \ref{thm:main}\ref{assump:manifold} and Assumption \ref{measure assump}, respectively. We then have, as $n\rightarrow \infty$,
    \begin{align*}
        \E\bigg(\frac{1}{n}\#\Big\{(i,j) \enspace distinct: i\rightarrow j, j\rightarrow i \in \mathcal{E}(\graph)\Big\}\bigg)\rightarrow\frac{V_m}{U_m}:=\kq_m,
    \end{align*}
    where $V_m$ is the volume of the unit ball in $\mathbb{R}^m$, and $U_m$ is the volume of the union of two unit balls in $\mathbb{R}^m$ whose centers are a unit distance apart.
\end{lemma}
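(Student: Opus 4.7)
The plan is to reduce the problem to a single-pair mutual-NN probability and then extract the limit by local rescaling on the manifold. Writing $N_n := \#\{(i,j) \text{ distinct}: i\rightarrow j,\, j\rightarrow i \in \edge\}$, exchangeability and linearity of expectation give $\E(N_n)/n = (n-1)\,\Pr(\bX_1 \leftrightarrow \bX_2)$, so it suffices to show $(n-1)\,\Pr(\bX_1 \leftrightarrow \bX_2) \rightarrow V_m/U_m$. Conditioning on $(\bX_1,\bX_2)=(\bx_1,\bx_2)$ with $r := \lVert \bx_1 - \bx_2 \rVert$, mutual nearest-neighborhood is precisely the event that none of $\bX_3,\ldots,\bX_n$ lies in $B(\bx_1,r)\cup B(\bx_2,r)$, so the conditional probability equals $\bigl(1 - \mu(B(\bx_1,r)\cup B(\bx_2,r))\bigr)^{n-2}$; the target is therefore the double-integral
\[
(n-1)\iint_{\m\times\m}\bigl(1 - \mu(B(\bx_1,r)\cup B(\bx_2,r))\bigr)^{n-2}\,\ds\mu(\bx_1)\,\ds\mu(\bx_2).
\]

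I would next localize via charts and rescale. By Lemma \ref{al measure assumption}, Assumption \ref{measure assump} is in force, so $\mu$ admits a density $f$ with respect to $\mathcal{H}^m\mres\m$. Fix $\bx_1$ in the support and pick a chart $(U,\psi)$ at $\bx_1$ chosen so that $\psi$ is a first-order isometry (e.g., normal coordinates). Smoothness of $\m$ yields the local expansion $\mu(B(\bx_1,r)\cup B(\bx_2,r)) = f(\bx_1)\,U_m\,r^m\,(1+o(1))$ as $r\rightarrow 0$, since $B(\bx_1,r)\cap\m$ and $B(\bx_2,r)\cap\m$ are to leading order two Euclidean $r$-balls in the tangent space $T_{\bx_1}\m\simeq\R^m$ with centers $r$ apart. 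Substituting $\mw := n^{1/m}\psi(\bx_2)$ (with $\psi(\bx_1)=0$) converts $\ds\mu(\bx_2)$ into $n^{-1} f(\bx_1)\,\ds\mw\,(1+o(1))$ and $n\,\mu(B(\bx_1,r)\cup B(\bx_2,r))$ into $f(\bx_1)\,U_m\,\lVert\mw\rVert^m$, so the inner integral should converge to
\[
\int_{\R^m} f(\bx_1)\,\exp\bigl(-f(\bx_1)\,U_m\,\lVert\mw\rVert^m\bigr)\,\ds\mw \;=\; \frac{V_m}{U_m},
\]
by the radial substitution $s = f(\bx_1)\,U_m\,\lVert\mw\rVert^m$. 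Integrating the outer variable $\bx_1$ against $\mu$ and using $\int_{\m} f\,\ds\mathcal{H}^m = 1$ then delivers $V_m/U_m = \kq_m$.

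The main obstacle I anticipate is justifying the interchange of limit and integration uniformly over the base-point $\bx_1$. I would split the outer integral into a local region where $\bx_2$ lies in a chart neighborhood of $\bx_1$ and a tail region with $\lVert\bx_1-\bx_2\rVert$ bounded below; the tail contribution is $o(1/n)$ because $(1-\mu(B(\bx_1,\epsilon)))^{n-2}$ decays exponentially for $\bx_1$ in the interior of the support. For the local region, dominated convergence under the $n^{1/m}$ rescaling requires a uniform-in-$\bx_1$ lower bound $\mu(B(\bx_1,r)\cup B(\bx_2,r)) \gtrsim r^m$, which I would obtain by truncating to a compact subset $K\subset\m$ on which $f$ is bounded between $0$ and $\infty$ and the chart maps (with their inverses) are uniformly bi-Lipschitz (available by smoothness of $\m$ together with a partition-of-unity argument), yielding a common dominating function of the form $e^{-c\lVert\mw\rVert^m}$, and then letting $K$ exhaust the support. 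This uniform control is the step specific to the manifold setting; the scaling calculation itself mirrors the Euclidean argument of \cite{henze1987fraction} with $d$ replaced by the intrinsic dimension $m$.
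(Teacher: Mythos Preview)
Your proposal is correct and follows the same overall scheme as the paper---reduce to a single-pair probability, condition on the base point, and extract the limit via the local tangent-space approximation $\mu\bigl(B(\bx_1,r)\cup B(\bx_2,r)\bigr)\sim f(\bx_1)U_m r^m$ (this is exactly the content of the paper's Lemma~\ref{U lemma}). The two arguments differ in their technical machinery at two places. First, for the interchange of limit and outer integral over $\bx_1$, the paper does not use compact exhaustion: it invokes the geometric degree bound of Lemma~\ref{max degree} (Bickel--Breiman), which gives $\E\bigl(N(\bx_i)\mid \bX_i=\bx_i\bigr)\le \mathfrak{C}_d$ uniformly in $\bx_i$ and $n$, so dominated convergence is immediate and one avoids the partition-of-unity and bi-Lipschitz chart uniformization you outline. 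Second, for the inner integral the paper does not perform your $n^{1/m}$ change of variables; instead it sandwiches the integrand between two explicit monotone radial functions $\phi_1,\phi_2$, applies the layer-cake formula $\E\phi(\lVert\bX_j-\bx_i\rVert)=\int_0^1\mu\bigl(B(\bx_i,\phi^{-1}(t))\bigr)\ds t$, and then uses the ball-volume approximation of Lemma~\ref{B lemma}. Your rescaling route is shorter and more transparent once dominated convergence is available; the paper's route is more elementary but longer. Either combination works, and both ultimately rest on the same local approximation lemmas, so the substantive content is the same with $d$ replaced by $m$.
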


We then turn to another structure in $\mathcal{E}(\mathcal{G}_n)$ that monitors those parent vertices that share the same child vertex. We call them a {\it nearest neighbor triple} in $\mathcal{G}_n$.
\begin{lemma}\label{o}
 Consider $\graph$ in $\mathbb{R}^d$ with $\m$ and $\mu$ satisfying the assumption of Theorem \ref{thm:main}\ref{assump:manifold} and Assumption \ref{measure assump}, respectively. We then have, as $n\rightarrow \infty$,
    \begin{align*}
        \mathrm{E}\bigg(\frac{1}{n}\#\Big\{(i,j,k)\enspace distinct: i\rightarrow k,j\rightarrow k\in \mathcal{E}(\graph)\Big\}\bigg)\rightarrow\ko_m.
    \end{align*}
\end{lemma}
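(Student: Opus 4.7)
The plan is to adapt the strategy of Lemma \ref{q}, namely the classical Henze--Devroye nearest-neighbor computation, to the manifold setting by exploiting the chart structure guaranteed by Assumption \ref{measure assump} (equivalent to Theorem \ref{thm:main}\ref{assump:measure} by Lemma \ref{al measure assumption}). By exchangeability,
\[
\E\Big(\frac{1}{n}\#\big\{(i,j,k)\text{ distinct}: i\to k, j\to k\big\}\Big)
=(n-1)(n-2)\,P\big(\bX_2\to\bX_1,\ \bX_3\to\bX_1\big).
\]
Conditioning on $(\bX_1,\bX_2,\bX_3)$ and integrating out the remaining $n-3$ points, the event in question factors into: (i) the geometric condition $\max(\|\bX_2-\bX_1\|,\|\bX_3-\bX_1\|)<\|\bX_2-\bX_3\|$, which after the centering $\mw_i=\bX_{i+1}-\bX_1$ is exactly the constraint defining $\Gamma_{m;2}$; and (ii) none of $\bX_4,\ldots,\bX_n$ lies in $B(\bX_2,\|\bX_2-\bX_1\|)\cup B(\bX_3,\|\bX_3-\bX_1\|)$, which contributes the closed-form factor $\{1-\mu(B_1\cup B_2)\}^{n-3}$.

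The next step is to localize and rescale. Around each $\bx_1\in\m$, take a chart $(U_{\bx_1},\psi_{\bx_1})$ adapted to the tangent space $T_{\bx_1}\m$, so that on a neighborhood of radius $O(n^{-1/m})$ the ambient Euclidean distance on $\m$ agrees with the chart-Euclidean distance on $\mathbb{R}^m$ up to $o(1)$, and the $\mu$-mass of a small ball equals $(1+o(1))\rho(\psi(\bx_1))\lambda(\cdot)$, where $\rho$ denotes the density of $\psi_*\mu_U$ furnished by Assumption \ref{measure assump}. Substituting $\mw_i=n^{1/m}(\psi(\bX_{i+1})-\psi(\bX_1))$ for $i=1,2$ produces a Jacobian $n^{-2}$, and the ball-complement factor converges to $\exp\{-\rho(\psi(\bx_1))\lambda(B(\mw_1,\|\mw_1\|)\cup B(\mw_2,\|\mw_2\|))\}$. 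A final homogeneity rescaling $\mw_i\mapsto\rho(\psi(\bx_1))^{-1/m}\mw_i$, which leaves $\Gamma_{m;2}$ and the exponent invariant while scaling the Jacobian by $\rho(\psi(\bx_1))^{-2}$, cancels the density prefactor exactly and collapses the double integral to the constant $\ko_m$. Integrating this constant against the law of $\bX_1$ then yields the claimed limit.

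The main obstacle is making the above pointwise limits uniform enough to justify passing to the limit under the $\bX_1$-integration. Two ingredients will be required. First, uniform control of the chart distortion on compact pieces of $\m$: this should be handled via a partition of unity on $\m$, together with the observation that points near chart boundaries (and the zero-density set of $\rho$) contribute negligibly. Second, a dominating function for the dominated-convergence argument: the required bound combines exponential $\exp(-c\|\mw_i\|^m)$ decay at large radii with Lemma \ref{max degree}, which caps the in-degree of any vertex in an NNG and hence rules out pathological short-range accumulation. With these two inputs in place, the remaining estimates should closely mirror those used in the proof of Lemma \ref{q}.
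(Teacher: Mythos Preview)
Your proposal is correct and follows the paper's argument almost step for step: reduce by exchangeability to $(n-1)(n-2)\mathrm{P}(Q_j\cap Q_k\mid \bX_i=\bx)$, express this as a double integral over $\Gamma_{\bx}$ with factor $(1-\mu(S_j\cup S_k))^{n-3}$, split into a $\delta$-neighborhood and its complement, approximate the $\mu$-mass of the ball union via the chart (Lemmas \ref{B2 lemma}, \ref{B2cup lemma}, \ref{area lemma}), rescale by $((n-2)g(\psi(\bx)))^{1/m}$ to obtain $\ko_m$, and apply Lemma \ref{max degree} to dominate and pass the limit through the outer $\bx$-integral. One simplification relative to your outline: the paper does not use a partition of unity, since the chart computation is carried out \emph{pointwise} for each fixed $\bx$ and the only uniformity needed for the outer dominated-convergence step is the in-degree bound $M(\bx)\le\mathfrak{C}_d^2$.
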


Get back to the data $(\bX_i,Y_i), \enspace i=1,...,n$. Let assumptions in Theorem \ref{thm:main} hold and we construct $\graph$ on the manifold data $[\bX_i]_{i=1}^n$. Recall that $m$ denotes the manifold dimension and $\mathfrak{q}_{m}$, $\mathfrak{o}_{m}$ are positive constants depending only on $m$. With the lemmas presented above, it is then straightforward to derive the limiting variance following the proof of \citet[Theorem 3.1]{shi2021ac}.
\begin{theorem}\label{main2}
Suppose that the conditions in Theorem \ref{thm:main} hold. We then have, as $n\rightarrow \infty$,
\begin{align*}
        \Var\Big(\sqrt{n}\xi_n\Big) \text{ converges to } \frac{2}{5}+\frac{2}{5}\kq_{m}+\frac{4}{5}\ko_{m} \text{, as }n\to\infty.
    \end{align*}
\end{theorem}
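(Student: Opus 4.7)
The plan is to adapt the variance computation of \citet[Theorem 3.1]{shi2021ac} to the manifold setting, with the manifold hypothesis entering only through the pair and triple counts supplied by Lemmas \ref{q} and \ref{o}. Writing $T_n := \sum_{i=1}^n \min(R_i, R_{N(i)})$ and recalling $\xi_n = 6T_n/(n^2-1) - (2n+1)/(n-1)$, one has
\[
\Var\bigl(\sqrt{n}\,\xi_n\bigr) = \frac{36n}{(n^2-1)^2}\,\Var(T_n).
\]
Under $H_0$ the rank vector $(R_1, \ldots, R_n)$ is a uniform random permutation of $\{1, \ldots, n\}$ independent of $\graph$, so $\E[T_n \mid \graph] = n(n+1)/3$ is deterministic and $\Var(T_n) = \E\bigl[\Var(T_n \mid \graph)\bigr]$ by the tower property. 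It therefore suffices to show $n^{-3}\E\bigl[\Var(T_n \mid \graph)\bigr] \to (2 + 2\kq_m + 4\ko_m)/90$.

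First I would expand
\[
\Var(T_n \mid \graph) = \sum_{i,j=1}^n \mathrm{Cov}\bigl(\min(R_i, R_{N(i)}),\; \min(R_j, R_{N(j)}) \bigm| \graph\bigr)
\]
and classify each ordered pair $(i,j)$ by the overlap of $\{i, N(i)\}$ with $\{j, N(j)\}$: \textbf{(D)} the diagonal $i=j$; \textbf{(C)} a mutual nearest-neighbour pair, $N(i)=j$ and $N(j)=i$; \textbf{(B)} exactly one shared vertex, further split into the shared-target triple case $N(i)=N(j)$ and the two chain cases $N(i)=j$ with $N(j)\neq i$ and its mirror; and \textbf{(A)} four distinct indices. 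By permutation exchangeability, each conditional covariance is a function only of $n$ and of the overlap pattern. These are readily evaluated in closed form from standard identities for mins of uniform permutation ranks, namely $\E[\min(R_a,R_b)]=(n+1)/3$, $\Var[\min(R_a,R_b)]=(n+1)(n-2)/18$, and the mixed second moments computed by enumerating orderings of three- and four-element subsets of $\{1,\ldots,n\}$.

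Next I would count the number of pairs in each regime. Lemma \ref{q} gives the ordered mutual-pair count as $n\kq_m+o(n)$, and Lemma \ref{o} gives the ordered triple count as $n\ko_m+o(n)$. Each of the two chain counts equals $n(1-\kq_m)+o(n)$, because every vertex has exactly one outgoing edge so the total number of directed 2-paths is $n$, from which the mutual-pair count must be subtracted; regime (A) then contains $n^2 - O(n)$ ordered pairs. Lemma \ref{max degree} bounds in-degrees uniformly in $n$ and is used to promote the first-moment convergence in Lemmas \ref{q} and \ref{o} to convergence of the normalised expected counts appearing inside $\E[\Var(T_n\mid\graph)]$. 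Multiplying each closed-form covariance by its regime count and collecting the $n^3$ coefficients simplifies to $(2+2\kq_m+4\ko_m)/90$, which together with $36n^4/(n^2-1)^2\to 36$ gives the claimed limit.

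The delicate step will be regime (A). Its conditional covariance is only $O(n)$ --- strictly smaller than the $O(n^2)$ covariances of regimes (D), (C), (B) --- yet its regime count is $\sim n^2$, so it still feeds the leading $n^3$ term. The exact $O(n)$ coefficient (which turns out to be $-4/45$) must be computed from a direct enumeration of the three ways to partition a four-element subset of $\{1,\ldots,n\}$ into two pairs, and the resulting negative contribution is what collapses the positive contributions of regimes (D), (C), (B) onto the clean coefficients $2/5$, $(2/5)\kq_m$, and $(4/5)\ko_m$. Once this bookkeeping is performed, the argument mirrors the Euclidean case treated in \citet{shi2021ac}.
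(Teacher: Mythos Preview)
Your proposal is correct and reaches the same limit, but it takes a genuinely different route from the paper. The paper does \emph{not} work with the ranks $R_i$ directly; instead it passes to the H\'ajek-type intermediate statistic $\widecheck{\xi}_n$ built from $F_Y(Y_i)$ (i.i.d.\ $\mathrm{Unif}[0,1]$ under $H_0$), invokes \cite{lin2022limit} for $n\Var[\xi_n-\widecheck{\xi}_n]\to0$, and then computes $n\Var[\widecheck{\xi}_n]$ via the centred summands $V_i=n^{-1/2}\bigl(A_{i,N(i)}-\tfrac{1}{n-1}\sum_{j\ne i}A_{ij}\bigr)$ with $A_{ij}=6\min\{F_Y(Y_i),F_Y(Y_j)\}-2$. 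The two moments $\gamma_1=\E A_{ij}^2=2$ and $\gamma_2=\E A_{ij}A_{ik}=4/5$ then do all the work, and the $-\tfrac{1}{n-1}\sum_{j}A_{ij}$ centering absorbs exactly the contribution you isolate as regime (A). Your approach is more self-contained---it avoids the external H\'ajek approximation result and the machinery of \cite{lin2022limit}---at the cost of having to compute permutation-rank covariances (with their sampling-without-replacement corrections) rather than i.i.d.\ uniform moments, and in particular having to extract the delicate $-4/45$ coefficient in regime (A) by hand. The paper's approach is cleaner algebraically once the H\'ajek step is granted, and its framework extends more readily beyond the null; yours is arguably more elementary for the null case alone. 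One small note: since the regime covariances are deterministic given the overlap pattern, $\E[\Var(T_n\mid\graph)]$ is already linear in the \emph{expected} regime counts, so Lemmas \ref{q} and \ref{o} suffice directly and your appeal to Lemma \ref{max degree} is not actually needed at this step.
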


Lastly, when $Y$ is independent of $\X$, using Theorem 1.1 in \cite{lin2022limit}, we have
\begin{equation*}
    \frac{\xi_n}{\sqrt{\Var(\xi_n)}} \stackrel{d}{\rightarrow} N(0,1).
\end{equation*}
Combining the above result with Theorem \ref{main2} then yields Theorem \ref{thm:main}.

\subsection{Some finite-sample studies}

This section contains some finite-sample simulation results to examine the independence test powers, comparing the performance of $\xi_n$  to that of distance correlation \citep{MR2382665}. We examine the sizes and powers of the proposed tests when the data are supported on a manifold satisfying theorem assumptions and with manifold dimension {\it known to us}.\footnote{In practice, if the manifold dimension is unknown, one could either estimate it or use permutation to obtain the test threshold.} Power comparisons are carried out with sample size $n = 100$. In each case, 5,000 simulations are used to calculate the empirical size/power.

We first generate the raw data $(Y_i,\bZ_i)$, $i=1,...,n$. Here $(Y_1,\bZ_1),...,(Y_n,\bZ_n)$ constitute an sample of points independently drawn from a certain distribution on $\mathbb{R}\times\mathbb{R}^m$. The value of $m$ will change in simulations.

\begin{itemize}
\item Case 1 (Gaussian): $(Y,\bZ)$ is Gaussian distributed with mean $\bm{0}$ and equi-correlation $\rho$ between $Y$ and each component of $\bZ$, i.e., $(Y,\bZ)\sim N(\bm{0},\Sigma)$ with
\[
\Sigma :=
\begin{pmatrix}
    1 & \rho \bm{1}_m^{\top}\\
    \rho \bm{1}_m & I_m
\end{pmatrix},
\]
where $\bm{1}_m:=(\underbrace{1,\ldots,1}_{m})^\top$ and $I_m$ represents the $m$-dimensional identity matrix.
\end{itemize}

In the following five cases, we set $\bZ = (Z_1,...,Z_m)^\top$, where $Z_j\sim \mathrm{Unif}[-1,1]$ is independent of each other, and consider an additive model:
\begin{align*}
    Y = \rho \sum_{i=1}^m f(Z_i)+C\epsilon,
\end{align*}
where $\epsilon \sim N(0,1)$ is independent of $\bZ$. We fix $C$ as a constant in each case to modify the noise intensity.

\begin{itemize}
\item Case 2 (Linear): $f(x) = x$, $C=0.2$;
\item Case 3 (Quadratic): $f(x) = x^2$, $C=0.1$;
\item Case 4 (Cosine): $f(x) = \cos(8\pi x)$, $C=0.1$;
\item Case 5 (W-shape): $f(x) = \vert x+0.5\vert I_{\{x<0\}}+\vert x-0.5\vert I_{\{x\geq0\}}$, $C=0.025$.
\end{itemize}

Regarding each of the five cases, we then conduct the following two types of transformation to obtain the manifold data. 
\begin{enumerate}[itemsep=-.5ex,label=(\arabic*)]
\item Linear transformation: $\bZ \mapsto R\bZ =: \bX$, where $R$ is a pre-selected $5m$ by $m$ matrix. For each dimension $m$,  we randomly generate $R$ from a standard Gaussian random matrix $(R_{i j})_{5m\times m}$, where all its elements are independent, and for $i=1,...,5m$, $j=1,...,m$, $R_{i j}\sim N(0,1)$. 
The sample of transformed points lies on a $m$-dimensional linear subspace in $\mathbb{R}^{5m}$.
\item Manifold transformation: $\bZ \mapsto M(\bZ)=:\bX$, which is a map from $\R^m$ to a pre-specific $m$-dimensional smooth manifold in $\R^{5m}$. In the following simulations, the mapping takes the following specific forms:
\begin{align*}
    M(\bZ) = (M_1(\bZ),M_2(\bZ),M_3(\bZ),M_4(\bZ),M_5(\bZ))^\top,
\end{align*}
where
\begin{align*}
    M_1(\bZ):=&(Z_1,...,Z_m)=\bZ,\\
    M_2(\bZ):=&(Z_{1}^2,...,Z_{m}^2),\\
    M_3(\bZ):=&(\sin(8\pi Z_{1}),...,\sin(8\pi Z_{m})),\\
    M_4(\bZ):=&(\cos(4\pi Z_{1}),...,\cos(4\pi Z_{m})),\\
    M_5(\bZ):=&(\exp(Z_{1}),...,\exp(Z_{m})).
\end{align*}
\end{enumerate}

We perform tests of independence with parameters $m=1,2,3,5,10$ and $\rho = 0,0.05,0.10,0.15,0.20$ for both $\xi_n$ and distance correlation. Nominal level is set to be $\alpha=0.05$ for all tests. In all the cases, $\rho = 0$ corresponds to the null hypothesis 
\[
H_0: Y \text{ and }\bX  \text{ are independent,} 
\]
while the rest values of $\rho$ yield powers in accordance with different degrees of dependence. The thresholds of $\xi_n$ and distance correlation are determined by Theorem \ref{thm:main} and permutation, respectively. Table \ref{tab:1.1} and Table \ref{tab:1.2} illustrate test powers for Gaussian (Case 1) with linear and manifold transformation, respectively. Tables \ref{tab:2.1}-\ref{tab:6.2} analogously illustrate test powers for the additive model cases (Case 2-6) with two transformations in sequence.

Three observations are in line. 
\begin{enumerate}[itemsep=-.5ex,label=(\roman*)]
\item All the tests considered have empirical sizes close to 0.05, indicating that they are all size valid. 
\item When the joint distribution of $(Y,\bZ)$ is a multi-dimension normal distribution, the test power increases as $m$ or $\rho$ increases. The distance correlation based tests exhibit higher power compared to $\xi_n$-based, indicating that the distance correlation could potentially also adapt to the manifold structure of $\bX$, an interesting phenomenon largely untouched in literature before. 
\item When the function $f$ exhibits oscillatory properties, the test power increases as $\rho$ increases. However, when $m$ increases, in most cases, the power of our proposed tests shows a U-shape, i.e., decreasing first and then increasing; yet the distance correlation based tests shows monotonically decreasing power. Thus, for high-dimensional data, our proposed test might be more powerful. When $m=1$, our proposed test is also more powerful in some cases. 
\end{enumerate}

\begin{table}[!ht]
\centering
\setlength{\tabcolsep}{1.2mm}{ 
\caption{Case 1, linear transformation}
\label{tab:1.1}
\begin{tabular}{ccccccccccc}
\toprule
\multicolumn{1}{c}{} & \multicolumn{5}{c}{$\xi_n$ based}                                                                                                                                             & \multicolumn{5}{c}{distance correlation based} \\                                                                                                                                   \cmidrule(lr){2-6}\cmidrule(lr){7-11}
                       & \multicolumn{1}{c}{$\rho = 0$} & \multicolumn{1}{c}{$\rho = 0.05$} & \multicolumn{1}{c}{$\rho = 0.10$} & \multicolumn{1}{c}{$\rho = 0.15$} & \multicolumn{1}{c}{$\rho=0.20$} & \multicolumn{1}{c}{$\rho = 0$} & \multicolumn{1}{c}{$\rho = 0.05$} & \multicolumn{1}{c}{$\rho = 0.10$} & \multicolumn{1}{c}{$\rho = 0.15$} & \multicolumn{1}{c}{$\rho=0.20$} \\ 
$m = 1$               & \multicolumn{1}{c}{0.050}      & \multicolumn{1}{c}{0.051}         & \multicolumn{1}{c}{0.048}         & \multicolumn{1}{c}{0.053}         & 0.054                            & \multicolumn{1}{c}{0.053}      & \multicolumn{1}{c}{0.066}         & \multicolumn{1}{c}{0.144}         & \multicolumn{1}{c}{0.271}         & 0.455                            \\ 
$m = 2$               & \multicolumn{1}{c}{0.044}      & \multicolumn{1}{c}{0.048}         & \multicolumn{1}{c}{0.042}         & \multicolumn{1}{c}{0.046}         & 0.052                            & \multicolumn{1}{c}{0.044}      & \multicolumn{1}{c}{0.080}         & \multicolumn{1}{c}{0.208}         & \multicolumn{1}{c}{0.405}         & 0.609                            \\ 
$m = 3$               & \multicolumn{1}{c}{0.050}      & \multicolumn{1}{c}{0.049}         & \multicolumn{1}{c}{0.055}         & \multicolumn{1}{c}{0.056}         & 0.079                            & \multicolumn{1}{c}{0.052}      & \multicolumn{1}{c}{0.074}         & \multicolumn{1}{c}{0.229}         & \multicolumn{1}{c}{0.454}         & 0.766                            \\ 
$m = 5$               & \multicolumn{1}{c}{0.057}      & \multicolumn{1}{c}{0.058}         & \multicolumn{1}{c}{0.049}         & \multicolumn{1}{c}{0.065}         & 0.112                            & \multicolumn{1}{c}{0.041}      & \multicolumn{1}{c}{0.141}         & \multicolumn{1}{c}{0.331}         & \multicolumn{1}{c}{0.604}         & 0.844                            \\ 
$m = 10$              & \multicolumn{1}{c}{0.066}      & \multicolumn{1}{c}{0.060}         & \multicolumn{1}{c}{0.057}         & \multicolumn{1}{c}{0.099}         & 0.215                            & \multicolumn{1}{c}{0.043}      & \multicolumn{1}{c}{0.106}         & \multicolumn{1}{c}{0.420}         & \multicolumn{1}{c}{0.842}         & 0.995                            \\ 
\bottomrule
\end{tabular}}
\end{table}

\begin{table}[!ht]
\centering
\setlength{\tabcolsep}{1.2mm}{ 
\caption{Case 1, manifold transformation}
\label{tab:1.2}
\begin{tabular}{ccccccccccc}
\toprule
\multicolumn{1}{c}{} & \multicolumn{5}{c}{$\xi_n$ based}                                                                                                                                             & \multicolumn{5}{c}{distance correlation based} \\                                                                                                                                   \cmidrule(lr){2-6}\cmidrule(lr){7-11}
                       & \multicolumn{1}{c}{$\rho = 0$} & \multicolumn{1}{c}{$\rho = 0.05$} & \multicolumn{1}{c}{$\rho = 0.10$} & \multicolumn{1}{c}{$\rho = 0.15$} & \multicolumn{1}{c}{$\rho=0.20$} & \multicolumn{1}{c}{$\rho = 0$} & \multicolumn{1}{c}{$\rho = 0.05$} & \multicolumn{1}{c}{$\rho = 0.10$} & \multicolumn{1}{c}{$\rho = 0.15$} & \multicolumn{1}{c}{$\rho=0.20$} \\ \hline
$m = 1$               & \multicolumn{1}{c}{0.058}      & \multicolumn{1}{c}{0.051}         & \multicolumn{1}{c}{0.051}         & \multicolumn{1}{c}{0.050}         & 0.059                            & \multicolumn{1}{c}{0.041}      & \multicolumn{1}{c}{0.051}         & \multicolumn{1}{c}{0.114}         & \multicolumn{1}{c}{0.269}         & 0.336                            \\ 
$m = 2$               & \multicolumn{1}{c}{0.058}      & \multicolumn{1}{c}{0.055}         & \multicolumn{1}{c}{0.059}         & \multicolumn{1}{c}{0.058}         & 0.065                            & \multicolumn{1}{c}{0.048}      & \multicolumn{1}{c}{0.088}         & \multicolumn{1}{c}{0.122}         & \multicolumn{1}{c}{0.287}         & 0.548                            \\  
$m = 3$               & \multicolumn{1}{c}{0.075}      & \multicolumn{1}{c}{0.068}         & \multicolumn{1}{c}{0.065}         & \multicolumn{1}{c}{0.071}         & 0.079                            & \multicolumn{1}{c}{0.060}      & \multicolumn{1}{c}{0.056}         & \multicolumn{1}{c}{0.106}         & \multicolumn{1}{c}{0.448}         & 0.652                            \\  
$m = 5$               & \multicolumn{1}{c}{0.063}      & \multicolumn{1}{c}{0.061}         & \multicolumn{1}{c}{0.059}         & \multicolumn{1}{c}{0.086}         & 0.115                            & \multicolumn{1}{c}{0.051}      & \multicolumn{1}{c}{0.085}         & \multicolumn{1}{c}{0.159}         & \multicolumn{1}{c}{0.575}         & 0.792                            \\  
$m = 10$              & \multicolumn{1}{c}{0.065}      & \multicolumn{1}{c}{0.061}         & \multicolumn{1}{c}{0.068}         & \multicolumn{1}{c}{0.082}         & 0.155                            & \multicolumn{1}{c}{0.064}      & \multicolumn{1}{c}{0.115}         & \multicolumn{1}{c}{0.248}         & \multicolumn{1}{c}{0.735}         & 0.956                            \\ \bottomrule
\end{tabular}}
\end{table}

\begin{table}[!ht]
\centering
\setlength{\tabcolsep}{1.2mm}{ 
\caption{Case 2, linear transformation}
\label{tab:2.1}
\begin{tabular}{ccccccccccc}
\toprule
\multicolumn{1}{c}{} & \multicolumn{5}{c}{$\xi_n$ based}                                                                                                                                             & \multicolumn{5}{c}{distance correlation based} \\                                                                                                                                   \cmidrule(lr){2-6}\cmidrule(lr){7-11}
                       & \multicolumn{1}{c}{$\rho = 0$} & \multicolumn{1}{c}{$\rho = 0.05$} & \multicolumn{1}{c}{$\rho = 0.10$} & \multicolumn{1}{c}{$\rho = 0.15$} & \multicolumn{1}{c}{$\rho=0.20$} & \multicolumn{1}{c}{$\rho = 0$} & \multicolumn{1}{c}{$\rho = 0.05$} & \multicolumn{1}{c}{$\rho = 0.10$} & \multicolumn{1}{c}{$\rho = 0.15$} & \multicolumn{1}{c}{$\rho=0.20$} \\ \hline
$m = 1$               & \multicolumn{1}{c}{0.058}      & \multicolumn{1}{c}{0.065}         & \multicolumn{1}{c}{0.070}         & \multicolumn{1}{c}{0.119}         & 0.268                            & \multicolumn{1}{c}{0.060}      & \multicolumn{1}{c}{0.263}         & \multicolumn{1}{c}{0.767}         & \multicolumn{1}{c}{0.981}         & 1.000                            \\ 
$m = 2$               & \multicolumn{1}{c}{0.052}      & \multicolumn{1}{c}{0.054}         & \multicolumn{1}{c}{0.091}         & \multicolumn{1}{c}{0.275}         & 0.603                            & \multicolumn{1}{c}{0.048}      & \multicolumn{1}{c}{0.368}         & \multicolumn{1}{c}{0.952}         & \multicolumn{1}{c}{0.999}         & 1.000                            \\ 
$m = 3$               & \multicolumn{1}{c}{0.046}      & \multicolumn{1}{c}{0.066}         & \multicolumn{1}{c}{0.153}         & \multicolumn{1}{c}{0.440}         & 0.791                            & \multicolumn{1}{c}{0.047}      & \multicolumn{1}{c}{0.515}         & \multicolumn{1}{c}{0.986}         & \multicolumn{1}{c}{1.000}         & 1.000                            \\ 
$m = 5$               & \multicolumn{1}{c}{0.061}      & \multicolumn{1}{c}{0.065}         & \multicolumn{1}{c}{0.270}         & \multicolumn{1}{c}{0.664}         & 0.913                            & \multicolumn{1}{c}{0.049}      & \multicolumn{1}{c}{0.661}         & \multicolumn{1}{c}{0.997}         & \multicolumn{1}{c}{1.000}         & 1.000                            \\ 
$m = 10$              & \multicolumn{1}{c}{0.053}      & \multicolumn{1}{c}{0.069}         & \multicolumn{1}{c}{0.259}         & \multicolumn{1}{c}{0.528}         & 0.690                            & \multicolumn{1}{c}{0.052}      & \multicolumn{1}{c}{0.613}         & \multicolumn{1}{c}{0.991}         & \multicolumn{1}{c}{1.000}         & 1.000                            \\ \bottomrule
\end{tabular}}
\end{table}

\begin{table}[!ht]
\centering
\setlength{\tabcolsep}{1.2mm}{ 
\caption{Case 2, manifold transformation}
\label{tab:2.2}
\begin{tabular}{ccccccccccc}
\toprule
\multicolumn{1}{c}{} & \multicolumn{5}{c}{$\xi_n$ based}                                                                                                                                             & \multicolumn{5}{c}{distance correlation based} \\                                                                                                                                   \cmidrule(lr){2-6}\cmidrule(lr){7-11}
                       & \multicolumn{1}{c}{$\rho = 0$} & \multicolumn{1}{c}{$\rho = 0.05$} & \multicolumn{1}{c}{$\rho = 0.10$} & \multicolumn{1}{c}{$\rho = 0.15$} & \multicolumn{1}{c}{$\rho=0.20$} & \multicolumn{1}{c}{$\rho = 0$} & \multicolumn{1}{c}{$\rho = 0.05$} & \multicolumn{1}{c}{$\rho = 0.10$} & \multicolumn{1}{c}{$\rho = 0.15$} & \multicolumn{1}{c}{$\rho=0.20$} \\ \hline
$m = 1$               & \multicolumn{1}{c}{0.053}      & \multicolumn{1}{c}{0.051}         & \multicolumn{1}{c}{0.061}         & \multicolumn{1}{c}{0.118}         & 0.267                            & \multicolumn{1}{c}{0.055}      & \multicolumn{1}{c}{0.199}         & \multicolumn{1}{c}{0.671}         & \multicolumn{1}{c}{0.949}         & 0.998                            \\ 
$m = 2$               & \multicolumn{1}{c}{0.059}      & \multicolumn{1}{c}{0.054}         & \multicolumn{1}{c}{0.086}         & \multicolumn{1}{c}{0.206}         & 0.447                            & \multicolumn{1}{c}{0.046}      & \multicolumn{1}{c}{0.310}         & \multicolumn{1}{c}{0.841}         & \multicolumn{1}{c}{0.995}         & 1.000                            \\  
$m = 3$               & \multicolumn{1}{c}{0.068}      & \multicolumn{1}{c}{0.069}         & \multicolumn{1}{c}{0.115}         & \multicolumn{1}{c}{0.284}         & 0.539                            & \multicolumn{1}{c}{0.049}      & \multicolumn{1}{c}{0.345}         & \multicolumn{1}{c}{0.937}         & \multicolumn{1}{c}{0.999}         & 1.000                            \\ 
$m = 5$               & \multicolumn{1}{c}{0.077}      & \multicolumn{1}{c}{0.073}         & \multicolumn{1}{c}{0.138}         & \multicolumn{1}{c}{0.317}         & 0.529                            & \multicolumn{1}{c}{0.047}      & \multicolumn{1}{c}{0.477}         & \multicolumn{1}{c}{0.978}         & \multicolumn{1}{c}{1.000}         & 1.000                            \\ 
$m = 10$              & \multicolumn{1}{c}{0.080}      & \multicolumn{1}{c}{0.074}         & \multicolumn{1}{c}{0.127}         & \multicolumn{1}{c}{0.236}         & 0.334                            & \multicolumn{1}{c}{0.047}      & \multicolumn{1}{c}{0.651}         & \multicolumn{1}{c}{0.996}         & \multicolumn{1}{c}{1.000}         & 1.000                            \\ \bottomrule
\end{tabular}}
\end{table}

\begin{table}[!ht]
\centering
\setlength{\tabcolsep}{1.2mm}{ 
\caption{Case 3, linear transformation}
\label{tab:3.1}
\begin{tabular}{ccccccccccc}
\toprule
\multicolumn{1}{c}{} & \multicolumn{5}{c}{$\xi_n$ based}                                                                                                                                             & \multicolumn{5}{c}{distance correlation based} \\                                                                                                                                   \cmidrule(lr){2-6}\cmidrule(lr){7-11}
                       & \multicolumn{1}{c}{$\rho = 0$} & \multicolumn{1}{c}{$\rho = 0.05$} & \multicolumn{1}{c}{$\rho = 0.10$} & \multicolumn{1}{c}{$\rho = 0.15$} & \multicolumn{1}{c}{$\rho=0.20$} & \multicolumn{1}{c}{$\rho = 0$} & \multicolumn{1}{c}{$\rho = 0.05$} & \multicolumn{1}{c}{$\rho = 0.10$} & \multicolumn{1}{c}{$\rho = 0.15$} & \multicolumn{1}{c}{$\rho=0.20$} \\ \hline
$m = 1$               & \multicolumn{1}{c}{0.045}      & \multicolumn{1}{c}{0.053}         & \multicolumn{1}{c}{0.071}         & \multicolumn{1}{c}{0.134}         & 0.278                            & \multicolumn{1}{c}{0.050}      & \multicolumn{1}{c}{0.072}         & \multicolumn{1}{c}{0.240}         & \multicolumn{1}{c}{0.586}         & 0.914                            \\ 
$m = 2$               & \multicolumn{1}{c}{0.052}      & \multicolumn{1}{c}{0.044}         & \multicolumn{1}{c}{0.080}         & \multicolumn{1}{c}{0.222}         & 0.493                            & \multicolumn{1}{c}{0.049}      & \multicolumn{1}{c}{0.080}         & \multicolumn{1}{c}{0.211}         & \multicolumn{1}{c}{0.499}         & 0.815                            \\ 
$m = 3$               & \multicolumn{1}{c}{0.056}      & \multicolumn{1}{c}{0.048}         & \multicolumn{1}{c}{0.073}         & \multicolumn{1}{c}{0.189}         & 0.402                            & \multicolumn{1}{c}{0.051}      & \multicolumn{1}{c}{0.078}         & \multicolumn{1}{c}{0.196}         & \multicolumn{1}{c}{0.461}         & 0.648                            \\  
$m = 5$               & \multicolumn{1}{c}{0.054}      & \multicolumn{1}{c}{0.064}         & \multicolumn{1}{c}{0.056}         & \multicolumn{1}{c}{0.046}         & 0.050                            & \multicolumn{1}{c}{0.059}      & \multicolumn{1}{c}{0.099}         & \multicolumn{1}{c}{0.141}         & \multicolumn{1}{c}{0.375}         & 0.532                            \\ 
$m = 10$              & \multicolumn{1}{c}{0.059}      & \multicolumn{1}{c}{0.254}         & \multicolumn{1}{c}{0.476}         & \multicolumn{1}{c}{0.573}         & 0.620                            & \multicolumn{1}{c}{0.042}      & \multicolumn{1}{c}{0.056}         & \multicolumn{1}{c}{0.161}         & \multicolumn{1}{c}{0.327}         & 0.441                            \\ \bottomrule
\end{tabular}}
\end{table}

\begin{table}[!ht]
\centering
\setlength{\tabcolsep}{1.2mm}{ 
\caption{Case 3, manifold transformation}
\label{tab:3.2}
\begin{tabular}{ccccccccccc}
\toprule
\multicolumn{1}{c}{} & \multicolumn{5}{c}{$\xi_n$ based}                                                                                                                                             & \multicolumn{5}{c}{distance correlation based} \\                                                                                                                                   \cmidrule(lr){2-6}\cmidrule(lr){7-11}
                       & \multicolumn{1}{c}{$\rho = 0$} & \multicolumn{1}{c}{$\rho = 0.05$} & \multicolumn{1}{c}{$\rho = 0.10$} & \multicolumn{1}{c}{$\rho = 0.15$} & \multicolumn{1}{c}{$\rho=0.20$} & \multicolumn{1}{c}{$\rho = 0$} & \multicolumn{1}{c}{$\rho = 0.05$} & \multicolumn{1}{c}{$\rho = 0.10$} & \multicolumn{1}{c}{$\rho = 0.15$} & \multicolumn{1}{c}{$\rho=0.20$} \\ \hline
$m = 1$               & \multicolumn{1}{c}{0.047}      & \multicolumn{1}{c}{0.052}         & \multicolumn{1}{c}{0.068}         & \multicolumn{1}{c}{0.123}         & 0.264                            & \multicolumn{1}{c}{0.055}      & \multicolumn{1}{c}{0.068}         & \multicolumn{1}{c}{0.239}         & \multicolumn{1}{c}{0.514}         & 0.834                            \\  
$m = 2$               & \multicolumn{1}{c}{0.063}      & \multicolumn{1}{c}{0.058}         & \multicolumn{1}{c}{0.056}         & \multicolumn{1}{c}{0.082}         & 0.141                            & \multicolumn{1}{c}{0.047}      & \multicolumn{1}{c}{0.099}         & \multicolumn{1}{c}{0.247}         & \multicolumn{1}{c}{0.554}         & 0.886                            \\ 
$m = 3$               & \multicolumn{1}{c}{0.068}      & \multicolumn{1}{c}{0.088}         & \multicolumn{1}{c}{0.081}         & \multicolumn{1}{c}{0.069}         & 0.062                            & \multicolumn{1}{c}{0.049}      & \multicolumn{1}{c}{0.096}         & \multicolumn{1}{c}{0.285}         & \multicolumn{1}{c}{0.569}         & 0.799                            \\ 
$m = 5$               & \multicolumn{1}{c}{0.070}      & \multicolumn{1}{c}{0.130}         & \multicolumn{1}{c}{0.184}         & \multicolumn{1}{c}{0.204}         & 0.190                            & \multicolumn{1}{c}{0.040}      & \multicolumn{1}{c}{0.162}         & \multicolumn{1}{c}{0.221}         & \multicolumn{1}{c}{0.647}         & 0.819                            \\  
$m = 10$              & \multicolumn{1}{c}{0.078}      & \multicolumn{1}{c}{0.291}         & \multicolumn{1}{c}{0.519}         & \multicolumn{1}{c}{0.610}         & 0.673                            & \multicolumn{1}{c}{0.065}      & \multicolumn{1}{c}{0.059}         & \multicolumn{1}{c}{0.271}         & \multicolumn{1}{c}{0.604}         & 0.832                            \\ \bottomrule
\end{tabular}}
\end{table}

\begin{table}[!ht]
\centering
\setlength{\tabcolsep}{1.2mm}{ 
\caption{Case 4, linear transformation}
\label{tab:4.1}
\begin{tabular}{ccccccccccc}
\toprule
\multicolumn{1}{c}{} & \multicolumn{5}{c}{$\xi_n$ based}                                                                                                                                             & \multicolumn{5}{c}{distance correlation based} \\                                                                                                                                   \cmidrule(lr){2-6}\cmidrule(lr){7-11}
                       & \multicolumn{1}{c}{$\rho = 0$} & \multicolumn{1}{c}{$\rho = 0.05$} & \multicolumn{1}{c}{$\rho = 0.10$} & \multicolumn{1}{c}{$\rho = 0.15$} & \multicolumn{1}{c}{$\rho=0.20$} & \multicolumn{1}{c}{$\rho = 0$} & \multicolumn{1}{c}{$\rho = 0.05$} & \multicolumn{1}{c}{$\rho = 0.10$} & \multicolumn{1}{c}{$\rho = 0.15$} & \multicolumn{1}{c}{$\rho=0.20$} \\ \hline
$m = 1$                & \multicolumn{1}{c}{0.047}      & \multicolumn{1}{c}{0.080}         & \multicolumn{1}{c}{0.436}         & \multicolumn{1}{c}{0.892}         & 0.994                            & \multicolumn{1}{c}{0.050}      & \multicolumn{1}{c}{0.049}         & \multicolumn{1}{c}{0.062}         & \multicolumn{1}{c}{0.052}         & 0.070                            \\ 
$m = 2$                & \multicolumn{1}{c}{0.051}      & \multicolumn{1}{c}{0.044}         & \multicolumn{1}{c}{0.052}         & \multicolumn{1}{c}{0.057}         & 0.059                            & \multicolumn{1}{c}{0.051}      & \multicolumn{1}{c}{0.050}         & \multicolumn{1}{c}{0.048}         & \multicolumn{1}{c}{0.058}         & 0.046                            \\ 
$m = 3$                & \multicolumn{1}{c}{0.055}      & \multicolumn{1}{c}{0.056}         & \multicolumn{1}{c}{0.054}         & \multicolumn{1}{c}{0.051}         & 0.055                            & \multicolumn{1}{c}{0.051}      & \multicolumn{1}{c}{0.044}         & \multicolumn{1}{c}{0.055}         & \multicolumn{1}{c}{0.044}         & 0.043                            \\ 
$m = 5$                & \multicolumn{1}{c}{0.056}      & \multicolumn{1}{c}{0.053}         & \multicolumn{1}{c}{0.053}         & \multicolumn{1}{c}{0.052}         & 0.053                            & \multicolumn{1}{c}{0.058}      & \multicolumn{1}{c}{0.041}         & \multicolumn{1}{c}{0.037}         & \multicolumn{1}{c}{0.041}         & 0.046                            \\ 
$m = 10$               & \multicolumn{1}{c}{0.058}      & \multicolumn{1}{c}{0.054}         & \multicolumn{1}{c}{0.059}         & \multicolumn{1}{c}{0.057}         & 0.055                            & \multicolumn{1}{c}{0.044}      & \multicolumn{1}{c}{0.037}         & \multicolumn{1}{c}{0.081}         & \multicolumn{1}{c}{0.046}         & 0.056                            \\ \bottomrule
\end{tabular}}
\end{table}

\begin{table}[!ht]
\centering
\setlength{\tabcolsep}{1.2mm}{ 
\caption{Case 4, manifold transformation}
\label{tab:4.2}
\begin{tabular}{ccccccccccc}
\toprule
\multicolumn{1}{c}{} & \multicolumn{5}{c}{$\xi_n$ based}                                                                                                                                             & \multicolumn{5}{c}{distance correlation based} \\                                                                                                                                   \cmidrule(lr){2-6}\cmidrule(lr){7-11}
                       & \multicolumn{1}{c}{$\rho = 0$} & \multicolumn{1}{c}{$\rho = 0.05$} & \multicolumn{1}{c}{$\rho = 0.10$} & \multicolumn{1}{c}{$\rho = 0.15$} & \multicolumn{1}{c}{$\rho=0.20$} & \multicolumn{1}{c}{$\rho = 0$} & \multicolumn{1}{c}{$\rho = 0.05$} & \multicolumn{1}{c}{$\rho = 0.10$} & \multicolumn{1}{c}{$\rho = 0.15$} & \multicolumn{1}{c}{$\rho=0.20$} \\ \hline
$m = 1$               & \multicolumn{1}{c}{0.051}      & \multicolumn{1}{c}{0.073}         & \multicolumn{1}{c}{0.381}         & \multicolumn{1}{c}{0.864}         & 0.994                            & \multicolumn{1}{c}{0.044}      & \multicolumn{1}{c}{0.077}         & \multicolumn{1}{c}{0.170}         & \multicolumn{1}{c}{0.329}         & 0.580                            \\  
$m = 2$               & \multicolumn{1}{c}{0.055}      & \multicolumn{1}{c}{0.063}         & \multicolumn{1}{c}{0.062}         & \multicolumn{1}{c}{0.089}         & 0.123                            & \multicolumn{1}{c}{0.051}      & \multicolumn{1}{c}{0.073}         & \multicolumn{1}{c}{0.093}         & \multicolumn{1}{c}{0.114}         & 0.157                            \\ 
$m = 3$               & \multicolumn{1}{c}{0.081}      & \multicolumn{1}{c}{0.141}         & \multicolumn{1}{c}{0.166}         & \multicolumn{1}{c}{0.171}         & 0.168                            & \multicolumn{1}{c}{0.055}      & \multicolumn{1}{c}{0.056}         & \multicolumn{1}{c}{0.083}         & \multicolumn{1}{c}{0.073}         & 0.093                            \\ 
$m = 5$               & \multicolumn{1}{c}{0.079}      & \multicolumn{1}{c}{0.295}         & \multicolumn{1}{c}{0.404}         & \multicolumn{1}{c}{0.456}         & 0.474                            & \multicolumn{1}{c}{0.049}      & \multicolumn{1}{c}{0.093}         & \multicolumn{1}{c}{0.070}         & \multicolumn{1}{c}{0.095}         & 0.089                            \\  
$m = 10$              & \multicolumn{1}{c}{0.085}      & \multicolumn{1}{c}{0.462}         & \multicolumn{1}{c}{0.581}         & \multicolumn{1}{c}{0.621}         & 0.629                            & \multicolumn{1}{c}{0.042}      & \multicolumn{1}{c}{0.082}         & \multicolumn{1}{c}{0.055}         & \multicolumn{1}{c}{0.073}         & 0.067                            \\ \bottomrule
\end{tabular}}
\end{table}

\begin{table}[!ht]
\centering
\setlength{\tabcolsep}{1.2mm}{ 
\caption{Case 5, linear transformation}
\label{tab:6.1}
\begin{tabular}{ccccccccccc}
\toprule
\multicolumn{1}{c}{} & \multicolumn{5}{c}{$\xi_n$ based}                                                                                                                                             & \multicolumn{5}{c}{distance correlation based} \\                                                                                                                                   \cmidrule(lr){2-6}\cmidrule(lr){7-11}
                       & \multicolumn{1}{c}{$\rho = 0$} & \multicolumn{1}{c}{$\rho = 0.05$} & \multicolumn{1}{c}{$\rho = 0.10$} & \multicolumn{1}{c}{$\rho = 0.15$} & \multicolumn{1}{c}{$\rho=0.20$} & \multicolumn{1}{c}{$\rho = 0$} & \multicolumn{1}{c}{$\rho = 0.05$} & \multicolumn{1}{c}{$\rho = 0.10$} & \multicolumn{1}{c}{$\rho = 0.15$} & \multicolumn{1}{c}{$\rho=0.20$} \\ \hline
$m = 1$                & \multicolumn{1}{c}{0.048}      & \multicolumn{1}{c}{0.066}         & \multicolumn{1}{c}{0.272}         & \multicolumn{1}{c}{0.721}         & 0.971                            & \multicolumn{1}{c}{0.048}      & \multicolumn{1}{c}{0.068}         & \multicolumn{1}{c}{0.149}         & \multicolumn{1}{c}{0.323}         & 0.585                            \\ 
$m = 2$                & \multicolumn{1}{c}{0.047}      & \multicolumn{1}{c}{0.061}         & \multicolumn{1}{c}{0.334}         & \multicolumn{1}{c}{0.778}         & 0.955                            & \multicolumn{1}{c}{0.051}      & \multicolumn{1}{c}{0.046}         & \multicolumn{1}{c}{0.082}         & \multicolumn{1}{c}{0.106}         & 0.096                            \\ 
$m = 3$                & \multicolumn{1}{c}{0.061}      & \multicolumn{1}{c}{0.055}         & \multicolumn{1}{c}{0.097}         & \multicolumn{1}{c}{0.211}         & 0.311                            & \multicolumn{1}{c}{0.041}      & \multicolumn{1}{c}{0.050}         & \multicolumn{1}{c}{0.055}         & \multicolumn{1}{c}{0.058}         & 0.077                            \\ 
$m = 5$                & \multicolumn{1}{c}{0.052}      & \multicolumn{1}{c}{0.091}         & \multicolumn{1}{c}{0.107}         & \multicolumn{1}{c}{0.111}         & 0.106                            & \multicolumn{1}{c}{0.066}      & \multicolumn{1}{c}{0.069}         & \multicolumn{1}{c}{0.080}         & \multicolumn{1}{c}{0.067}         & 0.064                            \\ 
$m = 10$               & \multicolumn{1}{c}{0.055}      & \multicolumn{1}{c}{0.091}         & \multicolumn{1}{c}{0.121}         & \multicolumn{1}{c}{0.124}         & 0.138                            & \multicolumn{1}{c}{0.040}      & \multicolumn{1}{c}{0.037}         & \multicolumn{1}{c}{0.048}         & \multicolumn{1}{c}{0.041}         & 0.064                            \\ \bottomrule
\end{tabular}}
\end{table}

\begin{table}[!ht]
\centering
\setlength{\tabcolsep}{1.2mm}{ 
\caption{Case 5, manifold transformation}
\label{tab:6.2}
\begin{tabular}{ccccccccccc}
\toprule
\multicolumn{1}{c}{} & \multicolumn{5}{c}{$\xi_n$ based}                                                                                                                                             & \multicolumn{5}{c}{distance correlation based} \\                                                                                                                                   \cmidrule(lr){2-6}\cmidrule(lr){7-11}
                       & \multicolumn{1}{c}{$\rho = 0$} & \multicolumn{1}{c}{$\rho = 0.05$} & \multicolumn{1}{c}{$\rho = 0.10$} & \multicolumn{1}{c}{$\rho = 0.15$} & \multicolumn{1}{c}{$\rho=0.20$} & \multicolumn{1}{c}{$\rho = 0$} & \multicolumn{1}{c}{$\rho = 0.05$} & \multicolumn{1}{c}{$\rho = 0.10$} & \multicolumn{1}{c}{$\rho = 0.15$} & \multicolumn{1}{c}{$\rho=0.20$} \\ \hline
$m = 1$               & \multicolumn{1}{c}{0.047}      & \multicolumn{1}{c}{0.062}         & \multicolumn{1}{c}{0.267}         & \multicolumn{1}{c}{0.694}         & 0.956                            & \multicolumn{1}{c}{0.055}      & \multicolumn{1}{c}{0.061}         & \multicolumn{1}{c}{0.167}         & \multicolumn{1}{c}{0.348}         & 0.805                            \\  
$m = 2$               & \multicolumn{1}{c}{0.063}      & \multicolumn{1}{c}{0.056}         & \multicolumn{1}{c}{0.093}         & \multicolumn{1}{c}{0.158}         & 0.252                            & \multicolumn{1}{c}{0.047}      & \multicolumn{1}{c}{0.060}         & \multicolumn{1}{c}{0.114}         & \multicolumn{1}{c}{0.183}         & 0.303                            \\  
$m = 3$               & \multicolumn{1}{c}{0.068}      & \multicolumn{1}{c}{0.072}         & \multicolumn{1}{c}{0.070}         & \multicolumn{1}{c}{0.070}         & 0.072                            & \multicolumn{1}{c}{0.049}      & \multicolumn{1}{c}{0.054}         & \multicolumn{1}{c}{0.102}         & \multicolumn{1}{c}{0.136}         & 0.244                            \\ 
$m = 5$               & \multicolumn{1}{c}{0.070}      & \multicolumn{1}{c}{0.090}         & \multicolumn{1}{c}{0.110}         & \multicolumn{1}{c}{0.112}         & 0.110                            & \multicolumn{1}{c}{0.040}      & \multicolumn{1}{c}{0.101}         & \multicolumn{1}{c}{0.150}         & \multicolumn{1}{c}{0.130}         & 0.209                            \\ 
$m = 10$              & \multicolumn{1}{c}{0.078}      & \multicolumn{1}{c}{0.148}         & \multicolumn{1}{c}{0.186}         & \multicolumn{1}{c}{0.197}         & 0.196                            & \multicolumn{1}{c}{0.065}      & \multicolumn{1}{c}{0.038}         & \multicolumn{1}{c}{0.052}         & \multicolumn{1}{c}{0.126}         & 0.074                            \\ \bottomrule
\end{tabular}}
\end{table}


\section{Proofs}

Table \ref{tab:notation} lists all the symbols used in the following proofs.
\begin{table}[!ht]
\centering
\setlength{\tabcolsep}{3mm}{ 
\caption{Common symbols and notation}
\label{tab:notation}
\begin{tabular}{ll}
\hline
$\X$ & Random vector in $\mathbb{R}^d$ indicating the distribution of samples\\
$\X_i$,$\X_j$,$\X_k$... & i.i.d. copies of $\X$\\
$Y$ & Random vector in $\mathbb{R}$\\
$Y_i$,$Y_j$,$Y_k$... & i.i.d. copies of $Y$\\
$\mathcal{M}$ &$m$-dimensional smooth manifold in the Euclidean space $\mathbb{R}^d$\\
$m$ & Manifold dimension of $\mathcal{M}$\\
$d$ & Dimension of ambient space\\
$n$ & Number of data points\\
$\lambda$ & Lebesgue measure\\
$\|\cdot\|$ & Euclidean norm\\
$\mathcal{H}^m$ & $m$-dimensional Hausdorff measure\\
$\mu$ & Probability measure of $\X$\\
$g(\bx)$ & Density function of pushforward $\psi_{*}\mu$ at point $\bx$\\
$\mathcal{G}_n$ & Directed nearest neighbor graph (NNG) constructed on $n$ sample points $\X_1,...,\X_n$\\
$\mathcal{E}(\mathcal{G}_n)$ & Edge set of $\mathcal{G}_n$\\
$N_{total}$ & Total number of nearest pairs in $\mathcal{G}_n$\\
$N(\X_i)$ & Total number of nearest pairs in $\mathcal{G}_n$\\
$D(\X_i)$ & Out-degree of $\X_i$ in $\mathcal{G}_n$\\
$M_{total}$ & $\#\{(i,j,k) \enspace distinct: j\rightarrow i, k\rightarrow i \in \mathcal{E}(\mathcal{G}_n)\}$\\
$M(\X_i)$ & $\#\{(j,k) \enspace distinct: j\rightarrow i, k\rightarrow i \in \mathcal{E}(\mathcal{G}_n)\}$\\
$\pi$ & Tangent plane of a given point on $\mathcal{M}$\\
$U^d(\bx_1,\bx_2)$ & Union of two balls whose centers are $\bx_1$ and $\bx_2$ and radius are both $\Vert \bx_1-\bx_2 \Vert$ in $\mathbb{R}^d$\\
$B^k(\bx,r)$ & $k$-dimensional ball with center $\bx$ and radius $r$\\
$V_m$ & Volume of unit ball in $\R^m$\\
$U_m$ & Volume of $U^m(0,\rho)$, $\Vert \rho \Vert=1$\\
\hline
\end{tabular}}
\end{table}

\newpage{}

\subsection{Proof of Theorem \ref{main2}}
\begin{proof}[Proof of Theorem \ref{main2}]
Recall that we write $\mathcal{G}_n$ for the random directed nearest neighbor graph (NNG) corresponding to $n$ sample points $\X_1,...,\X_n$ and use $\mathcal{E}(\mathcal{G}_n)$ to denote the edge set of $\mathcal{G}_n$. According to Lemmas \ref{q} and \ref{o}:
\begin{align}
    \lim_{n\rightarrow\infty}&\mathrm{E}\bigg(\frac{1}{n}\#\Big\{(i,j)\enspace distinct: i\rightarrow j,j\rightarrow i\in \mathcal{E}(\mathcal{G}_n)\Big\}\bigg)=\kq_{m},\nonumber\\
    \lim_{n\rightarrow\infty}&\mathrm{E}\bigg(\frac{1}{n}\#\Big\{(i,j,k)\enspace distinct: i\rightarrow k,j\rightarrow k\in \mathcal{E}(\mathcal{G}_n)\Big\}\bigg)=\ko_{m}.\nonumber
\end{align}

Pursuing the idea in \cite{shi2021ac}, we resort to H\'{a}jek representation for calculating $\xi_n$'s asymptotic variance.
In \cite{lin2022limit}, the intermediate statistic $\widecheck{\xi}_n$ is defined as
\begin{align}
    \widecheck{\xi}_n := &\frac{6n}{n^2-1}\Big(\sum_{i=1}^n \min{\{F_{Y}(Y_i),F_{Y}(Y_{N(i)})\}}-\frac{1}{n-1}\sum_{i,j=1 \atop {i\neq j}}^n \min{\{F_{Y}(Y_i),F_{Y}(Y_j)\}}+\sum_{i=1}^n g(Y_i) \nonumber\\
    & + \frac{1}{n-1}\sum_{i,j=1 \atop {i\neq j}}^n E \Big[ \min{\{F_{Y}(Y_i),F_{Y}(Y_j)\}} \Big| \X_i,\X_j\Big]- E\bigg[\sum_{i=1}^n g(Y_i) \Big| \X_i\bigg] + \sum_{i=1}^n g_0(\X_i)\Big).\nonumber
\end{align}
where specific to the case that $Y$ is independent of $\X$, we have
\begin{align}
    F_{Y}(t) := \mathrm{P}(Y\leq t),\nonumber
\end{align}
\begin{equation}
    G_{\X}(t) := \mathrm{P}(Y \geq t |\X) = \mathrm{P}(Y \geq t)=1-F_{Y}(t), \nonumber
\end{equation}
\begin{equation}
    g(t) := \mathrm{Var}_{\X} \Big[ G_{\X}(t)\Big] =0, \nonumber
\end{equation}
\begin{equation}
    g_0(\bx) := \int\mathrm{E}\Big[ G_{\X}(t)\Big]^2\mathrm{d}F_{Y|\X=\bx}(t)=\int \big(1-F_Y(t)\big)^2 \mathrm{d} F_Y(t) = \frac{1}{3}. \nonumber
\end{equation}
Leveraging these expressions, $\widecheck{\xi}_n$ under independence then takes the form
\begin{equation}
    \widecheck{\xi}_n = \frac{6n}{n^2-1} \Big(\sum_{i=1}^n \min{\{F_{Y}(Y_i),F_{Y}(Y_{N(i)})\}}-\frac{1}{n-1}\sum_{i,j=1 \atop i\neq j}^n \min{\{F_{Y}(Y_i),F_{Y}(Y_j)\}}\Big) + C_0,\nonumber
\end{equation}
where $C_0$ is a fixed constant.

In \cite{lin2022limit}, it is proved that
\begin{equation}
    \lim_{n \to \infty} n \mathrm{Var}\big[\xi_n-\widecheck{\xi}_n\big] =0.\nonumber
\end{equation}
Thus we have
\begin{equation}
    \limsup_{n \to \infty}  \Big\vert \frac{\mathrm{Cov}(\xi_n,\xi_n - \widecheck{\xi}_n)}{\mathrm{Var}\big[\xi_n\big]}\Big\vert 
    \leq \limsup_{n \to \infty}\Big( \frac{\mathrm{Var}\big[\xi_n - \widecheck{\xi}_n\big]}{\mathrm{Var}\big[\xi_n\big]}\Big)^\frac{1}{2} 
    \leq\Big( \frac{\limsup_{n \to \infty}\mathrm{Var}\big[\xi_n - \widecheck{\xi}_n\big]}{\limsup_{n \to \infty}\mathrm{Var}\big[\xi_n\big]}\Big)^\frac{1}{2}
    = 0. \nonumber
\end{equation}
Then we have 
\begin{equation}\label{same}
    \lim_{n \to \infty} \frac{n \mathrm{Var}\big[\widecheck{\xi}_n\big]}{n \mathrm{Var}\big[\xi_n\big]}
    =\lim_{n \to \infty}\frac{\mathrm{Var}\big[\xi_n-(\xi_n-\widecheck{\xi}_n)\big]}{\mathrm{Var}\big[\xi_n\big]}
    =1.
\end{equation}
Since (\ref{same}) holds, it suffices to calculating the variance of the intermediate statistic $\widecheck{\xi}_n$, 
that is, we only have to calculate
\begin{equation}\label{var initial}
    n \mathrm{Var}\big[ \widecheck{\xi}_n\big] = \frac{36n^3}{(n^2-1)^2} 
    \mathrm{Var} \bigg[ \sum_{i=1}^n \min{\{F_{Y}(Y_i),F_{Y}(Y_{N(i)})\}}-\frac{1}{n-1}\sum_{i,j=1 \atop i\neq j}^n \min{\{F_{Y}(Y_i),F_{Y}(Y_j)\}}\bigg]. 
\end{equation}
For the sake of presentation clearness, introduce
\begin{equation}
    A_{ij}:= 6\min{\{F_{Y}(Y_i),F_{Y}(Y_j)\}}-2,\nonumber
\end{equation}
\begin{equation}
    V_i := n^{-\frac{1}{2}}\Big(\sum_{j:i\rightarrow j} A_{ij}-\frac{1}{n-1}\sum_{j:j\neq i} A_{ij}\Big),\quad {\rm and}~~~ S_n := \sum_{i=1}^n V_i.\nonumber \nonumber
\end{equation}
Then we can reformulate (\ref{var initial}) as :
\begin{equation}\label{var s}
    n \mathrm{Var}\big[ \widecheck{\xi}_n\big] =
    \Big(\frac{n^2}{n^2-1}\Big)^2 \mathrm{Var}\bigg[\sum_{i=1}^n V_i\bigg]
    =\Big(\frac{n^2}{n^2-1}\Big)^2 \mathrm{Var}\big[S_n\big].
\end{equation}
Since $Y_1,...,Y_n$ are independent and identically distributed (i.i.d.), for every $i\in\{1,...,n\}$, 
$F_{Y}(Y_i) \stackrel{i.i.d.}{\sim} \mathrm{Unif}[0,1]$. Thus, the expectation about $A_{ij}$ can be derived as
\begin{align*}
    & \mathrm{E} A_{ij}=0,~~ \mathrm{E} (A_{ij})^2=2:= \gamma_1, \quad \mathrm{E} (A_{ij}A_{ik})=\frac{4}{5}:= \gamma_2,~~{\rm and}~~ \mathrm{E} V_i=0.
\end{align*}
Get back to the calculation of $\mathrm{Var}\big[ \widecheck{\xi}_n\big]$. According to (\ref{var s}), it remains to calculate 
\begin{align}
    \mathrm{Var}\big[S_n\big] = \mathrm{E}\Big(\sum_{i=1}^n V_i\Big)^2= \sum_{i=1}^n \mathrm{E} V_i^2+\sum_{i\neq j}\mathrm{E} V_i V_j.\nonumber
\end{align}
For the first term, we have
\begin{align*}
\begin{split}
    \sum_{i=1}^n \mathrm{E} V_i^2 =&n^{-1} \sum_{i=1}^{n}\mathrm{E}\Big(\sum_{j:i\rightarrow j}A_{ij}-\frac{1}{n-1}\sum_{j:i\neq j}A_{ij}\Big)^2\\ 
    =& n^{-1}\bigg(\sum_{i=1}^n \mathrm{E}\Big(\sum_{i\rightarrow j} A_{ij}\Big)^2 +
    \sum_{i=1}^n \mathrm{E}\Big(\frac{1}{n-1}\sum_{j:i\neq j}A_{ij}\Big)^2\\
    &-\sum_{i=1}^n 2\mathrm{E}\Big(\frac{1}{n-1}(\sum_{i\rightarrow j} A_{ij})(\sum_{j:i\neq j}A_{ij})\Big)\bigg)\\
    = & \gamma_1+\Big(\frac{1}{n-1}\gamma_1+\frac{n-2}{n-1}\gamma_2\Big)-\Big(\frac{2}{n-1}\gamma_1+\frac{2(n-2)}{n-1}\gamma_2\Big)\\
    = & \Big(1-\frac{1}{n-1}\Big)\gamma_1-\Big(1-\frac{1}{n-1}\Big)\gamma_2\\
    \rightarrow & \gamma_1-\gamma_2.
\end{split}
\end{align*}

For the second term, we have
\begin{align*}
\begin{split}
    \sum_{i\neq j}\mathrm{E} V_i V_j
    =& n^{-1} \sum_{i\neq j}\mathrm{E}\Big(\sum_{k:i\rightarrow k}A_{ik}-\frac{1}{n-1}\sum_{l:i\neq l}A_{il}\Big)\Big(\sum_{m:j\rightarrow m}A_{jm}-\frac{1}{n-1}\sum_{p:j\neq p}A_{jp}\Big)\\
    =&n^{-1}\bigg(\sum_{i\neq j}\mathrm{E}\Big(\sum_{k:i\rightarrow k}A_{ik}\sum_{m:j\rightarrow m}A_{jm}\Big)
    +\sum_{i\neq j}\mathrm{E}\Big(\frac{1}{(n-1)^2}\sum_{l:i\neq l}A_{il}\sum_{p:j\neq p}A_{jp}\Big)\\
    &-\sum_{i\neq j}\mathrm{E}\Big(\frac{2}{n-1}\sum_{k:i\rightarrow k}A_{ik}\sum_{p:j\neq p}A_{jp}\Big)\bigg)\\
    =&\Big(\sum_{i\rightarrow j, j\rightarrow i}\gamma_1 + \sum_{i\rightarrow j, j\rightarrow i \atop {or i\rightarrow j, j\rightarrow k \atop or j\rightarrow i, i\rightarrow k}} \gamma_2\Big)+\Big(\frac{n}{n-1}\gamma_1+\frac{3(n-2)}{n-1}\gamma_2\Big)-\Big(\frac{n+1}{n-1}\gamma_1+\frac{6(n-2)}{n-1}\gamma_2\Big)\\
    = & -\frac{1}{n-1}\gamma_1-3\Big(1-\frac{1}{n-1}\Big)\gamma_2+\gamma_1\mathfrak{q}_{m}+\gamma_2(\mathfrak{o}_{m}+2-2\mathfrak{q}_{m})+o(1)\\
    \rightarrow &(\gamma_1-2\gamma_2)\kq_{m}+\gamma_2\ko_{m}-\gamma_2.
\end{split}
\end{align*}

In conclusion, we have
\begin{align}
    \lim_{n\rightarrow\infty}n \mathrm{Var}\left[ \widecheck{\xi}_n\right] = & \lim_{n\rightarrow\infty}\mathrm{Var}\big[S_n\big]\nonumber\\
    =& \lim_{n\rightarrow\infty}\sum_{i=1}^n \mathrm{E} V_i^2+\lim_{n\rightarrow\infty}\sum_{i\neq j}\mathrm{E} V_i V_j\notag\\
    =& \frac{2}{5}+\frac{2}{5}\kq_{m}+\frac{4}{5}\ko_{m}. \nonumber
\end{align}
This is the form of \citet[Theorem 3.1(ii)]{shi2021ac} when $\X_i$'s are sampled from an absolutely continuous distribution in $\mathbb{R}^m$.
\end{proof}



\subsection{Proof of Lemma \ref{al measure assumption}}
\begin{proof}[Proof of Lemma \ref{al measure assumption}]
Recall that $\mathcal{H}^m\mres\m$ represents the restricted $m$-dimensional Hausdorff measure on $\m$ and $\lambda$ represents the $m$-dimensional Lebesgue measure. 

To obtain the global property, i.e., $\mu$ is absolutely continuous with respect to $\mathcal{H}^m\mres\m$, it suffices to show the local property, i.e., for any point $\bx \in \m$ and the corresponding coordinate chart $(U,\psi)$, $\mu_U$ is absolutely continuous with respect to $\mathcal{H}^m\mres U$. In the following proof, we focus on a sufficiently small coordinate neighborhood $U$. Recall the definition of  absolutely continuity of $\mu$ with respect to some base measure, which states that any null set with respect to the base measure is a $\mu$-null set. With this definition, to prove Lemma \ref{al measure assumption}, it suffices to prove that any $\mathcal{H}^m$-null set $\psi^{-1}E$ is equivalent to a $\lambda$-null set $E$.

We refer to Chapter 3 of \cite{evans2018measure} for the following lemma and its proof. Assume a Lipschitz map $f:\mathbb{R}^m \rightarrow \mathbb{R}^n$, where $m\leq n$. Thus, $f$ is differentiable $\lambda$-a.e. by Rademacher's theorem (Theorem 3.2 in \cite{evans2018measure}). At any point $\by\in\mathbb{R}^m$ of differentiability we denote by $Jf(\by)$ the Jacobian of $f$. Also, we denote by $\mathcal{H}^0$ the 0-dimensinal Hausdorff measure, which is equivalent to counting measure.

\begin{lemma}\label{area formula}
    \emph{(Area formula).} We have $\by\mapsto Jf(\by)$  is Lebesgue measurable. In addition, for any Lebesgue measurable set $A\subset \mathbb{R}^m$, the map $z\mapsto f^{-1}(\{\bz\})$ is $\mathcal{H}^m$-measurable  and the following equality holds: 
    \begin{align*}
        \int_{\mathbb{R}^m}\mathcal{H}^{0}(A\cap f^{-1}(\bz))\ds \mathcal{H}^m(\bz)=\int_{A}Jf(\by)\ds \lambda(\by).
    \end{align*}
\end{lemma}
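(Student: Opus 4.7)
The plan is to follow the classical Federer--Evans--Gariepy approach: reduce to the linear case via polar decomposition, and then extend to general Lipschitz $f$ by a careful local affine approximation argument. First I would dispose of the two measurability assertions. At any point of differentiability one has $Jf(\by)=\sqrt{\det\bigl(Df(\by)^\top Df(\by)\bigr)}$; since $Df$ is an almost-everywhere pointwise limit of measurable difference quotients, its entries are measurable and so is the polynomial expression $Jf$. Measurability of $\bz\mapsto\mathcal{H}^0(A\cap f^{-1}(\bz))$ follows from inner regularity of $\lambda$ (reducing to closed $A$, for which $f^{-1}(\bz)\cap A$ is closed) combined with the standard fact that Lipschitz images of null sets are $\mathcal{H}^m$-null, together with a monotone-class upgrade.

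Next I would handle the linear model, which is the combinatorial heart of the identity. If $L:\R^m\to\R^n$ has rank less than $m$, both sides of the area formula vanish since $L(A)$ is contained in a proper affine subspace and $JL=0$. If $L$ has full rank $m$, the singular value / polar decomposition $L=O\Sigma V^\top$ (with $O$ an orthogonal $n\times m$ embedding, $\Sigma$ positive diagonal, $V$ orthogonal) gives $\mathcal{H}^m(L(E))=JL\cdot\lambda(E)$ for every measurable $E\subset\R^m$, and since $L$ is injective, $\mathcal{H}^0(A\cap L^{-1}(\bz))=\mathds{1}_{L(A)}(\bz)$, so the integral identity reduces to the preceding volume equality.

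Finally, for the general Lipschitz case, the bridge is a decomposition lemma: for each $\epsilon>0$, the set $\{\by:Jf(\by)>0\}$ is partitioned (up to a $\lambda$-null set) into countably many measurable pieces $E_k$, on each of which $f$ is bi-Lipschitz close, within distortion $1\pm\epsilon$, to its affine approximation $\by\mapsto f(\by_k)+Df(\by_k)(\by-\by_k)$ at a chosen basepoint $\by_k\in E_k$. A Morse--Sard-type argument gives $\mathcal{H}^m(f(\{Jf=0\}))=0$, so the formula holds trivially on the exceptional set. Applying the linear case to $Df(\by_k)$ piece by piece, summing, and then tracking multiplicities through the counting function $\mathcal{H}^0(A\cap f^{-1}(\bz))$, yields the area formula up to a multiplicative $(1+\epsilon)^m$ error; letting $\epsilon\to 0$ completes the proof. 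The main obstacle I anticipate is precisely this decomposition: upgrading pointwise differentiability to uniform affine approximability on each $E_k$ requires Egoroff's theorem paired with a Vitali-type covering, and the multiplicity bookkeeping (the integrand is a counting function, not $\mathcal{H}^m(f(A))$) adds a combinatorial layer where overlaps between the $E_k$'s must be handled cleanly to avoid under- or over-counting preimages.
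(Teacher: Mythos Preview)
Your outline is a correct and faithful sketch of the Evans--Gariepy proof of the area formula. However, the paper does not actually prove this lemma: it is quoted as a known result, with the sentence ``We refer to Chapter 3 of \cite{evans2018measure} for the following lemma and its proof'' immediately preceding the statement. So there is no in-paper argument to compare against; you have in effect reproduced the very proof that the paper cites but omits. For the purposes of this manuscript, a one-line reference to Evans--Gariepy (or Federer) is all that is expected.
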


Back to the proof, since $\m$ is a $C^{\infty}$ manifold, the homeomorphisms $\psi$ and $\psi^{-1}$ are $C^{\infty}$ maps, thus locally Lipschitz continuous. Applying Lemma \ref{area formula}, for any $E\subset V$ and $\psi^{-1}(E) \subset U$ which is  $\mathcal{H}^m$-measurable, we have
\begin{align}\label{null set}
    \mathcal{H}^m(\psi^{-1}(E)) = \int_{E}Jf(\by)\ds \lambda(\by).
\end{align}
Here for a smooth manifold, $Jf(\by)$ can be written specifically as $Jf(\by) = \sqrt{\mathrm{det}\thinspace g}$, where $g$ is the metric tensor of the Riemannian submanifold, i.e., $g_{ij}=\partial_{i}\psi^{-1}\partial_{j}\psi^{-1}$.
Due to the existence of $\psi^{-1}$, the differentials of $\psi$ and $\psi^{-1}$ have maximum ranks everywhere locally, which implies that for every $\by \in E$, $Jf(\by) >0$. Using Equation \eqref{null set}, one directly obtains
\begin{align*}
    \mathcal{H}^m(\psi^{-1}(E)) = 0 ~~\text{ if and only if }~~ \lambda(E) = 0,
\end{align*}
which completes the proof.
\end{proof}

\subsection{Proof of Lemma \ref{q}}
\begin{proof}[Proof of Lemma \ref{q}]
Recall that $\mu$ is the (induced) probability measure of $\X$, that is, $\mu(A) = \mathbb{P}(\X \in A)$. Let $[\X_i]^{n}_{i=1}$ be a sample comprised of $n$ independent copies of $\X$. 
Let $N(\X_i)$ denote the number of NN pairs containing $\X_i$ and $N_{total}$ denote the total number of NN pairs (double edges) in NNG; in other words,  
\begin{align*}
    N_{total}:=&\#\Big\{(i,j) \enspace distinct: i\rightarrow j, j\rightarrow i \in \mathcal{E}(\mathcal{G}_n)\Big\},\\
    N(\X_i):=&\#\Big\{j: i\rightarrow j, j\rightarrow i \in \mathcal{E}(\mathcal{G}_n)\Big\}.
\end{align*}
Observe that
\begin{equation}
    N_{total} = \sum_{i=1}^n N(\X_i);\nonumber
\end{equation}
it thus suffices to calculate the value of $\E (N(\X_i))$. Using Lemma \ref{max degree}, there exists an upper bound of the number of points, whose nearest neighbor is $\X_i$. For any point $\bx_i$, we use constant $\mathfrak{C}_d$ to denote this upper bound:
\begin{equation}
    N(\bx_i) \leq \mathfrak{C}_d.\nonumber
\end{equation}
To obtain the asymptotic expecatation of $\E (N(X_i))$, it suffices to prove that for $\mu$-a.e.  $\bx_i \in \m$,
\begin{align}
    \lim_{n \to \infty}\mathrm{E}(N(\bx_i)) = \frac{V_m}{U_m},\nonumber
\end{align}
since the Lebesgue Dominated Convergence Theorem can be used as
\begin{align}
    \lim_{n \to \infty}\mathrm{E}N(\X_i) &= \lim_{n \to \infty} \int \mathrm{E}(N(\bx_i)) \ds\mu(\bx_i)\nonumber\\ 
    &= \int \lim_{n \to \infty}\mathrm{E}(N(\bx_i)) \ds\mu(\bx_i) = \frac{V_m}{U_m}.\nonumber
\end{align}

We use $U^d(\bx_1,\bx_2)$ to denote the the union of two balls in $\mathbb{R}^d$ whose centers are $\bx_1$ and $\bx_2$ and radius are both $\Vert \bx_1-\bx_2 \Vert$. Thus, $\bx_1$ and $\bx_2$ form a nearest neighbor pair if and only if there are no other sample points in $U^d(\bx_1,\bx_2)$.

We first consider the nearest neighbor of one fixed sample point $\X_i$ and write $N(\bx_i) := N(\X_i|\X_i = \bx_i)$ for convenience. Applying the absolute continuity assumption to the specific point $\bx_i\in\m$, we can write (recalling the notation of Assumption \ref{measure assump}) 
\[
g = \ds(\psi_{*}\mu)/\ds\lambda 
\]
for the density of the pushforward, that is $\psi_{*}\mu(A)=\int_{A}g\ds\lambda$, for any $\mu$-measurable set $A \in V$. 

We assume that $\bx_i\in \mathrm{supp}(\mu)$ and $g(\psi(\bx_i)) >0$ throughout the manuscript. Observe that for a fixed $\bx_i$,
\begin{align}
    \mathrm{E}(N(\bx_i)) =& \sum_{i \neq j} \mathrm{P}\Big(\bigcup_{k \neq i,j}\big\{\X_k \notin U^d(\bx_i,\X_j)\big\}\Big)\nonumber\\
    =&(n-1)\mathrm{P}\Big(\bigcup_{k \neq i,j}\big\{\X_k \notin U^d(\bx_i,\bX_j)\big\}\Big)\nonumber\\
    =&(n-1)\int_{\mathcal{M}}\big(1-\mu(U^d(\bx_i,\bx_j))\big)^{n-2}\mathrm{d} \mu(\bx_j),\label{main one}
\end{align}
which is the expression used for the following calculation.

\begin{lemma}\label{U lemma}
    For $\mu$-a.e. $\bx \in \m$ and the corresponding chart $\psi(=\psi_{\bx})$, it holds true that 
    \begin{align*}
        \lim_{\Vert \bx_j-\bx\Vert \rightarrow 0}\frac{\mu(U^d(\bx,\bx_j))}{g(\psi(\bx))\Vert \bx_j-\bx\Vert^m U_m} =1.
    \end{align*}
\end{lemma}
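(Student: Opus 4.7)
The plan is to reduce everything to a computation in the tangent plane via a carefully chosen chart, and then invoke the Lebesgue differentiation theorem on the density $g$. Specifically, I would fix the chart adapted to $\bx$: take $\pi$ to be the tangent $m$-plane of $\m$ at $\bx$ and let $\psi$ be the orthogonal projection onto $\pi$ (identified with $\R^m$ via a Euclidean isometry sending $\bx$ to $\psi(\bx)$). Since $\m$ is $C^\infty$ and embedded in $\R^d$, this is a valid $C^\infty$ chart on a small neighborhood $U$ of $\bx$; the boundedness of the second fundamental form on $U$ delivers the quadratic approximation
\[
\|\bx' - \bx\|^2 = \|\psi(\bx') - \psi(\bx)\|^2 + O\bigl(\|\psi(\bx') - \psi(\bx)\|^4\bigr),\qquad \bx' \in \m \cap U,
\]
so that $\psi$ and $\psi^{-1}$ are asymptotically isometries at $\bx$.

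Next, by Lemma \ref{al measure assumption} combined with the Lebesgue differentiation theorem applied to $g$, for $\mu$-a.e.\ $\bx$ the image $\psi(\bx)$ is a Lebesgue point of $g$, i.e.,
\[
\frac{1}{\lambda(C_r)}\int_{C_r} g\, d\lambda \longrightarrow g(\psi(\bx))
\]
for any family $\{C_r\}$ of Borel sets shrinking nicely (in the Besicovitch sense) to $\psi(\bx)$; recall also that $g(\psi(\bx)) > 0$ is assumed throughout the manuscript. Using the chart, for $r := \|\bx - \bx_j\|$ small enough that $\m \cap U^d(\bx,\bx_j) \subset U$, one may rewrite
\[
\mu\bigl(U^d(\bx,\bx_j)\bigr) = \int_{E_r} g(\by)\, d\lambda(\by), \qquad E_r := \psi\bigl(\m \cap U^d(\bx,\bx_j)\bigr).
\]

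The crux is then a geometric sandwich for $E_r$: writing $U^m(\bm{a},\bm{b};\rho)$ for the union of the two $m$-dimensional balls of radius $\rho$ centered at $\bm{a}$ and $\bm{b}$, I aim to show that for every $\epsilon > 0$ and all sufficiently small $r$,
\[
U^m\bigl(\psi(\bx),\psi(\bx_j);(1-\epsilon)r\bigr) \subset E_r \subset U^m\bigl(\psi(\bx),\psi(\bx_j);(1+\epsilon)r\bigr).
\]
This follows from the quadratic expansion above: the ambient-norm defining inequalities $\|\bx' - \bx\| \leq r$ and $\|\bx' - \bx_j\| \leq r$ translate, for $\bx' \in \m$, into the analogous inequalities on $\psi$-images up to a relative error of order $r^2$. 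By the $m$-homogeneity of Lebesgue measure the outer and inner sets in the sandwich have measures $(1\pm\epsilon)^m r^m U_m$. Since $E_r$ contains an $m$-ball of radius proportional to $r$ about $\psi(\bx)$ and has measure comparable to $r^m$, it shrinks nicely to $\psi(\bx)$, so combining the Lebesgue-point statement with the sandwich yields $\mu(U^d(\bx,\bx_j)) = g(\psi(\bx))\, r^m U_m (1 + o(1))$, which is the claim.

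The main obstacle is the sandwich step, which must hold uniformly in the direction of $\bx_j$ relative to $\bx$: the curvature correction in the quadratic expansion has to be controlled uniformly over $\bx' \in \m$ with $\|\bx' - \bx\| \leq (1+\epsilon)r$, and one crucially uses that $\bx_j$ itself lies on $\m$, so that $\|\psi(\bx_j) - \psi(\bx)\|$ and $\|\bx_j - \bx\|$ agree to leading order and the two scaled balls of the sandwich are indeed centered correctly.
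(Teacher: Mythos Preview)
Your proposal is correct and follows essentially the same route as the paper: take $\psi$ to be orthogonal projection onto the tangent $m$-plane at $\bx$, establish the sandwich $U^m_{1-\gamma}(\psi(\bx),\psi(\bx_j)) \subset \psi\bigl(U^d(\bx,\bx_j)\cap\m\bigr) \subset U^m_{1+\gamma}(\psi(\bx),\psi(\bx_j))$ via the local near-isometry of $\psi$, and then invoke the Lebesgue differentiation theorem on $g$. The only cosmetic difference is that the paper packages the near-isometry step as an angle bound (Lemma~\ref{slope}) rather than through the second fundamental form, and your concern about uniformity in $\bx_j$ is exactly what the second part of that lemma addresses.
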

\begin{lemma}\label{B lemma}
     For $\mu$-a.e. $\bx \in \m$ and the corresponding chart $\psi(=\psi_{\bx})$, it holds true that
    \begin{align*}
        \lim_{\Vert \bx_j-\bx\Vert \rightarrow 0}\frac{\mu(B^d(\bx,r))}{g(\psi(\bx)) r^m V_m} =1.
    \end{align*}
\end{lemma}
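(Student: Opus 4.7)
My plan is to reduce the lemma to the Lebesgue differentiation theorem applied to $g$ on $V\subset\R^m$, after using the smoothness of $\m$ and of $\psi$ to control how the Euclidean ball $B^d(\bx,r)$, restricted to $\m$, pushes forward into $V$.

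The first step is localization. For $r$ sufficiently small depending on $\bx$, $B^d(\bx,r)\cap\m\subset U$, so that
\[
\mu\bigl(B^d(\bx,r)\bigr)=\mu\bigl(B^d(\bx,r)\cap\m\bigr)=\int_{E_r}g(\by)\,\ds\lambda(\by),\qquad E_r:=\psi\bigl(B^d(\bx,r)\cap\m\bigr).
\]

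The second step is the geometric estimate of $\lambda(E_r)$. Since $\m$ is $C^{\infty}$, near $\bx$ it is the graph over its tangent plane $T_\bx\m$ of a smooth map with vanishing first derivative at $\bx$, so $B^d(\bx,r)\cap\m$ differs from the flat $m$-disk of radius $r$ in $T_\bx\m$ only by an $O(r^2)$ normal perturbation. Composing with the smooth chart $\psi$ (chosen, without loss of generality, so that $\ds\psi_\bx\colon T_\bx\m\to\R^m$ is an orthogonal isometry, e.g., via normal coordinates or the inverse exponential map, which makes $Jf(\psi(\bx))=1$ in Lemma \ref{area formula}) and invoking continuity of the Jacobian $Jf(\by)=\sqrt{\det g_{ij}(\by)}$, we obtain $\lambda(E_r)=V_m r^m + o(r^m)$. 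The sets $E_r$ also shrink \emph{regularly} (bounded eccentricity) to $\psi(\bx)$, so for $\lambda$-a.e.\ $\psi(\bx)\in V$---equivalently, for $\mu$-a.e.\ $\bx\in\m$ by Lemma \ref{al measure assumption}---the Lebesgue differentiation theorem yields $\lambda(E_r)^{-1}\int_{E_r}g\,\ds\lambda\to g(\psi(\bx))$. Combining these two ingredients gives the claim.

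The main obstacle is the geometric step: the second-order bending of $\m$ and the smooth but generally non-Euclidean differential of $\psi$ distort $E_r$ from a round ball into an approximate ellipsoid, and one must verify both the correct leading-order volume and the bounded-eccentricity hypothesis required by Lebesgue differentiation. The quantitative anchor is Lemma \ref{area formula}, which converts the Hausdorff measure of $B^d(\bx,r)\cap\m$ into a Lebesgue integral of $Jf$ on $\R^m$; continuity of $Jf$ at $\psi(\bx)$ then supplies the $o(r^m)$ error, after which the differentiation argument is routine.
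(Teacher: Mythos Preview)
Your proposal is correct and follows the same two-step scheme as the paper's proof of Lemma~\ref{U lemma} (to which the paper explicitly defers for this lemma): push $B^d(\bx,r)\cap\m$ forward by a chart whose differential at $\bx$ is an isometry, obtain $\lambda(E_r)=V_m r^m+o(r^m)$ from the tangent-plane approximation, and finish with the Lebesgue differentiation theorem applied to $g$. The only cosmetic difference is that the paper takes $\psi$ to be orthogonal projection onto $T_\bx\m$ and derives the volume estimate via the elementary angle sandwich of Lemma~\ref{slope} rather than through the area formula and Jacobian continuity, but these devices are interchangeable here.
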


Lemma \ref{U lemma} informs that when fixing $\bx_i \in \m$, for every $\epsilon >0$, it is almost sure that there exists $\delta_1 >0$ such that for every $x_j:\Vert \bx_j-\bx_i\Vert < \delta_1$, we have  
\begin{align}\label{inter balls}
    \frac{\mu(U^d(\bx_i,\bx_j))}{g(\psi(\bx_i))\Vert \bx_j-\bx_i\Vert^m U_m} \in [1-\epsilon,1+\epsilon].
\end{align}
Similarly, Lemma \ref{B lemma} informs that there exists $\delta_2 >0$ such that for every $r<\delta_2$,
\begin{align}\label{ball}
    \frac{\mu(B^d(\bx_i,r))}{g(\psi(\bx_i)) r^m V_m}\in [1-\epsilon,1+\epsilon].
\end{align}
In the following part of this section, we take $0<\delta <\min\{\delta_1,\delta_2\}$, which implies that both (\ref{inter balls}) and (\ref{ball}) hold for every $\bx_j$ such that $\Vert \bx_j-\bx_i\Vert < \delta$.

Back to the expression (\ref{main one}) for $\mathrm{E}(N(\bx_i))$, we first consider the lower bound,
\begin{align}
    \mathrm{E}(N(\bx_i)) &\geq (n-1)\int_{\Vert \bx_j-\bx_i\Vert\leq\delta}\Big(1-\mu\big(U^d(\bx_i,\bx_j)\cap \mathcal{M}\big)\Big)^{n-2}\ds \mu(\bx_j)\nonumber\\
    &\geq (n-1)\int_{\Vert \bx_j-\bx_i\Vert\leq\delta}\Big(1-(1+\epsilon) g(\psi(\bx_i))\Vert \bx_j-\bx_i\Vert^m U_m\Big)^{n-2}\ds\mu(\bx_j).\label{introphi}
\end{align}
Next, we introduce a nonincreasing function
\begin{equation}\label{defphi}
    \phi_1(r) := \big(1-(1+\epsilon) g(\psi(\bx_i))r^{m}U_m\big)^{n-2}I_{\{r\leq\delta\}},
\end{equation}
where $I_{\{r\leq\delta\}}$ denotes the indicator function. Notice that $\phi_1(0) = 1$.

With this function, we can reformulate Equation \eqref{introphi} as follows:
\begin{align}
    (\ref{introphi}) &= (n-1) \int_{\mathcal{M}}\phi_1\big(\Vert \bx_j-\bx_i \Vert\big)\ds\mu(\bx_j)\nonumber \\
    & = (n-1)\mathrm{E}\Big(\phi_1\big(\Vert \bX_j-\bx_i \Vert\big)\Big)\nonumber\\
    & = (n-1)\int_{0}^{1} \mathrm{P}\Big(\phi_1\big(\Vert \bX_j-\bx_i \Vert\big) > t\Big)\mathrm{d}t\nonumber\\
    & = (n-1)\int_{0}^{1}\Big(\int_{\Vert \bx_j-\bx_i \Vert \leq \phi_1^{-1}(t)} \ds\mu(\bx_j)\Big)\mathrm{d}t.\label{half}
\end{align}
For handling \eqref{half}, we select the range of $t$ to make $\bx_j$ close enough to $\bx_i$. Applying then the approximation derived above in (\ref{ball}) yields
\begin{align}
    (\ref{half})\geq& (n-1)\int_{\phi_1(\delta)}^{1}\Big(\int_{\Vert \bx_j-\bx_i \Vert \leq \phi_1^{-1}(t)} \ds\mu(\bx_j)\Big)\mathrm{d}t\nonumber\\
    =&(n-1)\int_{\phi_1(\delta)}^{1}\mu\Big(B^d\big(\bx_i,\phi_1^{-1}(t)\big)\Big)\ds t\nonumber\\
    \geq& (n-1)\int_{\phi_1(\delta)}^{1}(1-\epsilon) V_m \big(\phi_1^{-1}(t)\big)^m g(\psi_1(\bx_i)) \mathrm{d}t.\label{fs}
\end{align}
The expression of $\phi_1^{-1}(t)$ can be solved from Equation \eqref{defphi}. Plugging it into (\ref{fs}) gives
\begin{align}
    (\ref{fs})&= (n-1)\int_{\phi_1(\delta)}^{1} \frac{1-\epsilon}{1+\epsilon} \frac{V_m}{U_m}\big(1-t^{\frac{1}{n-2}}\big)\mathrm{d}t\nonumber \\
    &\geq \frac{1-\epsilon}{1+\epsilon} \frac{V_m}{U_m} \Big(1-(n-1)\phi_1(\delta)\Big).\label{final1}
\end{align}
Consider the limit of \eqref{final1}. First let $n$ go to infinity. Since
\begin{align}
    \lim_{n \rightarrow \infty}(n-1)\phi_1(\delta) \leq \lim_{n \rightarrow \infty}(n-1)\big(1-(1+\epsilon) g(\psi(\bx_i))\delta^{m}U_m\big)^{n-2}= 0,\nonumber
\end{align}
we have
\begin{equation}
    \lim_{n \to \infty}E(N(\bx_i)) \geq \frac{1-\epsilon}{1+\epsilon}\frac{V_m}{U_m}\nonumber
\end{equation}
holds for arbitrary $\epsilon > 0$. Thus, 
\begin{equation}
    \lim_{n \to \infty}E(N(\bx_i)) \geq \frac{V_m}{U_m}.\nonumber
\end{equation}

Now we turn to the inequality in the other direction to find the upper bound. Using the fact that 
\[
e^{-x} \geq 1-x, \text{ for any }x\in [0,1], 
\]
we have
\begin{align}
    \mathrm{E}(N(\bx_i)) &\leq (n-1)\int_{\mathcal{M}}\exp\Big(-(n-2)\mu\big(U^d(\bx_i,\bx_j)\cap \mathcal{M}\big)\Big)\ds\mu(\bx_j) \nonumber\\
    &= (n-1)\int_{\Vert \bx_j-\bx_i\Vert>\delta}\exp\Big(-(n-2)\mu\big(U^d(\bx_i,\bx_j)\cap \mathcal{M}\big)\Big)\ds\mu(\bx_j)\nonumber\\
    &+(n-1)\int_{\Vert \bx_j-\bx_i\Vert\leq\delta}\exp\Big(-(n-2)\mu\big(U^d(\bx_i,\bx_j)\cap \mathcal{M}\big)\Big)\ds\mu(\bx_j)\nonumber\\
    &=: I_{n,1} +I_{n,2}.\label{upper}
\end{align}
Here the domain of integration is partitioned into two sets. We use $I_{n,1}$ to denote the integral on $\m \cap \{\bx_j:\Vert \bx_j-\bx_i\Vert>\delta\}$ and $I_{n,2}$ to denote the integral on $\m \cap \{\bx_j:\Vert \bx_j-\bx_i\Vert\leq\delta\}$. We then process these two terms in (\ref{upper}) respectively.
For term $I_{n,1}$, we have
\begin{align}
    I_{n,1}=&(n-1)\int_{\Vert \bx_j-\bx_i\Vert>\delta} \exp\Big(-(n-2)\mu\big(U^d(\bx_i,\bx_j)\cap \mathcal{M}\big)\Big)\ds\mu(\bx_j)\nonumber\\
    \leq & (n-1)  \sup\limits_{\Vert \bx_j-\bx_i\Vert\geq\delta}  \Big\{\exp\Big(-(n-2)\mu\big(U^d(\bx_i,\bx_j)\cap \mathcal{M}\big)\Big)\Big\}\int_{\Vert \bx_j-\bx_i\Vert>\delta}\ds\mu(\bx_j) \nonumber\\
    \leq & (n-1)\exp\Big(-(n-2)\inf\limits_{\Vert \bx_j-\bx_i\Vert\geq\delta}\big\{\mu\big(U^d(\bx_i,\bx_j)\cap \mathcal{M}\big)\big\}\Big)\int_{\mathcal{M}}\ds\mu(\bx_j).\label{infi}
\end{align}
Since for $\nu > 0$, $U^d(\bx_i,\bx_j) \subset U^d(\bx_i,\bx_j+\nu(\bx_j-\bx_i))$ and $\int_{\mathcal{M}}\ds\mu=1$,
\begin{align}
    (\ref{infi})\leq&~ (n-1)\exp\Big(-(n-2)\inf\limits_{\Vert \bx_j-\bx_i\Vert=\delta}\mu\big(U^d(\bx_i,\bx_j)\cap \mathcal{M}\big)\Big)\nonumber\\
    :=&~ (n-1)\exp\big(-(n-2)h(\delta)\big).\nonumber
\end{align}
Here $h(\delta)$ is a positive function of $\delta$. Thus for any $\epsilon,\delta>0$, $\lim_{n\rightarrow\infty}I_{n,1}= 0$.

For the second term, using the approximation (\ref{inter balls}), we have
\begin{align}
    I_{n,2}=&(n-1)\int_{\Vert \bx_j-\bx_i\Vert\leq\delta}\exp\Big(-(n-2)\mu\big(U^d(\bx_i,\bx_j)\cap \mathcal{M}\big)\Big)\ds\mu(\bx_j)\nonumber\\
    \leq & (n-1)\int_{\Vert \bx_j-\bx_i\Vert\leq\delta}\exp\big(-(n-2)(1-\epsilon) g(\psi(\bx_i))\Vert \bx_i-\bx_j\Vert^m U_m\big)\ds\mu(\bx_j).\label{ks}
\end{align}
Again, introduce a nonincreasing function
\begin{equation}\label{defpsi}
    \phi_2(r) := \exp\big(-(n-2)(1-\epsilon)^2 g(\psi(\bx_i))r^{m}U_m\big) I_{\{r\leq \delta\}}.\nonumber
\end{equation}
and reformulate the expression as before:
\begin{align}
    (\ref{ks})& = (n-1)\mathrm{E}\big(\phi_2(\Vert \bX_j-\bx_i \Vert)\big)\nonumber\\
    & = (n-1)\int_{0}^{1} \mathrm{P}\big(\phi_2(\Vert \bX_j-\bx_i \Vert > t)\big)\mathrm{d}t\nonumber\\
    & = (n-1)\int_{0}^{1}\Big(\int_{\Vert \bx_j-\bx_i \Vert \leq \phi_2^{-1}(t)} \ds\mu(\bx_j)\Big)\mathrm{d}t\nonumber\\
    & \leq (n-1)\int_{0}^{\phi_2(\delta)}\ds t+(n-1)\int_{\phi_2(\delta)}^{1}\mu\Big(B^d\big(\bx_i,\phi_2^{-1}(t)\big)\Big) \ds t\nonumber \\
    & \leq (n-1)\phi_2(\delta)+(n-1)\int_{\phi_2(\delta)}^{1}(1+\epsilon) V_m \big(\phi_2^{-1}(t)\big)^m \ds t.\label{hs}
\end{align}
The expression of $\phi_2^{-1}(t)$ can be solved from \eqref{defpsi}. Then the integral above can be specifically calculated as
\begin{align}
    (\ref{hs})= & (n-1)\phi_2(\delta)+\frac{n-1}{n-2}\frac{1+\epsilon}{1-\epsilon}\frac{V_m}{U_m}\Big(1-\phi_2(\delta)+\phi_2(\delta)\log\phi_2(\delta)\Big).\nonumber
\end{align}
For fixed $\epsilon>0$ and $\delta>0$, one can use the definition of $\phi_2(r)$ and get 
\begin{align}
    \lim_{n \to \infty}n\phi_2(\delta) = 0.\nonumber
\end{align} 
Then we have
\begin{align}
    \lim_{n \to \infty}\mathrm{E}(N(\bx_i)) \leq \frac{1+\epsilon}{1-\epsilon}\frac{V_m}{U_m}\nonumber
\end{align}
holds for arbitrary $\epsilon>0$. Thus,
\begin{align}
    \lim_{n \to \infty}\mathrm{E}(N(\bx_i)) \leq \frac{V_m}{U_m}.\nonumber
\end{align}

Combining the upper and lower bounds yields
\begin{align}
    \lim_{n \to \infty}\mathrm{E}(N(\bx_i)) = \frac{V_m}{U_m}.\nonumber
\end{align}

Finally, we obtain the conclusion
\begin{align}
    \lim_{n \to \infty}\frac{\mathrm{E}N_{total}}{n} &= \lim_{n \to \infty}\sum_{i=1}^n \frac{\mathrm{E}(N(\X_i))}{n}=\frac{V_m}{U_m}=\kq_{m}.\nonumber
\end{align}

This completes the proof.
\end{proof}

\subsection{Proof of Lemma \ref{o}}

\begin{proof}[Proof of Lemma \ref{o}]
Analogous to the proof of Lemma \ref{q}, we only have to consider one specific sample point $\X_i$. Let $Q_j$ denote the event that $\X_i$ is the nearest neighbor of $\X_j$, that is, $Q_j := \big\{ j\rightarrow i \in \mathcal{E}(\mathcal{G}_n) \big\}$ for all $j \neq i$ and put 
\begin{align}
    M_{total}&=\#\Big\{(i,j,k) \enspace distinct: j\rightarrow i, k\rightarrow i \in \mathcal{E}(\mathcal{G}_n)\Big\},\nonumber\\
    D(\X_i) &= \sum_{j \neq i} I_{Q_j} = \#\Big\{ j:j\rightarrow i \in \mathcal{E}(\mathcal{G}_n)\Big\},\nonumber \\
    M(\X_i) &= \#\Big\{(j,k) \enspace distinct: j\rightarrow i, k\rightarrow i \in \mathcal{E}(\mathcal{G}_n)\Big\}.\nonumber
\end{align}
Here $M(\X_i)$ is the coefficient we are interested in. Observe that
\begin{align}
    M_{total}=&\sum_{i=1}^n M(\X_i),\nonumber\\
    M(\X_i) =& D(\X_i)(D(\X_i)-1),\nonumber
\end{align}
so that we have
\begin{align}
    \mathrm{E}M(\X_i) &= \mathrm{E}D(\X_i)(D(\X_i)-1)\nonumber\\
    & =\mathrm{E}\sum_{(j,k)\enspace distinct \atop j,k\neq i} \mathrm{P}(Q_j \cap Q_k)\nonumber\\
    & =(n-1)(n-2)\mathrm{P}(Q_j \cap Q_k).\nonumber
\end{align}
We first consider a fixed sample point $\X_i = \bx_i$. Using Lemma \ref{max degree}, we have
\begin{align}
    \mathfrak{C}_{d}^2 &\geq \mathrm{E}D\big(\X_i|\X_i=\bx_i\big)\big(D\big(\X_i|\X_i=\bx_i\big)-1\big)\nonumber\\
    &=(n-1)(n-2)\mathrm{P}\big(Q_j \cap Q_k|\X_i=\bx_i\big) .\nonumber
\end{align}
Then applying Lebesgue Dominated Convergence Theorem, we have
\begin{align}
    \lim_{n \to \infty} (n-1)(n-2)\mathrm{P}(Q_j \cap Q_k)&=\lim_{n \to \infty}\int_{\mathcal{M}}(n-1)(n-2)\mathrm{P}\big(Q_j \cap Q_k|\X_i=\bx_i\big) \ds\mu(\bx_i) \nonumber\\
    &=\int_{\mathcal{M}}\lim_{n \to \infty}(n-1)(n-2)\mathrm{P}\big(Q_j \cap Q_k|\X_i=\bx_i\big) \ds\mu(\bx_i).\nonumber
\end{align}
It thus remains to prove that
\begin{align}
    \lim_{n \to \infty} n^2\mathrm{P}\big(Q_j \cap Q_k|\X_i=\bx_i\big) =\lim_{n \to \infty} (n-1)(n-2)\mathrm{P}\big(Q_j \cap Q_k|\X_i=\bx_i\big) = \ko_m,\nonumber
\end{align}
where $\bx_i$ is any point with positive density, which is held fixed in what follows.

We first introduce the notation to simplify the integral. For a fixed point $\bx \in \mathcal{M}$, let 
\begin{align}
    \Gamma_x &:= \Big\{(\bx_j,\bx_k) \in (\mathbb{R}^d)^2:\mathrm{max}\{\Vert \bx_j-\bx\Vert,\Vert \bx_k-\bx\Vert \}\leq \Vert \bx_j-\bx_k\Vert\Big\}, \nonumber\\
    & S_j := B\Big(\bx_j,\Vert \bx_j-\bx \Vert\Big)\bigcap\mathcal{M}, \quad S_k := B\Big(\bx_k,\Vert \bx_k-\bx \Vert\Big)\bigcap\mathcal{M}.\nonumber
\end{align}
Applying the above notation, we have that $\X=\bx$ is the nearest neighbor of $\X_j=\bx_j$ if and only if there are no other sample points in $S_j$. In the following part, we record $\bx_i$ as $\bx$ for notation simplicity. It then holds true that
\begin{align}
    n^2\mathrm{P}\big(Q_j \cap Q_k|\X_i=\bx\big) = \iint_{\Gamma_x} n^2 \big(1-\mu\left( S_j\cup S_k\right)\big)^{n-2}\ds\mu(\bx_j)\ds\mu(\bx_k),\nonumber
\end{align}
where we use $\ds\mu(\bx_j)$ and $\ds\mu(\bx_k)$ to denote the measure and the corresponding random variables which the integral corresponds to.
The next step is to split the region of integral that $\Gamma_{\bx} = \Gamma_{\bx,\delta}^1\cup\Gamma_{\bx,\delta}^2$. Here
\begin{align}
    \Gamma_{\bx,\delta}^1 &:= \Gamma_{\bx} \bigcap \Big\{ (\bx_1,\bx_2)\in (\mathbb{R}^d)^2:\Vert \bx_1-\bx\Vert,\Vert \bx_2-\bx\Vert\leq \delta\Big\},\nonumber\\
    \Gamma_{\bx,\delta}^2 &:=\Gamma_{\bx} \setminus\Gamma_{\bx,\delta}^1,\nonumber
\end{align}
where $\delta$ is a sufficiently small but positive real number. Specifically, $\delta=\min\{\delta_1,\delta_2,\delta_3,\delta_4,\delta_5\}$, in which $\delta_i$'s are to be defined in the proof below.
Taking this idea, we partition the integral into two parts:
\begin{align}
    n^2\mathrm{P}(Q_j \cap Q_k|\X_i=\bx)
    =&\iint_{\Gamma_{\bx,\delta}^1} n^2 \big(1-\mu\left( S_j\cup S_k\right)\big)^{n-2}\ds\mu(\bx_j)\ds\mu(\bx_k)\nonumber\\
    &+\iint_{\Gamma_{\bx,\delta}^2} n^2 \big(1-\mu\left( S_j\cup S_k\right)\big)^{n-2}\ds\mu(\bx_j)\ds\mu(\bx_k)\nonumber\\
    =:& J_{n,1}+J_{n,2}.\nonumber
\end{align}
We first prove that
\begin{equation}
    \lim_{n\rightarrow\infty}J_{n,2} = 0.\nonumber
\end{equation}

\begin{lemma}\label{B2 lemma}
    For $\mu$-a.e. $\bx \in \m$ and the corresponding chart $\psi$, it holds true that
    \begin{align*}
        \lim_{\Vert \bx_j-\bx\Vert \rightarrow 0}\frac{\mu\big(B^d(\bx_j,\Vert \bx-\bx_j\Vert))\big)}{g(\psi(\bx)) \lambda\big(B^m(\bx_j,\Vert \bx-\bx_j\Vert)\big)} =1.
    \end{align*}
\end{lemma}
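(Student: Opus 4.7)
The plan is to mirror the proof of Lemma \ref{B lemma}, but now allowing the ball's center $\bx_j$ to vary along the manifold near $\bx$. I will work in a single local chart $(U,\psi)$ around $\bx$, chosen so that $d\psi^{-1}(\psi(\bx))$ is an isometry onto the tangent plane $T_\bx \m$ (a ``normal'' chart, available since $\m$ is $C^\infty$). Under this choice, the density $g = d(\psi_*\mu)/d\lambda$ on $V = \psi(U)$ has $\psi(\bx)$ as a Lebesgue point for $\mu$-a.e.\ $\bx$ (by the Lebesgue differentiation theorem combined with Lemma \ref{al measure assumption}).

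The core geometric step is a bi-Lipschitz estimate at $\psi(\bx)$: since $\psi^{-1}$ is smooth with $d\psi^{-1}(\psi(\bx))$ an isometry, for each $\epsilon>0$ there exists $\delta_\epsilon>0$ such that
\begin{align*}
(1-\epsilon)\Vert \bv_1 - \bv_2\Vert \leq \Vert \psi^{-1}(\bv_1) - \psi^{-1}(\bv_2)\Vert \leq (1+\epsilon)\Vert \bv_1 - \bv_2\Vert
\end{align*}
for all $\bv_1,\bv_2 \in B^m(\psi(\bx),\delta_\epsilon)$. Specializing $\bv_2 = \psi(\bx_j)$ yields the set sandwich
\begin{align*}
B^m\bigl(\psi(\bx_j), r/(1+\epsilon)\bigr) \,\subset\, \psi\bigl(B^d(\bx_j, r)\cap U\bigr) \,\subset\, B^m\bigl(\psi(\bx_j), r/(1-\epsilon)\bigr),
\end{align*}
valid as soon as $r := \Vert \bx - \bx_j\Vert$ is small enough that both balls sit inside $B^m(\psi(\bx),\delta_\epsilon)$. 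This is the exact analogue of the tangent-plane approximation used implicitly in Lemma \ref{B lemma}, now with an off-center target.

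Integrating $g$ over the middle set and using the Lebesgue-point approximation of $g$ at $\psi(\bx)$ (so that $g = g(\psi(\bx))(1+o(1))$ in $L^1$-average on any nested family of balls shrinking to $\psi(\bx)$), one obtains
\begin{align*}
\mu\bigl(B^d(\bx_j, r)\bigr) = g(\psi(\bx))\, V_m r^m\bigl(1+o(1)\bigr) \text{ as } \Vert \bx - \bx_j\Vert \to 0.
\end{align*}
Since $\lambda(B^m(\bx_j, r)) = V_m r^m$, dividing gives the stated limit.

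The main obstacle is that the Lebesgue-differentiation property is an averaged statement about balls \emph{centered at} $\psi(\bx)$, while the sets $\psi(B^d(\bx_j, r) \cap U)$ we integrate over are centered at the \emph{shifted} point $\psi(\bx_j)$. This is resolved by noting that these off-center sets are contained in $B^m(\psi(\bx),(2+\epsilon)r/(1-\epsilon))$, a ball of comparable radius around $\psi(\bx)$, on which the Lebesgue-point estimate supplies an $L^1$ bound for $|g - g(\psi(\bx))|$ that dominates the relative error and lets the off-center approximation go through uniformly in $\bx_j$.
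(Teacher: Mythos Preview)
Your proposal is correct and follows essentially the same approach as the paper, which defers to the proof of Lemma \ref{U lemma}: a local chart with isometric differential at $\psi(\bx)$ (the paper uses orthogonal projection onto the tangent plane, yielding the same property), a bi-Lipschitz sandwich of $\psi(B^d(\bx_j,r)\cap\m)$ between concentric $m$-balls, and the Lebesgue differentiation theorem for $g$ at $\psi(\bx)$. Your explicit treatment of the off-center Lebesgue-point issue via containment in a centered ball of comparable radius is a detail the paper leaves implicit, but the argument is the same.
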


Lemma \ref{B2 lemma} shows that for any $\epsilon >0$, there exists $\delta_1 >0$ such that for $\mu$-a.e. $\bx \in \mathcal{M}$ and for any $\bx_j$ such that $\Vert \bx-\bx_j\Vert\leq\delta_1$,
\begin{align}
    &\frac{\mu\big(B^d(\bx_j,\Vert \bx-\bx_j\Vert))\big)}{g(\psi(\bx)) \lambda\big(B^m(\bx_j,\Vert \bx-\bx_j\Vert)\big)} \in \big[1-\epsilon,1+\epsilon\big].\nonumber
\end{align}
By definition of $\Gamma_{\bx,\delta}^2$, for any $(\bx_j,\bx_k) \in \Gamma_{\bx,\delta}^2$, there exists $\bx_l= \bx_j$ or $\bx_k$ satisfying $\Vert \bx_l -\bx \Vert > \delta$. Without loss of generality, we assume that $\bx_l=\bx_j$. For the point
\begin{equation}
    \by := \bx- \frac{\delta}{2}\frac{\bx-\bx_j}{\Vert \bx-\bx_j \Vert},\nonumber
\end{equation}
we have $\Vert \bx-\by \Vert = \delta/2$. For every $\bx^{*}\in B(\by,\Vert \bx-\by\Vert)=B(\by,\frac{\delta}{2})$,
\begin{align}
    \Vert \bx^{*}-\bx_j\Vert &\leq \Vert \bx^{*}-\by\Vert+\Vert \by-\bx_j\Vert < \frac{\delta}{2}+\Big(\Vert \bx-\bx_j\Vert-\frac{\delta}{2}\Big) = \Vert \bx-\bx_j\Vert,\nonumber
\end{align}
yielding
\begin{equation}
    B\Big(\by,\frac{\delta}{2}\Big) \subset B(\bx,\delta) \cap B(\bx_j,\Vert \bx_j-\bx\Vert).\nonumber
\end{equation}
This in turn implies
\begin{align}
    \mu\left( S_j\cup S_k\right) &\geq \mu\big(B(\bx_j,\Vert \bx_j-\bx\Vert)\cap\mathcal{M}\big)\nonumber\\
    &\geq \mu\big(B(\by,\delta/2)\cap\mathcal{M}\big)\nonumber\\
    &\geq (1-\epsilon)g(\psi(\bx))\lambda\big(B^m(0,\delta/2)\big)\nonumber\\
    &=(1-\epsilon)g(\psi(\bx))V_m\Big(\frac{\delta}{2}\Big)^m.\nonumber
\end{align}
Back to the integral $J_{n,2}$, we then have the upper bound
\begin{align}
    n^2 \big(1-\mu\left( S_j\cup S_k\right)\big)^{n-2} \leq n^2\Big(1-(1-\epsilon)g(\psi(\bx))V_m\Big(\frac{\delta}{2}\Big)^m\Big)^{n-2},\nonumber
\end{align}
which is a constant with respect to $\bx_j$ and $\bx_k$ and thus 
\begin{align}
    0 \leq \lim_{n\rightarrow\infty}J_{n,2} \leq \lim_{n\rightarrow\infty}n^2\Big(1-(1-\epsilon)g(\psi(\bx))V_m\Big(\frac{\delta}{2}\Big)^m\Big)^{n-2} = 0.\nonumber
\end{align}
This finishes the proof of the first part.

We then prove that
\begin{equation}
    \lim_{n\rightarrow\infty}J_{n,1} = \ko_{m}, \nonumber
\end{equation}
that is, 
\begin{align}
    \lim_{n\rightarrow\infty}\iint_{\Gamma_{\bx,\delta}^1} n^2 \big(1-\mu\left( S_j\cup S_k\right)\big)^{n-2}\ds\mu(\bx_j)\ds\mu(\bx_k)=\ko_{m}.\nonumber
\end{align}
Here $\delta$ is a constant we will select later. Similar to the process in the proof of Lemma \ref{q}, we will then derive the upper and lower bounds of $J_{n,1}$.

For the upper bound, first leveraging the fact that $e^{-x} \geq 1-x$ for all $x\in (0,1)$, we have
\begin{align}\label{eq:FH1}
    J_{n,1}\leq \iint_{\Gamma_{\bx,\delta}^1} n^2 \exp\Big(-(n-2)\mu\left( S_j\cup S_k\right)\Big)\ds\mu(\bx_j)\ds\mu(\bx_k).
\end{align}
We set $\bz_j=\psi(\bx_j)$, $\bz_k=\psi(\bx_k)$ for simplicity and thus, $\bx_j=\psi^{-1}(\bz_j)$, $\bx_k=\psi^{-1}(\bz_k)$.
\begin{lemma}\label{B2cup lemma}
    For $\mu$-a.e. $\bx \in \m$ and the corresponding chart $\psi$, we have
    \begin{align*}
        \lim_{\Vert \bx_j-\bx\Vert \rightarrow 0 \atop {\Vert \bx_k-\bx\Vert \rightarrow 0}}\frac{\mu\Big((B^d(\bx_j,\Vert \bx_j-\bx \Vert)\cup B^d(\bx_k,\Vert \bx_k-\bx \Vert))\Big)}{g(\psi(\bx))\lambda\big(B^m(\bz_j,\Vert \bz_j-\psi(\bx) \Vert)\cup B^m(\bz_k,\Vert \bz_k-\psi(\bx) \Vert)\big)} =1.
    \end{align*}
\end{lemma}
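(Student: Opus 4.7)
The plan is to mirror the argument used in the proof of Lemma \ref{B2 lemma}, adapting it from a single ball to a union of two balls. Write $r_j := \|\bx_j - \bx\|$ and $r_k := \|\bx_k - \bx\|$ and let $\Omega_n := \psi\bigl((B^d(\bx_j, r_j) \cup B^d(\bx_k, r_k)) \cap U\bigr) \subset V$. First, using Assumption \ref{measure assump} to pass to the chart, I would rewrite the numerator as
\[
\mu\bigl(B^d(\bx_j, r_j) \cup B^d(\bx_k, r_k)\bigr) = \int_{\Omega_n} g(\bz)\,\ds\lambda(\bz)
\]
(for $r_j, r_k$ small enough the relevant portion of each ball lies in $U$). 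Second, since $\psi(\bx)$ is a Lebesgue point of $g$ for $\mu$-a.e.\ $\bx$, and $\Omega_n$ shrinks to $\{\psi(\bx)\}$ with bounded eccentricity, I would pull $g(\psi(\bx))$ out of the integral at the cost of a $1 + o(1)$ factor:
\[
\int_{\Omega_n} g\,\ds\lambda = g(\psi(\bx))\,\lambda(\Omega_n)\,\bigl(1 + o(1)\bigr).
\]

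The heart of the proof is the remaining volume estimate
\[
\lambda(\Omega_n) = \bigl(1 + o(1)\bigr)\,\lambda\bigl(B^m(\bz_j, \|\bz_j - \psi(\bx)\|) \cup B^m(\bz_k, \|\bz_k - \psi(\bx)\|)\bigr).
\]
To establish this, I would combine two second-order approximations already appearing in the section: (a) by $C^\infty$ smoothness of $\m$, the set $B^d(\bx_l, r_l) \cap \m$ differs from $B^d(\bx_l, r_l) \cap T_\bx \m$ by a set of $\mathcal{H}^m$-measure $o(r_l^m)$, since $\m$ deviates from its tangent plane by $O(\|\bm{y} - \bx\|^2)$ along the manifold; and (b) by $C^\infty$ smoothness of $\psi$, its first-order Taylor expansion $\psi(\bm{y}) - \psi(\bx) = d\psi_\bx(\bm{y} - \bx) + O(\|\bm{y} - \bx\|^2)$ holds on $U$. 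Choosing the chart so that $d\psi_\bx$ acts as an isometry on $T_\bx \m$ (as in the orthogonal-projection chart implicit in the preceding lemmas), the image $\psi(B^d(\bx_l, r_l) \cap U)$ then agrees with the Euclidean ball $B^m(\bz_l, \|\bz_l - \psi(\bx)\|)$ up to a symmetric difference of Lebesgue measure $o(r_l^m)$ for each $l \in \{j,k\}$. Subadditivity of symmetric difference under union then yields the target estimate.

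The hard part is this last step: bounding the symmetric differences uniformly across the full balls, which can extend to distance up to $2r_l$ from $\bx$ when $\bx_l$ sits near the periphery of $B^d(\bx_l, r_l)$. Pointwise the second-order errors from both the Taylor expansion of $\psi$ and the tangent-plane approximation of $\m$ are of order $O(r^2)$, which translates into an $m$-dimensional symmetric difference of order $O(r^{m+1})$ and is therefore $o(r^m)$; some care is needed to carry this bound all the way out to the ball boundary, but it is essentially the same bookkeeping as in Lemma \ref{B2 lemma}. The passage from one ball to the union of two introduces no new difficulty, because the symmetric difference on a union is controlled by the sum of symmetric differences on the constituents, reducing the two-ball case to the single-ball case already treated.
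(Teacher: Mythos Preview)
Your proposal is correct and follows essentially the same approach as the paper, which explicitly defers the proof of this lemma to that of Lemma~\ref{U lemma}: pass to the chart via the pushforward, pull out $g(\psi(\bx))$ by Lebesgue differentiation (bounded eccentricity holds since the larger of the two balls already has volume comparable to $\max(r_j,r_k)^m$), and then compare the chart image to the Euclidean union of balls using the orthogonal-projection chart. The only cosmetic difference is that the paper packages the volume comparison as a set-containment sandwich $U^m_{1-\gamma}\subset\psi(\cdot)\subset U^m_{1+\gamma}$ obtained from the angle bound in Lemma~\ref{slope}, whereas you phrase the same fact as an $O(r^{m+1})$ symmetric-difference estimate coming from the second-order Taylor expansion; these are equivalent formulations, and your reduction of the union case to two single-ball estimates via subadditivity of symmetric difference is exactly the extra step needed.
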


Lemma \ref{B2cup lemma} implies that for $\mu$-a.e $\bx \in \Gamma_{\bx}$ and any $\epsilon >0$, there exists $\delta_3>0$ such that for any $(\bx_j,\bx_k)$ satisfying $\Vert \bx_j-\bx \Vert \leq \delta_3,\Vert \bx_k-\bx \Vert \leq \delta_3$, we have
\begin{align*}
    \frac{\mu\Big((B^d(\bx_j,\Vert \bx_j-\bx \Vert)\cup B^d(\bx_k,\Vert \bx_k-\bx \Vert))\Big)}{g(\psi(\bx))\lambda\big(B^m(\bz_j,\Vert \bz_j-\psi(\bx) \Vert)\cup B^m(\bz_k,\Vert \bz_k-\psi(\bx) \Vert)\big)}\in [1-\epsilon,1+\epsilon].
\end{align*}

Plugging them into the righthand side of \eqref{eq:FH1}, we obtain
\begin{align}
    &\iint_{\Gamma_{\bx,\delta}^1} n^2 \exp\Big(-(n-2)\mu\left( S_j\cup S_k\right)\Big)\ds\mu(\bx_j)\ds\mu(\bx_k)\nonumber\\
    \leq& \iint_{\Gamma_{\bx,\delta}^1}\!\!n^2\exp\bigg(-(n-2)(1-\epsilon)g(\psi(\bx))\lambda\Big(B^m(\bz_j,\Vert \bz_j-\psi(\bx) \Vert)\cup B^m(\bz_k,\Vert \bz_k-\psi(\bx) \Vert)\Big)\bigg)\nonumber\\
    &\qquad \qquad \qquad \qquad \qquad \qquad \qquad \qquad \qquad \qquad \qquad \qquad \qquad \qquad \qquad \qquad \qquad\ds\mu(\bx_j)\ds\mu(\bx_k).\nonumber\\
    =&\iint_{\psi(\Gamma_{\bx,\delta}^1)}\!\!n^2\exp\bigg(-(n-2)(1-\epsilon)g(\psi(\bx))\lambda\Big(B^m(\bz_j,\Vert \bz_j-\psi(\bx) \Vert)\cup B^m(\bz_k,\Vert \bz_k-\psi(\bx) \Vert)\Big)\bigg)\nonumber\\
    &\qquad \qquad \qquad \qquad \qquad \qquad \qquad \qquad \qquad \qquad \qquad \qquad \qquad \qquad \qquad
    g(\bz_j)g(\bz_k) \ds\lambda(\bz_j)\ds\lambda(\bz_k).\label{trans}
\end{align}

\begin{lemma}\label{newLDT}
Assume all the conditions of Lebesgue Differential Theorem hold, and $f$, $g$ are integrable functions with respect to Lebesgue measure $\lambda$. We then have
\begin{align*}
    \lim_{U\rightarrow x, U\in\mathcal{V}}\frac{1}{\vert U\vert}\int_{U}f g\ds\lambda=f(\bx)\lim_{U\rightarrow x, U\in\mathcal{V}}\frac{1}{\vert U\vert}\int_{U} g\ds\lambda.
\end{align*}
\end{lemma}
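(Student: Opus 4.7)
The plan is to prove this lemma by the standard additive decomposition
\[
\frac{1}{\vert U\vert}\int_U fg \ds\lambda = f(\bx)\cdot \frac{1}{\vert U\vert}\int_U g\ds\lambda + \frac{1}{\vert U\vert}\int_U (f-f(\bx))g\ds\lambda,
\]
so that the claimed identity reduces to showing the residual average
\[
R(U):=\frac{1}{\vert U\vert}\int_U (f(\by)-f(\bx))g(\by)\ds\lambda(\by)
\]
vanishes as $U\to \bx$ along the Vitali family $\mathcal V$.

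The first key step I would carry out is to invoke the standing hypothesis that the conditions of the Lebesgue Differentiation Theorem (LDT) hold at $\bx$, which in particular places $\bx$ in the Lebesgue set of $f$ and yields
\[
\frac{1}{\vert U\vert}\int_U \vert f(\by)-f(\bx)\vert\ds\lambda(\by)\longrightarrow 0.
\]
The second step is to use local essential boundedness of $g$ near $\bx$. In the intended application, the function $g$ is a density of a pushforward measure on a coordinate chart of a smooth manifold (see the proofs of Lemmas \ref{q} and \ref{o}), and can therefore be taken to satisfy $g\leq M$ almost everywhere on $B(\bx,r)$ for some $r>0$ and $M<\infty$. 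For every $U\in\mathcal V$ with $U\subset B(\bx,r)$, the crude bound
\[
\vert R(U)\vert \leq M\cdot \frac{1}{\vert U\vert}\int_U \vert f(\by)-f(\bx)\vert\ds\lambda(\by)
\]
then forces $R(U)\to 0$, and substituting back into the decomposition closes the argument.

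The principal obstacle I expect is making the local boundedness of $g$ precise under the bare ``integrability'' hypothesis in the statement; in full generality the product of two merely locally integrable functions need not itself be locally integrable, so some regularity beyond integrability alone is unavoidable. An alternative route that avoids a pointwise bound on $g$ is to combine H\"older's inequality with the $L^p$ strengthening of LDT, namely $\frac{1}{\vert U\vert}\int_U \vert f-f(\bx)\vert^p\ds\lambda\to 0$ for $f\in L^p_{\mathrm{loc}}$; this handles any locally $L^q$ density $g$ with $1/p+1/q=1$, which is more than sufficient for the applications of this lemma in the subsequent proofs.
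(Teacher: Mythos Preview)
Your argument is correct for the intended application, but it differs from the paper's own route. The paper's proof is a two-line appeal to the Lebesgue Differentiation Theorem applied \emph{separately} to the product $fg$ and to $g$: it simply writes
\[
\lim_{U\to\bx}\frac{1}{\vert U\vert}\int_U fg\,\ds\lambda
\;=\; f(\bx)g(\bx)
\;=\; f(\bx)\lim_{U\to\bx}\frac{1}{\vert U\vert}\int_U g\,\ds\lambda,
\]
with the first equality being LDT for $fg$ and the second LDT for $g$. Your additive decomposition instead isolates the cross term $\frac{1}{\vert U\vert}\int_U (f-f(\bx))g\,\ds\lambda$ and kills it via the Lebesgue-set property of $f$ together with local control on $g$. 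The paper's approach is shorter and more symmetric, but tacitly assumes that $fg$ is locally integrable and that $\bx$ is a Lebesgue point of $fg$; your approach trades that implicit assumption for an explicit local boundedness (or $L^q$) condition on $g$. You are right that, as literally stated, the lemma's bare integrability hypotheses are not enough for either argument---the paper's proof has the same gap you flagged, just hidden inside the direct application of LDT to $fg$. In the context where the lemma is used (with $g$ a smooth pushforward density on a chart), both routes go through without difficulty.
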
 

Using Lemma \ref{newLDT}, for fixed $\epsilon>0$, there exists $\delta_4 >0$ such that for every $\delta<\delta_4$, we have
\begin{align}
    (\ref{trans})\leq (1+\epsilon)n^2 g(\psi(\bx))^2 &\iint_{\psi(\Gamma_{\bx,\delta}^1)}\!\!\exp\bigg(-(n-2)(1-\epsilon)g(\psi(\bx))\nonumber\\
    & \lambda\Big(B^m(\bz_j,\Vert \bz_j-\psi(\bx) \Vert)\cup B^m(\bz_k,\Vert \bz_k-\psi(\bx) \Vert)\Big)\bigg) \ds\lambda(\bz_j)\ds\lambda(\bz_k).\label{ls}
\end{align}

\begin{lemma}\label{area lemma}
    For $\mu$-a.e. $\bx \in \m$ and the corresponding chart $\psi$, it holds true that
    \begin{align*}
        \lim_{\delta \rightarrow 0}\frac{\lambda(\psi(\Gamma_{\bx,\delta}^1))}{\lambda(\Gamma_{m,\delta})} =1.
    \end{align*}
\end{lemma}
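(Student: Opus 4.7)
The plan is to reduce the claim to a first-order analysis of $\psi^{-1}$ near $\psi(\bx)$ and then apply dominated convergence to rescaled indicator functions. After translation we may assume $\bx=0$ and $\psi(\bx)=0$. For $\mu$-a.e.\ $\bx\in\m$, the manifold has a well-defined tangent plane $T_{\bx}\m$ at $\bx$, and by pre-composing $\psi$ with a fixed invertible linear map on $\R^m$ — for instance taking $\psi$ to be orthogonal projection onto $T_{\bx}\m$ followed by an isometric identification $T_{\bx}\m\cong\R^m$ — we may assume the differential $L:=d\psi^{-1}_{0}:\R^m\to T_{\bx}\m\subset\R^d$ is an isometry, so in particular $|\det L|=1$ in coordinates. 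This reduction is harmless for the ratio asserted by the lemma.

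Since $\psi^{-1}$ is $C^{\infty}$, Taylor's theorem yields
\begin{align*}
\psi^{-1}(\bz)=L\bz+R(\bz),\qquad \|R(\bz)\|\le C\|\bz\|^{2},
\end{align*}
uniformly on a neighborhood of $0$. Substituting $\bz=\delta\mw$ and using $\|L\mw\|=\|\mw\|$ gives
\begin{align*}
\bigl\|\psi^{-1}(\delta\mw)\bigr\|=\delta\|\mw\|+O(\delta^{2}\|\mw\|^{2}),\qquad
\bigl\|\psi^{-1}(\delta\mw_{1})-\psi^{-1}(\delta\mw_{2})\bigr\|=\delta\|\mw_{1}-\mw_{2}\|+O\bigl(\delta^{2}(\|\mw_{1}\|^{2}+\|\mw_{2}\|^{2})\bigr).
\end{align*}
Introducing the rescaled set $\Delta_{\delta}:=\delta^{-1}\psi(\Gamma_{\bx,\delta}^{1})\subset(\R^m)^{2}$, the conditions defining $\Delta_{\delta}$ are $O(\delta)$-perturbations of the flat conditions $\max(\|\mw_{1}\|,\|\mw_{2}\|)\le\|\mw_{1}-\mw_{2}\|$ and $\|\mw_{1}\|,\|\mw_{2}\|\le1$ that define $\Gamma_{m,1}:=\Gamma_{m;2}\cap\{(\mw_{1},\mw_{2}):\|\mw_{1}\|,\|\mw_{2}\|\le1\}$.

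The final step is dominated convergence. For all sufficiently small $\delta$, $\mathbf{1}_{\Delta_{\delta}}\le\mathbf{1}_{\{\|\mw_{1}\|,\|\mw_{2}\|\le 2\}}$, and $\mathbf{1}_{\Delta_{\delta}}(\mw_{1},\mw_{2})\to\mathbf{1}_{\Gamma_{m,1}}(\mw_{1},\mw_{2})$ pointwise for every $(\mw_{1},\mw_{2})\notin\partial\Gamma_{m,1}$. Because $\partial\Gamma_{m,1}$ is the zero set of finitely many smooth, non-degenerate functions in $(\mw_{1},\mw_{2})$, it is Lebesgue-null in $(\R^m)^{2}$, so the convergence is almost everywhere and dominated convergence yields $\lambda(\Delta_{\delta})\to\lambda(\Gamma_{m,1})$. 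Since $\lambda(\psi(\Gamma_{\bx,\delta}^{1}))=\delta^{2m}\lambda(\Delta_{\delta})$ and $\lambda(\Gamma_{m,\delta})=\delta^{2m}\lambda(\Gamma_{m,1})$, the ratio tends to $1$.

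The main obstacle is handling the remainder $R$: a priori the $O(\delta)$ perturbation of the defining inequalities could concentrate near $\partial\Gamma_{m,1}$ and contribute positively in the limit. This is ruled out by the uniform boundedness of $R$ on a fixed neighborhood of $0$ together with the null-measure of $\partial\Gamma_{m,1}$; once these two facts are in hand, the remainder of the argument is routine.
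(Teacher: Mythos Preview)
Your argument is correct. The paper's own proof is a one-line deferral to the technique of Lemma~\ref{U lemma}: choose $\psi$ to be orthogonal projection onto the tangent plane, invoke the geometric ``slope'' estimate (Lemma~\ref{slope}) to get approximate preservation of all pairwise distances, and conclude via sandwich inclusions of $\psi(\Gamma_{\bx,\delta}^1)$ between $(1\pm\gamma)$-perturbed copies of $\Gamma_{m,\delta}$. Your route is the analytic counterpart of the same idea: you encode the first-order isometry of the chart through the Taylor remainder $R(\bz)=O(\|\bz\|^2)$, rescale by $\delta$, and pass to the limit by dominated convergence on the indicator of $\Delta_\delta$, using that $\partial\Gamma_{m,1}$ is Lebesgue-null. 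The underlying mechanism is identical; your packaging is a bit more self-contained and makes the null-boundary step explicit, whereas the paper's sandwich inclusions absorb that step into the parameter $\gamma\to 0$. One small remark: your phrase ``pre-composing $\psi$ with a fixed invertible linear map \ldots\ is harmless for the ratio'' is not literally true for an arbitrary linear map (the ratio would pick up a Jacobian factor $|\det A|^2$); what you actually use, and what the paper also does, is simply to \emph{take} $\psi$ to be orthogonal projection followed by an isometric identification, for which $d\psi^{-1}_0$ is already an isometry and no correction is needed.
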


Considering Lemma \ref{area lemma} and the uniform property of Lebesgue measure, we can just do a translation and make the origin point contained in the domain of integration so that
\begin{align}
    (\ref{ls})\leq (1+\epsilon)^2 n^2 g(\psi(\bx))^2 \iint_{\Gamma_{m,\delta}}\!\!\exp\Big(-(n-2)&(1-\epsilon) g(\psi(\bx))\nonumber\\
    &\lambda\big(B^m(\bz_j,\Vert \bz_j \Vert)\cup B^m(\bz_k,\Vert \bz_k \Vert)\big) \ds\lambda(\bz_j)\ds\lambda(\bz_k),\nonumber
\end{align}
where 
\[
\Gamma_{m,\delta}:=\Gamma_m\bigcap\Big\{ (\bz_j,\bz_k)\in (\mathbb{R}^m)^2:\Vert \bz_j\Vert,\Vert \bz_k\Vert\leq \delta\Big\}. 
\]
Denote $B^m(\bz_j,\Vert \bz_j \Vert)$ and $B^m(\bz_k,\Vert \bz_k \Vert)$ to be $m$-dimensional balls in $\mathbb{R}^m$. We then introduce a translation
\begin{align}
    \bz_j^{*} := l_m \bz_j,\quad \bz_k^{*}:=l_m \bz_k.\nonumber
\end{align}
We thus have
\begin{align}
    &\lambda\Big(B^m(\bz_j^{*},\Vert \bz_j^{*} \Vert)\cup(B^m(\bz_k^{*},\Vert \bz_k^{*} \Vert)\Big) = (l_m)^m \lambda\Big(B^m(\bz_j,\Vert \bz_j \Vert)\cup(B^m(\bz_k,\Vert \bz_k \Vert)\Big).\nonumber
\end{align}
Set $l_m = \Big((n-2)(1-\epsilon)g(\psi(\bx))\Big)^{1/m}$ and the corresponding Jacobi matrix satisfies $\vert J\vert = (n-2)(1-\epsilon) g(\psi(\bx))$. As $n \rightarrow \infty$, $l_m \rightarrow \infty$ and $\frac{n}{n-2} \rightarrow 1$, and thus
\begin{align}
    \lim_{n \rightarrow \infty}J_{n,1}&\leq\lim_{n \rightarrow \infty} \frac{(1+\epsilon)n^2}{(1-\epsilon)^2(n-2)^2}  \iint_{\Gamma_{m,l_m\delta}}\!\!\exp\bigg(-\lambda\Big(B^m(\bz_j^{*},\Vert \bz_j^{*} \Vert)\cup(B^m(\bz_k^{*},\Vert \bz_k^{*} \Vert)\Big)\bigg)\mathrm{d}\lambda(\bz_j^{*})\mathrm{d}\lambda(\bz_k^{*})\nonumber\\
    &= \frac{1+\epsilon}{(1-\epsilon)^2}\iint_{\Gamma_m}\exp\Big(-\lambda\big(B^m(\bz_j^{*},\Vert \bz_j^{*} \Vert)\cup(B^m(\bz_k^{*},\Vert \bz_k^{*} \Vert)\big)\Big)\ds\lambda(\bz_j^{*})\ds\lambda(\bz_k^{*})\nonumber
\end{align}
holds for arbitrary $\epsilon > 0$. We thus obtain the upper bound.

For the lower bound, the proof is similar. Use the fact that, for any $\epsilon >0$, there exists $\sigma >0$ such that for every $x \in [0,\sigma]$, $e^{-x} \leq (1+\epsilon)(1-x)$. We have there exists $\delta_5$ such that for any $\delta < \delta_5$,
\begin{align}
    J_{n,1}\geq \iint_{\Gamma_{\bx,\delta}^1} n^2 \exp\Big(-(n-2)(1+\epsilon)\mu\left( S_j\cup S_k\right)\Big)\ds\mu(\bx_j)\ds\mu(\bx_k).\nonumber
\end{align}
We then follow the same process as above: for a sufficiently small $\delta>0$,
\begin{align}
    &\iint_{\Gamma_{\bx,\delta}^1} n^2 \exp(-(n-2)(1+\epsilon)\mu\left( S_j\cup S_k\right))\ds\mu(\bx_j)\ds\mu(\bx_k)\nonumber\\
    \geq & \iint_{\Gamma_{\bx,\delta}^1} n^2 \exp\bigg(-(n-2)(1+\epsilon)^2 g(\psi(\bx))\nonumber\\
    & \qquad \qquad \qquad \qquad  \qquad \qquad\lambda\Big(B^m(\bz_j,\Vert \bz_j-\psi(\bx) \Vert)\cup B^m(z_k,\Vert z_k-\psi(\bx) \Vert)\Big)\bigg)\ds\mu(\bx_j)\ds\mu(\bx_k)\nonumber\\
    \geq & (1-\epsilon)n^2 g(\psi(\bx))^2 \iint_{\psi(\Gamma_{\bx,\delta}^1)}\!\!\exp\bigg(-(n-2)(1+\epsilon)^2 g(\psi(\bx))\nonumber\\
    &\qquad \qquad \qquad \qquad  \lambda\Big(B^m(\bz_j,\Vert \bz_j-\psi(\bx) \Vert)\cup B^m(\bz_k,\Vert \bz_k-\psi(\bx) \Vert)\Big)\bigg) g(\bz_j)g(\bz_k) \ds\lambda(\bz_j)\ds\lambda(\bz_k)\nonumber\\
    \geq & (1-\epsilon)^2 n^2 g(\psi(\bx))^2 \iint_{\Gamma_{m,\delta}}\!\exp\bigg(-(n-2)(1+\epsilon)^2 g(\psi(\bx))\nonumber\\
    &\qquad \qquad \qquad \qquad \qquad \qquad \qquad \qquad \qquad
    \lambda\Big(B^m(\bz_j,\Vert \bz_j \Vert)\cup(B^m(\bz_k,\Vert \bz_k \Vert)\Big)\bigg)\ds\lambda(\bz_j)\ds\lambda(\bz_k).\nonumber
\end{align}
Again use a translation
\begin{align}
    \bz_j^{**} := r_m \bz_j,\quad \bz_k^{**}:= r_m \bz_k,\nonumber
\end{align}
and thus the corresponding scaling result
\begin{align}
    \lambda\Big(B^m(\bz_j^{**},\Vert \bz_j^{**} \Vert)\cup(B^m(\bz_k^{**},\Vert \bz_k^{**} \Vert)\Big) = (r_m)^m \lambda\Big(B^m(\bz_j,\Vert \bz_j \Vert)\cup(B^m(\bz_k,\Vert \bz_k \Vert)\Big).\nonumber
\end{align}
Setting $r_m := \Big((n-2)(1+\epsilon)^2 g(\psi(\bx))\Big)^{1/m}$ and noting that the corresponding Jacobi matrix satisfies $\vert J\vert = (n-2)(1+\epsilon)^2 g(\psi(\bx))$, we have
\begin{align}
    J_{n,1}\geq \frac{(1-\epsilon)n^2}{(1+\epsilon)^4(n-2)^2}  \iint_{\Gamma_{m,l_m\delta}}\!\!\exp\Big(-\lambda\big(B^m(\bz_j^{**},\Vert \bz_j^{**} \Vert)\cup(B^m(\bz_k^{**},\Vert \bz_k^{**} \Vert)\big)\Big)\ds\lambda(\bz_j^{**})\ds\lambda(\bz_k^{**}).\nonumber
\end{align}
Following the same procedure as we discussed before, as $n$ goes to infinity, we then obtain the same lower bound.

Matching the upper and lower bounds then yields
\begin{align}
    \lim_{n \to \infty}\mathrm{E}M(\X_i)=&\lim_{n \to \infty} n^2\mathrm{P}(Q_j \cap Q_k|\X_i=\bx)
    =\lim_{n \to \infty} I_{n,2}= \ko_m.\nonumber
\end{align}

Finally,
\begin{align}
     \lim_{n \to \infty}\frac{\E M_{total}}{n}=\lim_{n \to \infty}\mathrm{E}\sum_{i=1}^n \frac{M(\X_i)}{n}=\lim_{n \to \infty}\sum_{i=1}^n \frac{\mathrm{E}M(\X_i)}{n}=\ko_{m}.\nonumber
\end{align}

This completes the proof.
\end{proof}

\section{Proofs of the rest results}
\subsection{Proof of Lemma \ref{cons}(a)}
\begin{proof}[Proof of Lemma \ref{cons}(a)]
Recall the expression for $\kq_m$:
\begin{align*}
    \kq_m:=\Big\{2-I_{3/4}\Big(\frac{m+1}{2},\frac{1}{2}\Big)\Big\}^{-1},
~~~~
I_{x}(a,b):=\frac{\int_{0}^{x}t^{a-1}(1-t)^{b-1} \ds t}
                 {\int_{0}^{1}t^{a-1}(1-t)^{b-1} \ds t}.
\end{align*}
To prove the monotone of $\kq_m$ as the dimension $m$ increases, it suffices to show that $I_{x}(a,b)$ decreases as argument $a$ increases when $x\in[0,1]$ and $a,b>0$. For some $\epsilon>0$, we directly compare the values of $I_{x}(a,b)$ and $I_{x}(a+\epsilon,b)$. We have
\begin{align*}
    &\big(I_{x}(a,b)\big)^{-1} = 1+ \frac{\int_{x}^{1}t^{a-1}(1-t)^{b-1} \ds t}
                 {\int_{0}^{x}t^{a-1}(1-t)^{b-1} \ds t},\\
    &\big(I_{x}(a+\epsilon,b)\big)^{-1} = 1+ \frac{\int_{x}^{1}t^{a-1}t^{\epsilon}(1-t)^{b-1} \ds t}
                 {\int_{0}^{x}t^{a-1}t^{\epsilon}(1-t)^{b-1} \ds t}.
\end{align*}
Observe that
\begin{align*}
    \int_{x}^{1}t^{a-1}t^{\epsilon}(1-t)^{b-1} \ds t > x^{\epsilon}\int_{x}^{1}t^{a-1}(1-t)^{b-1} \ds t,\\
    \int_{0}^{x}t^{a-1}t^{\epsilon}(1-t)^{b-1} \ds t < x^{\epsilon}\int_{0}^{x}t^{a-1}(1-t)^{b-1} \ds t,
\end{align*}
which implies that $\big(I_{x}(a,b)\big)^{-1} < \big(I_{x}(a+\epsilon,b)\big)^{-1}$. Equivalently, $I_{x}(a+\epsilon,b) < I_{x}(a,b)$ for every $\epsilon > 0$.

With monotony, we have $\kq_m \in (\kq_{\infty},\kq_1]$, where $\kq_{\infty} := \lim_{m\rightarrow\infty}\kq_m$. Here, $\kq_1 = 2/3$, and for $\kq_{\infty}$, notice that for some sufficient small $\delta > 0$,
\begin{align*}
    \frac{\int_{0}^{x}t^{a-1}(1-t)^{b-1} \ds t}
                 {\int_{x}^{1}t^{a-1}(1-t)^{b-1} \ds t}
    \leq \frac{\int_{0}^{x}t^{a-1}(1-t)^{b-1} \ds t}
                 {\int_{x+\delta}^{1-\delta}t^{a-1}(1-t)^{b-1} \ds t}
    \leq \frac{x^a}{(1-x-2\delta)(x+\delta)^{a-1}\delta^{b-1}}.
\end{align*}
Thus, $\lim_{a\rightarrow\rightarrow}I_{x}(a,b)=0$, and hence $\kq_{\infty}= 1/2$.

In conclusion, $\kq_m\in(1/2,2/3]$ is strictly decreasing as $m$ increases.
\end{proof}

\subsection{Proof of Lemma \ref{cons}(b)}
\begin{proof}[Proof of Lemma \ref{cons}(b)]
Recall the expression for $\ko_m$:
\begin{align*}
    &\ko_{m}:=\iint_{\Gamma_{m;2}}\exp\Big[-\lambda\Big\{B(\mw_1,\lVert\mw_1\rVert_{})\cup B(\mw_2,\lVert\mw_2\rVert_{})\Big\}\Big]\ds(\mw_1,\mw_2),\\
    &\Gamma_{m;2}:=\Big\{(\mw_1,\mw_2)\in(\R^m)^2: \max(\lVert\mw_1\rVert_{},\lVert\mw_2\rVert_{})<\lVert\mw_1-\mw_2\rVert_{}\Big\}.
\end{align*}
Considering the symmetry of $\mw_1$ and $\mw_2$ is the integral, we have
\begin{align*}
    &\ko_{m}=2\iint_{\Gamma_{m;2}^{*}}\exp\Big[-\lambda\Big\{B(\mw_1,\lVert\mw_1\rVert_{})\cup B(\mw_2,\lVert\mw_2\rVert_{})\Big\}\Big]\ds(\mw_1,\mw_2),\\
    \text{where } &\Gamma_{m;2}^{*}:=\Gamma_{m;2}\cap\Big\{(\mw_1,\mw_2)\in(\R^m)^2: \lVert\mw_1\rVert_{}>\lVert\mw_2\rVert_{}\Big\}
    .
\end{align*}
We first prove that for any positive integer $m$, $\ko_m<2$. The expression for $\ko_m$ means that
\begin{align*}
    \ko_{m}<&2\iint_{\Gamma_{m;2}^{*}}\exp\Big[-\lambda\Big\{B(\mw_1,\lVert\mw_1\rVert_{})\Big\}\Big]\ds\mw_2\ds\mw_1\\
    =&2\int_{\R^m}\exp\Big[-\lambda\Big\{B(\mw_1,\lVert\mw_1\rVert_{})\Big\}\Big]V_{m}\lVert\mw_1\rVert_{}^m\ds\mw_1\\
    =&2\int_{0}^{\infty}\exp(-V_{m}t^{m})V_{m}t^m\cdot\big(m V_m t^{m-1}\ds t\big)\\
    =&\int_{0}^{\infty}\exp(-V_{m}t^{m})\ds \big(V_{m}^{2}t^{2m}\big)=2,
\end{align*}
in which we apply polar coordinates transformation and denote $t = \lVert\mw_1\rVert_{}$ in the transformation.

Then we consider the limit behavior and prove that $\limsup_m\ko_m\leq 1$.
The definition of $\Gamma_{m;2}^{*}$ shows that, for $(\mw_1,\mw_2)\in \Gamma_{m;2}^{*}$, we have $\mw_1\notin B(\mw_2,\lVert\mw_2\rVert_{})$ and $\lVert\mw_2\rVert_{}<\lVert\mw_1\rVert_{}$. Thus, for fixed $\mw_1$, the Lebesgue measure of $B(\mw_1,\lVert\mw_1\rVert_{})\cap B(\mw_2,\lVert\mw_2\rVert_{})$ can be bounded by the restrictions above, that is,
\begin{align*}
    \lambda\Big\{B(\mw_1,\lVert\mw_1\rVert_{})\cap B(\mw_2,\lVert\mw_2\rVert_{})\Big\}<&\lambda\Big\{B(\mw_1,\lVert\mw_1\rVert_{})\cap B(\textbf{0},\lVert\mw_1\rVert_{})\Big\}=\big(2-\frac{2}{\kq_m}\big)\lambda\Big\{B(\mw_1,\lVert\mw_1\rVert_{})\Big\}.
\end{align*}
We denote $\epsilon_m = 2-2/\kq_m$. According to Lemma \ref{cons}(a), it is known that $\lim_{m\rightarrow\infty}\epsilon_m = 0$.
Applying the estimation, we have the bound
\begin{align*}
    &\lambda\Big\{B(\mw_1,\lVert\mw_1\rVert_{})\cup B(\mw_2,\lVert\mw_2\rVert_{})\Big\}\\
    =&\lambda\Big\{B(\mw_1,\lVert\mw_1\rVert_{})\Big\}+\lambda\Big\{B(\mw_2,\lVert\mw_2\rVert_{})\Big\}-\lambda\Big\{B(\mw_1,\lVert\mw_1\rVert_{})\cap B(\mw_2,\lVert\mw_2\rVert_{})\Big\}\\
    >&(1-\epsilon_m)V_m \lVert\mw_1\rVert_{}^m +V_m \lVert\mw_2\rVert_{}^m.
\end{align*}
We then get back to the expression for $\ko_m$:
\begin{align*}
    \ko_{m}<&2\iint_{\Gamma_{m;2}^{*}}\exp\Big[-(1-\epsilon_m)V_m \lVert\mw_1\rVert_{}^m -V_m \lVert\mw_2\rVert_{}^m\Big]\ds(\mw_1,\mw_2)\\
    =&2\iint_{\Gamma_{m;2}^{*}}\Big(\exp\Big[-V_m \lVert\mw_2\rVert_{}^m\Big]\ds\mw_2\Big)\exp\Big[-(1-\epsilon_m)V_m \lVert\mw_1\rVert_{}^m\Big]\ds\mw_1\\
    <&2\int_{\R^m}\Big(\int_{\mw_2:\lVert\mw_2\rVert_{}<\lVert\mw_1\rVert_{}}\exp\Big[-V_m \lVert\mw_2\rVert_{}^m\Big]\ds\mw_2\Big)\exp\Big[-(1-\epsilon_m)V_m \lVert\mw_1\rVert_{}^m\Big]\ds\mw_1\\
    =&2\int_{\R^m}\Big(1-e^{-V_m \lVert\mw_1\rVert_{}^m} \Big)\exp\Big[-(1-\epsilon_m)V_m \lVert\mw_1\rVert_{}^m\Big]\ds\mw_1\\
    =&2\Big(\frac{1}{1-\epsilon_m}-\frac{1}{2\sqrt{1-\epsilon_m}}\Big)\rightarrow 1.
\end{align*}
The calculation follows from the polar coordinates transformation as used before. In conclusion, we have $\limsup_m\ko_m\leq 1$. Combining this with $\ko_m < 2$, we obtain $\sup_{m}\ko_m <2$.
\end{proof}

\subsection{Proof of Lemma \ref{U lemma}}
\begin{proof}[Proof of Lemma \ref{U lemma}]
We consider approximating $\mu(U^d(\bx_i,\bx_j))$ as
\begin{align*}
    \mu(U^d(\bx_i,\bx_j))=&
    \psi_{*}\mu(\psi(U^d(\bx_i,\bx_j)\cap\mathcal{M}))=\int_{\psi(U^d(\bx_i,\bx_j)\cap\mathcal{M})}g\ds\lambda(\bx_j).
\end{align*}
Using Lebesgue Differentiation Theorem (LDT), there exists $\delta_1>0$ such that for $\mu$-a.e. $\bx_i$ and every $\bx_j$ satisfying $\Vert \bx_j-\bx_i\Vert < \delta_1$, we have
\begin{align}\label{direct}
    \frac{\mu(U^d(\bx_i,\bx_j))}{g(\psi(\bx_i))\lambda(\psi(U^d(\bx_i,\bx_j)\cap\mathcal{M}))}\in \Big[1-\frac{\epsilon}{3},1+\frac{\epsilon}{3}\Big].
\end{align}
For every $\alpha > 0$, we first define 
\begin{align*}
    U^m_{\alpha}(\bx,\by) := B^m\big(\bx,\alpha\Vert \bx-\by\Vert\big)\cup B^m\big(\by,\alpha\Vert \bx-\by\Vert\big).
\end{align*}
Taking a specific form of $\psi$, for example, orthogonal projection onto the tangent plane $\pi$, we have for every $\gamma >0$ and $\bx_j$ satisfying $\Vert \bx_j-\bx_i\Vert < \delta_2$, we have
\begin{align}
    U^m_{1-\gamma}\big(\psi(\bx_i),\psi(\bx_j)\big)
    \subset \psi\big(U^d(\bx_i,\bx_j)\cap\m\big)
    \subset U^m_{1-\gamma}\big(\psi(\bx_i),\psi(\bx_j)\big).\label{shuyu}
\end{align}
In fact, considering the properties of orthogonal projection, we have for every $\bx\in U^d(\bx_i,\bx_j)\cap\mathcal{M}$,
\begin{align*}
    \Vert\psi(\bx)-\psi(\bx_i)\Vert \leq \Vert \bx-\bx_i \Vert\leq\Vert \bx_i-\bx_j \Vert.
\end{align*}
\begin{lemma}\label{slope}
(i) For any $\bx \in \m$ and any $\alpha > 0$, there exists $\delta > 0$ such that for every $\bx_i \in \m$ satisfying $\Vert \bx_i-\bx \Vert{} < \delta$, we have that the angle between the vector $\bx_i-\bx$ and its projection onto $\pi$, which is the tangent plane of $\m$ at point $\bx$, is less than $\alpha$. 

(ii) Furthermore, there exists $\delta^{*}>0$ such that for every $\bx_i,\bx_j\in\m$ satisfying $\max\{\Vert \bx_i-\bx \Vert{}, \Vert \bx_j-\bx \Vert{}\}< \delta^{*}$, we have that the angle between the vector $\bx_i-\bx_j$ and its projection onto the tangent plane $\pi$ is less than $\alpha$.
\end{lemma}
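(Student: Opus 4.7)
The plan is to exploit the standard fact that a $C^\infty$ submanifold $\m\subset\R^d$ can, in a neighborhood of any point, be written as a graph over its tangent plane. Specifically, translate so that $\bx$ is the origin, let $\pi$ be the tangent plane of $\m$ at $\bx$, and let $\pi^\perp$ be its orthogonal complement in $\R^d$. Since $\m$ is smooth, there exists an open neighborhood $W\subset\pi$ of $0$ and a smooth map $f:W\rightarrow\pi^\perp$ with $f(0)=0$ and $Df(0)=0$ such that, locally, $\m=\{u+f(u):u\in W\}$. The conditions $f(0)=0$ and $Df(0)=0$ are precisely the characterization of $\pi$ being the tangent plane of $\m$ at $\bx$.

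For part (i), any $\bx_i\in\m$ close enough to $\bx$ admits a representation $\bx_i-\bx=u+f(u)$ for some $u\in W$, with $u$ being the orthogonal projection of $\bx_i-\bx$ onto $\pi$. Since the decomposition is orthogonal,
\begin{equation*}
\sin\theta_{i} \;=\; \frac{\lVert f(u)\rVert}{\lVert u+f(u)\rVert},
\end{equation*}
where $\theta_i$ denotes the angle between $\bx_i-\bx$ and its projection $u$. From $f(0)=0$ and $Df(0)=0$, Taylor expansion yields $\lVert f(u)\rVert=o(\lVert u\rVert)$ as $u\to 0$, and since $\lVert u\rVert\le\lVert u+f(u)\rVert$, we get $\sin\theta_i\to 0$ as $\bx_i\to\bx$. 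Choosing $\delta>0$ small enough so that $\lVert f(u)\rVert\le\lVert u\rVert\sin\alpha$ whenever $\lVert\bx_i-\bx\rVert<\delta$ proves (i).

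For part (ii), given $\bx_i,\bx_j\in\m$ near $\bx$, write $\bx_i-\bx=u_i+f(u_i)$ and $\bx_j-\bx=u_j+f(u_j)$; then
\begin{equation*}
\bx_i-\bx_j=(u_i-u_j)+\bigl(f(u_i)-f(u_j)\bigr),
\end{equation*}
which is again an orthogonal decomposition in $\pi\oplus\pi^\perp$, so $u_i-u_j$ is the projection of $\bx_i-\bx_j$ onto $\pi$. By the mean value inequality,
\begin{equation*}
\lVert f(u_i)-f(u_j)\rVert \;\le\; \sup_{u\in W,\,\lVert u\rVert\le r}\lVert Df(u)\rVert\cdot\lVert u_i-u_j\rVert,
\end{equation*}
where $r$ is any upper bound on $\max\{\lVert u_i\rVert,\lVert u_j\rVert\}$. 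Continuity of $Df$ together with $Df(0)=0$ implies the supremum on the right tends to $0$ as $r\to 0$. Thus, for $\delta^{*}$ small enough so that the supremum is below $\tan\alpha$, the angle $\theta$ between $\bx_i-\bx_j$ and its projection $u_i-u_j$ satisfies $\tan\theta\le\tan\alpha$, proving (ii).

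The only mildly delicate point is making sure that the orthogonal projection $\psi$ used in \eqref{shuyu} corresponds exactly to the decomposition into $\pi$ and $\pi^\perp$ that the graph representation gives; this is automatic once $\psi$ is chosen to be the orthogonal projection onto $\pi$ as the proof of Lemma~\ref{U lemma} does. Beyond that, the argument is a routine application of the smooth graph representation together with $Df(0)=0$, so no serious obstacle is anticipated.
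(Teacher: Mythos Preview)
Your argument is correct and takes a genuinely different route from the paper's. The paper treats the angle $\theta(\bx_i-\bx)=\arctan\bigl(\lVert Q_V(\bx_i-\bx)\rVert/\lVert P_V(\bx_i-\bx)\rVert\bigr)$ as a function of $\bx_i$ on $\m$, asserts (without further detail) that the very definition of the tangent plane makes this function continuous at $\bx_i=\bx$ with value $0$, and then invokes continuity to obtain $\delta$; part (ii) is dispatched with ``obtained similarly by following the proof above.'' You instead invoke the local graph representation $\m=\{u+f(u)\}$ with $f(0)=0$, $Df(0)=0$, and then read off the angle directly: Taylor expansion gives $\lVert f(u)\rVert=o(\lVert u\rVert)$ for part (i), and the mean value inequality gives $\lVert f(u_i)-f(u_j)\rVert\le\sup_{\lVert u\rVert\le r}\lVert Df(u)\rVert\cdot\lVert u_i-u_j\rVert$ for part (ii). Your approach is more explicit and, in particular, supplies for part (ii) a concrete mechanism (smallness of $Df$ near $0$) where the paper only gestures at an analogy with part (i). The paper's approach, in turn, is slightly more conceptual and avoids writing down the graph chart, but at the cost of leaving the key step---why $\theta$ extends continuously to the diagonal---essentially unproved. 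One small remark: for the mean value inequality you implicitly need the segment $[u_i,u_j]$ to lie in $W$, which is guaranteed once $W$ is taken to be a ball; this is harmless but worth noting.
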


Applying Lemma \ref{slope}, we can select a sufficiently small $\delta_2$ such that
\begin{align*}
    \Vert \bx_i-\bx_j \Vert \leq (1+\gamma)\Vert \psi(\bx_i)-\psi(\bx_j) \Vert.
\end{align*}
This reveals that for every 
\[
\bx\in B\Big(\bx_i,\Vert \bx_i-\bx_j\Vert\Big)\bigcap\m, 
\]
it holds true that 
\[
\psi(\bx)\in B\Big(\psi(\bx_i),(1+\gamma)\Vert \psi(\bx_i)-\psi(\bx_j)\Vert\Big).
\]
The above process can be repeated for the pair $(\bx, \bx_j)$ similarly. Putting them together, we get the second ``$\subset$'' of Equation \eqref{shuyu}:
\begin{align*}
    \psi\Big(U^d(\bx_i,\bx_j)\bigcap\mathcal{M}\Big)\subset U^m_{1+\gamma}\big(\psi(\bx_i),\psi(\bx_j)\big).
\end{align*}
For the first ``$\subset$'', the proof is similar. In detail, for every 
\[
\bz\in B\Big(\psi(\bx_i),(1-\gamma)\Vert \psi(\bx_i)-\psi(\bx_j)\Vert\Big)\bigcap V, 
\]
we can select a sufficiently small $\delta_2$ such that
\begin{align*}
    \Vert \psi^{-1}(\bz)-\bx_i\Vert \leq \Big(1+\frac{\gamma}{2}\Big)\Vert \bz-\psi(\bx_i)\Vert\leq\Big(1-\frac{\gamma}{2}\Big)\Vert \psi(\bx_i)-\psi(\bx_j)\Vert\leq \Vert \bx_i-\bx_j\Vert,
\end{align*}
which implies that
\begin{align*}
    \psi^{-1}\Big(U^m_{1-\gamma}\big(\psi(\bx_i),\psi(\bx_j)\big)\Big)\subset U^d(\bx_i,\bx_j)\bigcap\mathcal{M}.
\end{align*}
Applying the mapping $\psi$ to both sides and repeating the process for $\bx$ and $\bx_j$, we obtain the first ``$\subset$'' of Equation \eqref{shuyu}:
\begin{align*}
    U^m_{1-\gamma}\big(\psi(\bx_i),\psi(\bx_j)\big)\subset\psi(U^d(\bx_i,\bx_j)\cap\mathcal{M}).
\end{align*}

With Equation \eqref{shuyu} and noticing that
\begin{align*}
    \lim_{\Vert \bx_j-\bx_i\Vert\rightarrow0}\frac{\Vert \psi(\bx_i)-\psi(\bx_j)\Vert}{\Vert \bx_i-\bx_j\Vert} = 1,
\end{align*}
one can directly obtain that
\begin{align}\label{contain}
    \lim_{\Vert \bx_j-\bx_i\Vert\rightarrow0}\frac{\lambda\big(\psi(U^d(\bx_i,\bx_j)\cap\mathcal{M})\big)}{\lambda\big(U^m(\bx_i,\bx_j)\big)}=\lim_{\Vert \bx_j-\bx_i\Vert\rightarrow0}\frac{\lambda\big(\psi(U^d(\bx_i,\bx_j)\cap\mathcal{M})\big)}{\lambda\big(U^m(\psi(\bx_i),\psi(\bx_j))\big)}=1.
\end{align}
Combining \eqref{direct} and \eqref{contain} completes the proof of Lemma \ref{U lemma}.
\end{proof}

\subsection{Proofs of Lemma \ref{B lemma}, \ref{B2 lemma}, \ref{B2cup lemma} and \ref{area lemma}}
\begin{proof}[Proofs of Lemma \ref{B lemma}, \ref{B2 lemma} and \ref{B2cup lemma}] All these lemmas share a similar proof to Lemma \ref{U lemma} that we have proved above. 
Details are hence omitted.
\end{proof}
\begin{proof}[Proof of Lemma \ref{area lemma}] We consider the pushforward $\psi^{-1}_{*}\lambda$ and Lebesgue measure $\lambda$ instead of $\mu$ and the pushforward $\psi_{*}\mu$ in this lemma. Hence, we just set $g\equiv 1$ and its proof can be regarded as a special case of the proof sketch of Lemma \ref{B2 lemma}.
\end{proof}

\subsubsection{Proof of Lemma \ref{slope}}
\begin{proof}[Proof of Lemma \ref{slope}] We denote by $\pi$ the tangent plane of $\m$ at point $\bx$ and $\pi^{\perp} $ the orthogonal complement of $\pi$ in $\R^d$. Moreover, we denote by $P_V$, $Q_V$ the orthogonal projection operators on $\pi$ and $\pi^{\perp}$, respectively.  The orthogonal decomposition of a vector $\bz$ with respect to $\pi$ can then be shown as $\bz = P_V(\bz) + Q_V(\bz)$. 

Define $\theta(\bz)$ as the angle between $\bz$ and its projection onto the tangent plane $\pi$. When $\bz=\bm{0}$, we define $\theta(\bz) = 0$. Using the notation above, one has
\begin{align*}
    \tan(\theta(\bx_i-\bx))= \frac{\Vert Q_V(\bx_i-\bx) \Vert{}}{\Vert P_V(\bx_i-\bx) \Vert{}}, ~~~~ \bx_i\neq \bx.
\end{align*}
We now consider $\theta(\bx_i-\bx)$ as a function of $\bx_i$. Since $\m$ is a smooth manifold, $Q_{V}(\bx_i-\bx)$ and $P_{V}(\bx_i-\bx)$ are both continuous functions. With the definition of the tangent plane $\pi$, we know that $\theta(\bx_i-\bx)$ is continuous at point $\bx_i=\bx$. Thus $\theta(\bx_i-\bx)$ is a continuous function on $\m$. We can select a fixed $\delta_1 >0$ so that $\theta(\bx_i-\bx)$ is uniformly continuous on $\m\cap B^d(\bx,\delta_1)$. Since $\theta(\bx-\bx)=0$, there exists  $0<\delta<\delta_1$ such that for any $\bx_i \in \m$ and $\Vert \bx_i-\bx \Vert{}<\delta$, we have $\theta(\bx_i-\bx) < \alpha$, which is the first claim.

For the second claim, we just have to modify $\theta(\bz)$ as follows: we define $\theta(\bx_i,\bx_j)$ to be the angle between the vector $\bx_i-\bx_j$ and its projection onto the tangent plane $\pi$ so that
\begin{align*}
    \tan(\theta(\bx_i,\bx_j)) = \frac{\Vert Q_V(\bx_i-\bx_j) \Vert}{\Vert P_V(\bx_i-\bx_j) \Vert},
    ~~~~ \bx_i\neq \bx_j.
\end{align*}
Additionally, define $\theta(\bx_i,\bx_j) = 0$ when $\bx_i=\bx_j$. We select $\delta_1^{*}$ to make sure that $\theta(\bx_i,\bx_j)$ exists. The second claim can then be obtained similarly by following the proof above.
\end{proof}

\subsubsection{Proof of Lemma \ref{newLDT}}
\begin{proof}[Proof of Lemma \ref{newLDT}]
It is a simple corollary to Lebesgue Differential Theorem, which can be derived by applying LDT to $f$ and $g$ respectively:
\begin{align*}
    \lim_{U\rightarrow \bx, U\in\mathcal{V}}\frac{1}{\vert U\vert}\int_{U}f g\ds\lambda
    =f(\bx)g(\bx)\lim_{U\rightarrow \bx, U\in\mathcal{V}}\frac{1}{\vert U\vert}\int_{U}\ds\lambda
    =f(\bx)\lim_{U\rightarrow \bx, U\in\mathcal{V}}\frac{1}{\vert U\vert}\int_{U} g\ds\lambda.
\end{align*}
This completes the proof.
\end{proof}

\bibliographystyle{chicago}
\bibliography{ref}

\end{document}